\newif\ifanonymous
\pgfplotsset{compat=1.18} 
\algrenewcommand\algorithmicrequire{\textbf{Input:}}
\algrenewcommand\algorithmicensure{\textbf{Output:}}
\definecolor{forestgreen}{RGB}{34,139,34}
\newtheorem{theorem}{Theorem}
\newtheorem{lemma}{Lemma}
\newtheorem{corollary}{Corollary}
\newtheorem{proposition}{Proposition}
\theoremstyle{definition}
\newtheorem{definition}{Definition}
\newtheorem{example}{Example}
\newtheorem{remark}{Remark}
\newtheorem{observation}{Observation}
\newcommand{\ind}[1]{\mathbbm{1}_{#1}} 
\newcommand{\defeq}{\vcentcolon=}
\title{Incomplete U-Statistics of Equireplicate Designs:\\Berry-Esseen Bound and Efficient Construction}
  \author{Anonymous authors}
  \author{Cesare Miglioli and Jordan Awan\\
  Department of Statistics, University of Pittsburgh}
\date{}
\begin{document}
\maketitle
\begin{abstract}
\noindent U-statistics are a fundamental class of estimators that generalize the sample mean and underpin much of nonparametric statistics. Although extensively studied in both statistics and probability, key challenges remain: their high computational cost—addressed partly through incomplete U-statistics—and their non-standard asymptotic behavior in the degenerate case, which typically requires resampling methods for hypothesis testing. This paper presents a novel perspective on U-statistics, grounded in hypergraph theory and combinatorial designs. Our approach bypasses the traditional Hoeffding decomposition, the main analytical tool in this literature but one highly sensitive to degeneracy. By characterizing the dependence structure of a U-statistic, we derive a Berry–Esseen bound valid for incomplete U-statistics of deterministic designs, yielding conditions under which Gaussian limiting distributions can be established even in degenerate cases and when the order diverges. We also introduce efficient algorithms to construct incomplete U-statistics of equireplicate designs, a subclass of deterministic designs that, in certain cases, achieve minimum variance. Beyond its theoretical contributions, our framework provides a systematic way to construct permutation-free counterparts to tests based on degenerate U-statistics, as demonstrated in experiments with kernel tests that use the Maximum Mean Discrepancy and the Hilbert–Schmidt Independence Criterion.
\end{abstract}
\textbf{Keywords:} degenerate U-statistics; dependency graph; hypergraph theory; kernel tests.

\section{Introduction}

U-statistics are a broad class of statistical estimators that extend the concept of the sample mean. Instead of simply averaging the observations, they average a symmetric, measurable function (kernel) of \(k > 1\) arguments over all \(\binom{n}{k}\) possible subsets of a sample of size \(n\). To mitigate the computational burden, \emph{Incomplete U-statistics} \citep{blom1976some} consider only a subset, referred to as a \emph{design}, of these \(\binom{n}{k}\) elements. While in principle the design can be strategically chosen to minimize the variance of the estimator, for ease of implementation, a random selection—either with or without replacement—is commonly used, despite not being the most efficient approach (see \cite{lee1990u}, ch. 4.3).

The theoretical properties of U-statistics have been extensively studied in both Statistics \citep{lee1990u} and Probability \citep{korolyuk2013theory}. In particular, \emph{non-degenerate U-statistics} have a Gaussian limiting  distribution \citep{hoeffding1948u}, while \textit{degenerate U-statistics} converge to non-standard asymptotic distributions (\cite{lee1990u}, ch. 3.2.3). For degenerate second-order U-statistics, i.e., when $k=2$, \cite{gregory1977large} showed that the limiting distribution is an infinite weighted sum of centered $\chi^2$ random variables. \cite{weber1981incomplete} extends the analysis to second-order degenerate incomplete U-statistics, demonstrating that both normal and weighted chi-square limiting behaviors can occur depending on the choice of the design. The latter non-standard distribution is often considered intractable for practical purposes; this is because the weights depend on the eigenvalues of a kernel-specific integral equation and, except for a few ``nice'' choices of the kernel function (\cite{lee1990u}, ch. 6.2), this equation cannot be solved analytically.

On the applications side, U-statistics have found widespread use across various fields. These range from classical statistical estimation and inference tasks: e.g., in nonparametric hypothesis testing \citep{bergsma2014consistent, yao2018testing}, robust statistics \citep{joly2016robust, MinskerWei2020}, bootstrap and resampling methods \citep{leucht2013dependent, bastian2024testing}, to modern machine learning applications: e.g., in empirical risk minimization \citep{clemenccon2008ranking, chen2023distributed}, supervised learning ensembles \citep{mentch2016quantifying, peng2022rates} and kernel methods \citep{gretton2005measuring,gretton2012kernel, liu2016kernelized}. 
Second-order U-statistics are the most common and widely used in practice. This class includes estimators of  variance, covariance, Kendall’s Tau, Spearman’s Rho, Wilcoxon statistics, Cramer-Von Mises statistics. Additionally, they encompass novel estimators of the Maximum Mean Discrepancy (MMD) and the Kernel Stein Discrepancy, which are kernel methods designed for two-sample testing \citep{gretton2012kernel} and goodness-of-fit testing \citep{liu2016kernelized}, respectively (see \cite{schrab2022efficient} for a discussion). Higher-order U-statistics are also relevant in applications, as they include estimators of dependence measures such as distance covariance \citep{szekely2007measuring}, sign covariance \citep{bergsma2014consistent}, and the Hilbert–Schmidt Independence Criterion (HSIC) \citep{gretton2005measuring}.

Given the extensive applicability of U-statistics, there has been growing interest in studying incomplete U-statistics, both from a statistical perspective as in \cite{chen2019randomized,sturma2024testing, leung2026berry} and also from an applied machine learning perspective as in \cite{papa2015sgd,clemenccon2016scaling, schrab2022efficient}, but there also remain significant challenges.  
 On the one hand, selecting a design that minimizes variance is highly desirable, but it is a notoriously difficult combinatorial problem (\cite{lee1990u}, ch. 4.3.2). Due to this challenge,  researchers generally resort to random designs \citep{papa2015sgd,clemenccon2016scaling, chen2019randomized,leung2026berry}, which are easier to implement and analyze, but come at the cost of a suboptimal variance. On the other hand, the problem of determining the limiting distribution remains open. For instance, in the case of second-order degenerate incomplete U-statistics, no study has yet characterized how the limiting distribution transitions between normal and weighted chi-square laws, depending on the growth rate of the size of a minimum variance design. Addressing this gap is crucial for gaining a deeper understanding of the statistical properties of incomplete U-statistics and ensuring the validity of inferential procedures. Moreover, these theoretical insights would be essential for guiding practical applications, providing clear criteria for when a Gaussian approximation can be used or when resampling methods are required to approximate the limiting distribution.

In this paper, we present an efficient design construction and its asymptotic framework, to address the computational and statistical challenges posed by incomplete U-statistics. Our main contributions can be structured into four distinct points: 

\begin{enumerate}[label=(\alph*)]
    \item We show that the \emph{dependency graph} \citep{baldi1989normal} of an incomplete U-statistic is the line graph of the design hypergraph.
    \item We derive a new Berry–Esseen bound for incomplete U-statistics of deterministic designs, valid under the weakest assumptions known in the literature, using \emph{Stein's method} \citep{chen2004normal}. Moreover, we show that when both the maximum degree of the design hypergraph and the order $k$ of the U-statistic are $O(\log^q(n)$ for some $q>0$, the bound converges to zero, ensuring asymptotic normality, even in the degenerate case and when the order diverges. Interestingly, these results are obtained without relying on the \emph{Hoeffding decomposition} \citep{Hoeffding1961}.
    \item We develop efficient algorithms for constructing incomplete U-statistics based on equireplicate designs (\cite{lee1990u}, ch. 4.3.2), a subclass of deterministic designs that tightly controls the maximum degree of the design hypergraph. The algorithms run in linear time in the design size and achieve minimum variance when $k=2$.
    Our construction for $k>2$ may be of independent interest to the combinatorial design and hypergraph communities.
\item We validate our theoretical results through numerical experiments on kernel-based test statistics, specifically the MMD and the HSIC. In a real-data application, we showcase a permutation-free variant of the MMD test that achieves substantial computational gains while matching the power and type I error control of its permutation-based counterpart, built on the same incomplete U-statistic.
\end{enumerate}

\noindent{\bf Organization.} 
Section 2 reviews background material and sets the notation for the paper. Section 3 derives a Berry–Esseen bound for incomplete U-statistics of deterministic designs and establishes conditions for Gaussian limiting distributions, even in the degenerate case and when the order diverges. Section 4 presents efficient algorithms for constructing equireplicate designs. Section 5 validates our theoretical results through extensive simulations on kernel-based test statistics and a real-data application presenting a permutation-free variant of the MMD test. Section 6 concludes with a summary and directions for future research. All proofs and technical details corresponding to each section of the main paper are provided in the supplemental materials at the end, where sections are labeled with an “S”. \ifanonymous The R code that reproduces all numerical experiments in this paper is also available as supplemental material. \else The R code for this paper can be found at \url{https://github.com/CaesarXVII/IUstat_equireplicate_designs}.\fi

\smallskip

\noindent{\bf Related Work.} Existing works addressing convergence rates to limiting distributions of incomplete U-statistics that are most relevant to our setting and contributions include: \cite{rinott1997coupling}, which examine a Markov-type dependence framework with applications to incomplete U-statistics; \cite{chen2019randomized}, which consider randomized incomplete U-statistics in high-dimensional settings; \cite{sturma2024testing}, which extend \cite{chen2019randomized} to a \emph{mixed degenerate} setting with applications to testing a null hypothesis defined by equality and inequality constraints; \cite{kim2024dimension}, which propose a new class of incomplete U-statistics that enable valid inference regardless of how the dimension scales with the sample size; \cite{shao2025u} that develop higher-order approximations for the sampling distribution of studentized non-degenerate incomplete U-statistics; and \cite{leung2026berry}, which analyzes incomplete U-statistics of random designs—generated via Bernoulli sampling—across different regimes relating the sample size $n$ and the expected design size. Among these prior works, none has provided a comprehensive framework encompassing: (i) finite-sample results on the distance to normality that account for both degenerate and non-degenerate cases; (ii) asymptotic analyses allowing the order $k$ to grow with $n$ and the design size to increase superlinearly in $n$; and (iii) efficient algorithms for constructing minimum-variance designs when $k=2$. Additional related work is discussed in \ref{supp:sec1}.

\section{Background and Notation}\label{sec:back_not}

In this section, we introduce the necessary background and notation used throughout the paper. We strive for clarity and consistency in our notation, aligning it with the standard conventions in each of the domains we explore, namely U-statistics, combinatorial designs, and hypergraph theory. In particular, our notation and terminology for U-statistics closely follow \cite{lee1990u}, for combinatorial designs we refer to \cite{ColbournDinitz2006} and \cite{wallis2016introduction}, and for hypergraphs we primarily follow \cite{bretto2013hypergraph}.

\subsection{Incomplete U-statistics}\label{subsec:ustat}

Let $X_1, \ldots, X_n$ be i.i.d. random variables taking values in a measurable space $(T, \mathcal{F})$ with common distribution $P$. Consider the couple $(V,\mathcal{B}_k)$, where $V = \{1, \ldots, n \}$ is the index set of the random variables and $\mathcal{B}_k = \{ S \subseteq V, k \in V \mid \vert S \vert = k  \}$ the collection of all subsets of $V$ of size $k$. Then, an incomplete U-statistic of order $k$ can be written as:

\begin{equation}
U_{n,D}^{(k)} = \frac{1}{|D|} \sum_{ \{i_1, \ldots, i_k \} \in D} h( X_{i_1}, \ldots, X_{i_k}),    \label{eq:IUk}
\end{equation}
where $h: T^k \rightarrow \mathbb{R}$ is a fixed measurable kernel function that is symmetric in its arguments, i.e., $h\left(x_1, \ldots, x_k\right)=h\left(x_{i_1}, \ldots, x_{i_k}\right)$ for every permutation $i_1, \ldots, i_k$ of $\{1, \ldots, k\}$ and $D\subseteq \mathcal{B}_k$ represents the \emph{design} of the incomplete U-statistics of size $|D|$. If $D = \mathcal{B}_k$, we obtain the complete $k$th-order U-statistic with $|D| = \binom{n}{k}$. Likewise, in the combinatorial designs literature \citep{ColbournDinitz2006}, the previously defined couple $(V,\mathcal{B}_k)$ identifies the complete (or trivial) $k$-design that considers only blocks of size $k$. 

For $c\in \{0,1,2,\ldots, k\}$, let $\mathcal{J}_c = \{ S_1, S_2 \in D \mid |S_1 \cap S_2 | = c \}$ be the set of all pairs of elements in the design which have $c$ indices in common and denote $f_c = |\mathcal{J}_c|$ its cardinality. Then,  we can write the variance of an incomplete U-statistic of order $k$ as:      

\begin{equation}
\operatorname{Var} U_{n,D}^{(k)} =|D|^{-2} \sum_{c=0}^k f_c \; \sigma_c^2,\label{eq:var_ustat} 
\end{equation}
where $\sigma_c^2=\operatorname{Cov}\left(h\left(S_1\right), h\left(S_2\right)\right)$ with $S_1, S_2 \in \mathcal{J}_c$ and $h(S)$ is shorthand for $h\left(X_{i_1}, \ldots, X_{i_k}\right)$, where $S=\left\{i_1, \ldots, i_k\right\} \in D$. Moreover, by Theorem 4 in \cite{lee1990u} (page 15), we have: 

\begin{equation}
\frac{\sigma_{b}^2}{b} \leq \frac{\sigma_{c}^2}{c}   \label{eq:ineq} 
\end{equation}

for $0 < b  \leq c \leq k$ and $b \in \{0, 1, \ldots, k\}$. In this work, we assume that $\sigma_{k}^2 < \infty$. Combined with inequality~\eqref{eq:ineq} and the identity $\sum_{c=0}^k f_c = |D|^{2}$, this assumption guarantees that $\operatorname{Var} U_{n,D}^{(k)}$ is finite. Moreover, still by inequality~\eqref{eq:ineq}, if $\sigma_c^2 = 0$, then it follows that $\sigma_1^2 = \cdots = \sigma_{b}^2 = 0$ and the U-statistic is called \emph{degenerate of order $c$}. In this work, we assume $\sigma_k^2 > 0$, which ensures that $U_{n,D}^{(k)}$ can be at most degenerate of order $k-1$. More generally, the order of the degeneracy determines the asymptotic distribution of the complete U-statistic. This is because $\sigma_c^2 = 0$, implies that the first $c$ terms vanish in the \emph{Hoeffding decomposition} \citep{Hoeffding1961}, which is a representation of a U-statistic of order $k$ with a sum of uncorrelated U-statistics of order $1, \ldots, k$. For example, when $k=2$ and $\sigma_1^2 = 0$, we have a \emph{first order degeneracy} and \cite{gregory1977large} proved that $n \;( U_{n,\mathcal{B}_2}^{(2)} - E[U_{n,\mathcal{B}_2}^{(2)}])$ converges in distribution to an infinite weighted sum of centered $\chi^2$ random variables. However, if $D \subset \mathcal{B}_k$, the limiting behavior of $U_{n,D}^{(2)}$ can vary and crucially depends on the choice of the design \citep{weber1981incomplete}.

\subsection{Equireplicate designs}\label{subsec:equirep}

The class of \emph{equireplicate designs} is the main focus of our paper. Within this class, every index occurs in the same number of elements of the design, usually called blocks. Any $r$-equireplicate design $D$ based on $V$, where the positive integer $r$ denotes the replication parameter, satisfies:

\begin{equation}
|D| \; k = n \; r.     \label{eq:equirep} 
\end{equation}

When $k = 2$, incomplete U-statistics based on equireplicate designs achieve minimum variance, as demonstrated by Theorem 1 on page 195 of \cite{lee1990u}. When $k > 2$, additional conditions are needed to ensure minimum variance, such as \emph{balanced incomplete block designs} (BIBDs) and \emph{cyclic designs} (see \cite{lee1982incomplete}, Examples 2 and 7, respectively). Further details are provided in \ref{app:min_var}. 

The existence of equireplicate designs requires specific divisibility conditions. In general, given a nonzero integer $a$ and an integer $b$, we write $a \mid b$ to indicate that $a$ divides $b$, and $a \nmid b$ otherwise. 
For our design constructions, we use modular arithmetic and some basic group theory (see \cite{gallian2021contemporary}, ch.~2 for an overview). We write $b \pmod n$ to denote the remainder upon dividing $b$ by $n$, but set $b\pmod n\defeq n$ if the remainder is zero; we also write $\text{gcd}(b,n)$ as the greatest common divisor of $b$ and $n$.
In Section~\ref{sec:constuct}, we present efficient algorithmic constructions of equireplicate designs that use these algebraic concepts. Since we relabeled 0 as $n$ when calculating values $\pmod n$, we consider $\mathbb{Z}_n=\{1,2,\ldots, n\}$ as the group of integers modulo $n$ with additive identity element $n$. This notation was chosen to ensure that the elements of our designs are labeled from the index set $V=\{1,\ldots, n\}$. 

\subsection{Hypergraphs and deterministic designs}\label{subsec:hyper}

A hypergraph $H$ is a couple $(V, E)$, where $V$ is the vertex set, and $E \subseteq \mathcal{P}(V)\setminus\{\varnothing\}$ is the hyperedge set, with $\mathcal{P}(V)$ denoting the power set of $V$. Each hyperedge $e \in E$ is a non-empty subset of $V$ and the singleton $e_{v} = \{v\}$ represents a loop for all $v \in V$. We call $H(v)$ the set of hyperedges containing the vertex $v$ and define the degree as $d(v)=|H(v)|$ for all $v \in V$. The average degree of a hypergraph $H$ is denoted by $\Bar{d}(H)$ and its maximum degree by $\Delta(H)$. If each vertex has the same degree, we say that the hypergraph $H$ is \emph{$r$-regular}, which implies that $\Bar{d}(H) = \Delta(H) = r$. 

When every hyperedge contains exactly $k$ vertices, i.e., $|e| = k$ for all $e \in E$, the hypergraph is called \emph{$k$-uniform}. We can extend the classical \emph{handshaking lemma}—that relates the number of edges in a graph with the sum of the degrees—to $k$-uniform hypergraphs. Indeed, for any $k$-uniform hypergraph $H = (V, E)$, it holds that:

\begin{equation}
|E| \; k = \sum_{v \in V} d(v)\;,     \label{eq:handshaking_kuni}
\end{equation}

\noindent because each hyperedge contributes exactly $k$ incidences, one for each vertex it contains.

To derive the Berry-Esseen bound and the asymptotic properties in section \ref{sec:BE_bound}, we rely on a particular line graph construction. For a given hypergraph $H = (V, E)$, its line graph $L(H)$ is the couple $(E,E^{\prime})$ where $E^{\prime}
\;=\;\bigl\{\{e_i,e_j\}
\;\bigm|\; e_i, e_j \in E \;\text{and}\; |e_i \cap e_j| \;\neq\;0 
\bigr\}$. Note that our definition automatically includes loops, i.e., self-edges. In contrast, alternative definitions of the line graph explicitly exclude this possibility by imposing the additional condition $e_i \neq e_j$ for any element of $E^{\prime}$. In any case, $L(H)$ is a graph, with vertex set that coincides with the hyperedge set of $H$ and where an edge connects two vertices if and only if the corresponding hyperedges in $H$ have at least one vertex in common.

A correspondence exists between $k$-uniform hypergraphs and deterministic designs\footnote{In \ref{app:equi_hyper}, we discuss an analog of Observation \ref{obs:hyper_design} that applies specifically to equireplicate designs and $k$-uniform, $r$-regular hypergraphs.}:
\begin{observation}\label{obs:hyper_design}
The hyperedge set of any $k$-uniform hypergraph identifies a deterministic design on the vertex set $V$, and conversely, any deterministic design with building blocks of size $k$ corresponds to the hyperedge set of a $k$-uniform hypergraph.
\end{observation}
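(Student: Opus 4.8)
The plan is to establish the correspondence as a definitional identity between the two families of objects, proving both implications by direct verification that their defining constraints coincide. For the forward direction, I would take an arbitrary $k$-uniform hypergraph $H=(V,E)$ and observe that $k$-uniformity forces $|e|=k$ for every $e\in E$, so that each hyperedge is an element of $\mathcal{B}_k=\{S\subseteq V : |S|=k\}$; since $k\geq 1$ these hyperedges are automatically nonempty, respecting the requirement $E\subseteq\mathcal{P}(V)\setminus\{\varnothing\}$. Consequently $E\subseteq\mathcal{B}_k$, which is exactly the condition for $E$ to be a deterministic design on $V$ with blocks of size $k$.

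For the converse, I would start from a deterministic design $D\subseteq\mathcal{B}_k$ and note that every block $S\in D$ satisfies $|S|=k$ and $S\subseteq V$. Taking $(V,D)$ as a candidate hypergraph, each block is a nonempty subset of $V$ of cardinality exactly $k$, so $(V,D)$ is $k$-uniform by definition. Thus the identity map on subsets realizes a bijection between the hyperedge sets of $k$-uniform hypergraphs on $V$ and the deterministic designs with blocks of size $k$, since both families are precisely the subsets of $\mathcal{B}_k$.

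I do not anticipate a genuine obstacle here: the statement amounts to recognizing that $k$-uniform edge sets and size-$k$ designs are two names for the same combinatorial object, namely a collection of $k$-element subsets of $V$. The only point deserving care is bookkeeping at the definitional boundary—confirming that nonemptiness of hyperedges and the absence of repeated blocks (both $E$ and $D$ are sets rather than multisets) are consistent across the two conventions—so that the correspondence is literally the identity on $\mathcal{B}_k$ and not merely a bijection up to relabeling.
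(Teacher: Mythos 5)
Your proposal is correct and takes essentially the same route as the paper, which treats this observation as an immediate definitional identity: a $k$-uniform hyperedge set is by definition a subset of $\mathcal{B}_k$ and hence a design, and conversely $(V,D)$ with $D\subseteq\mathcal{B}_k$ is a $k$-uniform hypergraph. Your extra care about nonemptiness and the set-versus-multiset convention is consistent with the paper's definitions and adds nothing that conflicts with its argument.
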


Because of this equivalence, any structural or theoretical insight gained in the hypergraph setting translates directly into properties of the corresponding designs and viceversa. Thus, for instance, the couple $(V, \mathcal{B}_k)$ can be interpreted either as the complete $k$-uniform hypergraph—commonly denoted by $K_n^{(k)}$—or as the complete $k$-design that underlies the complete $k$th-order U-statistic. More generally, given a deterministic design $D\subseteq \mathcal{B}_k$, the couple $(V,D)$ defines the hypergraph of the design $D$, which we denote by $\mathcal{D} = (V,D)$. When $D\subset \mathcal{B}_k$, $\mathcal{D}$ is a sub-hypergraph of $K_{n}^{(k)}$ and its hyperedge set $D$ is the design of an incomplete U-statistic of order $k$. By equation~\eqref{eq:handshaking_kuni}, any deterministic design $D$ defined over $V$ must satisfy:

\begin{equation}
|D| \; k = n \; \Bar{d}(\mathcal{D})\;,     \label{eq:deter} 
\end{equation}

\noindent where $\Bar{d}(\mathcal{D})$ is the average degree of the hypergraph of the design $D$. We rely on this result to express the size of a deterministic design as $|D|=n\;\Bar{d}(\mathcal{D})/k$. Clearly, for an $r$-equireplicate design, $\Bar{d}(\mathcal{D}) = r$ and equation (\ref{eq:deter}) boils down to equation (\ref{eq:equirep}).

\section{Normal Approximations of Incomplete U-statistics}\label{sec:BE_bound}

An incomplete U-statistic can be seen as a classical mean computed in the space induced by the kernel function $h$. Therefore, equation~\eqref{eq:IUk} can be expressed more concisely as: 

\begin{equation}
  U_{n,D}^{(k)} = \frac{1}{|D|} \sum^{}_{S \in D} h(S),  \label{eq:IUk_red}
\end{equation}
where $h(S)$ stays for the random variable  $h\left(X_{i_1}, \ldots, X_{i_k}\right)$ with $S=\left\{i_1, \ldots, i_k\right\} \in D$.

The random variables in the set $\left\{ h(S), \;S \in D \right\}$ are identically distributed since the data, $X_1, \ldots, X_n$, are i.i.d., and $h$ is a fixed, symmetric and measurable kernel function. Consequently, we denote $E\left[h(S)\right] = \mu_{k}$ for all $S \in D$ and, following the notation introduced in section \ref{subsec:ustat}, we can conclude that $\operatorname{Var}[h(S)] = \sigma^{2}_{k}$ for all $S \in D$. However, unlike in a standard i.i.d. mean estimation setting, the set may contain random variables which are dependent. The main insight that underpins all the results of this section is that we can tame this dependence by controlling $\Delta(\mathcal{D})$, the maximum degree of the hypergraph of the design. This quantity has a simple interpretation: for a given design $D$, $\Delta(\mathcal{D})$ represents the index that appears most frequently in the design.

More specifically, in this section we characterize the dependence structure of an incomplete U-statistic through its dependency graph, which allows us to establish a Berry-Esseen bound that holds for all incomplete U-statistics of deterministic designs, even in the degenerate case. Leveraging this bound, we study the asymptotic properties of this class of statistics, when key parameters—such as the maximum degree $\Delta(\mathcal{D})$ and the order $k$—are allowed to grow with $n$. Overall, our results indicate that, among all deterministic designs of a given size, equireplicate ones should be preferred because they provide precise control over $\Delta(\mathcal{D})$.

\subsection{The dependency graph of incomplete U-statistics}

The set $\left\{ h(S), \;S \in D \right\}$ contains the random variables whose average defines $U_{n,D}^{(k)}$, as shown previously in equation~\eqref{eq:IUk_red}. To characterize the dependence structure of this set, we define its dependency graph, a concept originally introduced in \cite{baldi1989normal} to derive normal approximations of distributions:

\begin{definition}\label{def:dep_graph}[Dependency graph] 
    For a set of random variables $\left\{ h(S), \;S \in D \right\}$ indexed by the vertices of a graph $\mathcal{G}=(D, E)$, $\mathcal{G}$ is said to be a dependency graph if, for any pair of disjoint sets $\Gamma_1$ and $\Gamma_2$ in $D$ such that no edge in $E$ has one endpoint in $\Gamma_1$ and the other in $\Gamma_2$, the sets of random variables $\left\{h(S), S \in \Gamma_1\right\}$ and $\left\{h(S), S \in \Gamma_2\right\}$ are independent.
\end{definition}

In a dependency graph, there is an edge between two random variables only if they are dependent (absence of an edge implies independence). In the case of U-statistics, we include an edge between two random variables $h(S_1)$ and $h(S_2)$ if and only if $S_1\cap S_2\neq\emptyset$.  

Note that this corresponds with the definition of $L(\mathcal{D})$, given in section~\ref{subsec:hyper}:

\begin{proposition}\label{prop:dep_graph}
$L(\mathcal{D})$ is the dependency graph of $\left\{ h(S), \;S \in D \right\}$.  
\end{proposition}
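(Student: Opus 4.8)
The plan is to show that $L(\mathcal{D})$ satisfies the defining property of a dependency graph for the collection $\left\{ h(S),\; S \in D \right\}$, as stated in Definition~\ref{def:dep_graph}. Recall that the vertex set of $L(\mathcal{D})$ is exactly $D$, the hyperedge set of the design hypergraph $\mathcal{D}=(V,D)$, which coincides with the index set of the random variables $h(S)$. So the vertex sets match, and what remains is to verify the independence condition on the edges.

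First I would take any two disjoint subsets $\Gamma_1, \Gamma_2 \subseteq D$ such that no edge of $L(\mathcal{D})$ joins a vertex of $\Gamma_1$ to a vertex of $\Gamma_2$. By the definition of the line graph given in section~\ref{subsec:hyper}, an edge $\{e_i, e_j\}$ is present in $L(\mathcal{D})$ precisely when $|e_i \cap e_j| \neq 0$. Hence the absence of any cross-edge between $\Gamma_1$ and $\Gamma_2$ translates directly into the combinatorial statement that $S_1 \cap S_2 = \varnothing$ for every $S_1 \in \Gamma_1$ and every $S_2 \in \Gamma_2$. Equivalently, writing $I_1 = \bigcup_{S \in \Gamma_1} S$ and $I_2 = \bigcup_{S \in \Gamma_2} S$ for the sets of indices touched by the two families of blocks, this condition is exactly $I_1 \cap I_2 = \varnothing$.

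Next I would conclude independence from this disjointness of index sets. Every random variable $h(S)$ with $S \in \Gamma_1$ is a measurable function of the subcollection $\{X_i : i \in I_1\}$, and likewise each $h(S)$ with $S \in \Gamma_2$ is a measurable function of $\{X_i : i \in I_2\}$. Since the underlying data $X_1, \ldots, X_n$ are i.i.d.\ and $I_1 \cap I_2 = \varnothing$, the families $\{X_i : i \in I_1\}$ and $\{X_i : i \in I_2\}$ are independent. Independence is preserved under taking measurable functions of disjoint blocks of independent variables, so $\left\{h(S) : S \in \Gamma_1\right\}$ and $\left\{h(S) : S \in \Gamma_2\right\}$ are independent, which is exactly the property required by Definition~\ref{def:dep_graph}.

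This argument is essentially a direct translation between three equivalent formulations (no line-graph edges, pairwise disjoint blocks, disjoint index sets) followed by the elementary fact about functions of independent variables, so there is no deep obstacle. The one point that warrants care is the step from \emph{pairwise} disjointness of the individual blocks ($S_1 \cap S_2 = \varnothing$ for each pair) to disjointness of the \emph{unions} $I_1 \cap I_2 = \varnothing$; these are in fact equivalent, since an index lying in both $I_1$ and $I_2$ would lie in some $S_1 \in \Gamma_1$ and some $S_2 \in \Gamma_2$, forcing $S_1 \cap S_2 \neq \varnothing$ and hence a forbidden cross-edge. I would state this equivalence explicitly to make the reduction airtight, after which the independence conclusion follows immediately.
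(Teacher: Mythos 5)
Your proposal is correct and follows essentially the same route as the paper's proof: translate the absence of cross-edges in $L(\mathcal{D})$ into emptiness of the block intersections, then invoke the i.i.d.\ structure of $X_1,\ldots,X_n$. One small point in your favor: by passing explicitly to the unions $I_1=\bigcup_{S\in\Gamma_1}S$ and $I_2=\bigcup_{S\in\Gamma_2}S$ and observing that the two \emph{families} $\{h(S):S\in\Gamma_1\}$ and $\{h(S):S\in\Gamma_2\}$ are measurable functions of disjoint sub-collections of independent variables, you obtain the joint independence of the two collections that Definition~\ref{def:dep_graph} actually requires, whereas the paper's argument is phrased only in terms of pairwise independence of individual $h(S_1)$ and $h(S_2)$; your version closes that (minor) gap.
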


The maximum degree of the dependency graph $\Delta(L(\mathcal D))$, bounds how many random variables each $h(S)$ can be dependent with. Controlling this quantity is key for establishing a normal limiting distribution, as we demonstrate in the following section.  

\subsection{Berry-Esseen bounds for incomplete U-statistics}\label{subsec:BE_bound}

We begin by deriving an interpretable and tight upper bound for the maximum degree of the dependency graph $\Delta(L(\mathcal{D}))$, expressed solely in terms of $k$ and $\Delta(\mathcal{D})$. This result is then combined with key findings from the literature on dependency graphs to establish a Berry–Esseen bound for incomplete U-statistics based on deterministic designs.

Intuitively, the maximum value of $\Delta(L(\mathcal{D}))$ is attained when the block $S$, which identifies the variable $h(S)$, consists of indices that each appear with the highest possible frequency in the design; i.e., each has degree $\Delta(\mathcal{D})$ in the hypergraph $\mathcal{D}$. In this case, each of the $k$ indices in $S$ can contribute up to $(\Delta(\mathcal{D}) - 1)$ edges in the line graph (excluding the self-edge), yielding a total of at most $k \;(\Delta(\mathcal{D}) - 1)$ dependencies for $h(S)$. In contrast, the minimum value of $\Delta(L(\mathcal{D}))$ is reached when only one index in the block $S$ has degree $\Delta(\mathcal{D})$, while the remaining $k - 1$ indices do not appear in any other blocks. In this setting, $h(S)$ is connected to exactly $(\Delta(\mathcal{D}) - 1)$ other variables in $L(\mathcal{D})$. We formalize the upper and lower bounds—both of which are tight—in the following lemma.

\begin{lemma}\label{lemma:maxdeg}
    Let $\mathcal{D} = (V, D)$ be the hypergraph of a deterministic design $D$ and $L(\mathcal{D})$ its line graph. Then, $$ \Delta(\mathcal{D}) \leq \Delta(L(\mathcal{D})) \leq k\;(\Delta(\mathcal{D})-1) + 1 \; .$$
\end{lemma}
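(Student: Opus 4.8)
The plan is to read the degree of a vertex $S$ of the line graph $L(\mathcal{D})$ directly as an incidence count in the hypergraph. Because the self-edge $\{S,S\}$ is always present (indeed $|S\cap S|=k\neq 0$), under the convention matching the intuition in the text the degree of $S$ in $L(\mathcal{D})$ equals the number of blocks $S'\in D$ with $|S\cap S'|\neq 0$, counting $S$ itself. Writing $S=\{i_1,\ldots,i_k\}$ (recall $\mathcal{D}$ is $k$-uniform, so each block has exactly $k$ vertices) and recalling that $H(v)$ denotes the set of blocks containing the vertex $v$, this neighborhood is exactly $\bigcup_{j=1}^{k} H(i_j)$, since $S'$ meets $S$ if and only if $S'$ contains at least one of $i_1,\ldots,i_k$. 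Both inequalities then reduce to estimating the size of this union.

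For the upper bound I would deliberately avoid the crude estimate $\sum_{j}|H(i_j)|\leq k\,\Delta(\mathcal{D})$, which over-counts $S$ (it lies in every $H(i_j)$). Instead, since $S\in H(i_j)$ for all $j$,
\[
\bigcup_{j=1}^{k} H(i_j) \;=\; \{S\}\;\cup\;\bigcup_{j=1}^{k}\bigl(H(i_j)\setminus\{S\}\bigr),
\]
so its cardinality is at most $1+\sum_{j=1}^{k}\bigl(d(i_j)-1\bigr)\leq 1+k\,(\Delta(\mathcal{D})-1)$, using $d(i_j)\leq\Delta(\mathcal{D})$ for each $j$. As this holds for every $S\in D$, we obtain $\Delta(L(\mathcal{D}))\leq k(\Delta(\mathcal{D})-1)+1$. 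The one delicate point — the only place a naive argument loses a factor — is pulling the common element $S$ out of the union before applying subadditivity; once that is done the bound is immediate.

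For the lower bound I would simply exhibit one vertex of $L(\mathcal{D})$ of degree at least $\Delta(\mathcal{D})$. Choose $v\in V$ with $d(v)=\Delta(\mathcal{D})$ and let $S_1,\ldots,S_{\Delta(\mathcal{D})}$ be the blocks in $H(v)$. Every pair among these shares the vertex $v$, so in $L(\mathcal{D})$ the block $S_1$ is adjacent to all of $S_1,\ldots,S_{\Delta(\mathcal{D})}$ (including itself, via the self-edge); hence its degree is at least $\Delta(\mathcal{D})$, and therefore $\Delta(L(\mathcal{D}))\geq\Delta(\mathcal{D})$.

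Finally, tightness — asserted in the surrounding text but not part of the displayed inequalities — can be certified by the two extremal configurations already described: a ``sunflower'' block whose $k$ vertices each attain degree $\Delta(\mathcal{D})$, with otherwise disjoint block-sets, realizes the upper bound, while a design in which a single vertex of a block carries all the replication and the remaining $k-1$ vertices appear in no other block realizes the lower bound. I anticipate no genuine obstacle beyond bookkeeping: the whole argument is a careful incidence count, and the only spots demanding care are the treatment of the self-loop in the degree convention and the removal of the repeated element $S$ from the union.
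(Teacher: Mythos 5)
Your proposal is correct and follows essentially the same incidence-counting argument as the paper: the upper bound comes from noting that each of the $k$ indices of a block $S$ contributes at most $\Delta(\mathcal{D})-1$ neighbors beyond $S$ itself (your union decomposition $\{S\}\cup\bigcup_j\bigl(H(i_j)\setminus\{S\}\bigr)$ is just a cleaner phrasing of the paper's count), and the lower bound comes from a block containing a maximum-degree vertex. The self-loop convention you adopt matches the paper's, and your extremal examples (linear/regular for the upper bound, star for the lower) are the same ones the paper uses to certify tightness.
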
 

With this foundation, we can leverage powerful existing results from the literature on dependency graphs. In particular, we rely on \cite{chen2004normal} that couples \emph{Stein’s method} with a concentration inequality to derive normal approximations under local dependence. Combining their findings with Lemma \ref{lemma:maxdeg}, we obtain a Berry-Esseen bound valid for all incomplete U-statistics based on deterministic designs.

\begin{theorem}[Berry-Esseen for Deterministic Designs]\label{theo:BE_bound}

Let $\left\{ h(S), \;S \in D \right\}$ be random variables indexed by the vertices of their dependency graph $L(\mathcal{D})$, with $D$ being a deterministic design. Assume that $0< \sigma^{2}_{k} < \infty$ and that there exists $2<p \leq 3$ such that $E\left[\left|h(S) - \mu_k\right|^p \right] \leq \theta$ for some $\theta>0$. Then,

\small{
\begin{equation}\label{eq:berry_esseen}
\sup _z\left|P\left(\frac{U_{n,D}^{(k)} - \mu_{k}}{\sqrt{\operatorname{Var}U_{n,D}^{(k)}}} \leq z\right)-\Phi(z)\right| \leq 75 \; \{k \;(\Delta(\mathcal{D})-1) + 1\}^{5(p-1)}\; \left(\frac{k}{n\;\Bar{d}(\mathcal{D})} \right)^{\frac{p}{2} - 1}\; \frac{\theta}{\sigma_k^p} \;\;\;. 
\end{equation}
}
\end{theorem}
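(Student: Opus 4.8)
The plan is to apply an existing Berry–Esseen result for sums of locally dependent random variables — specifically the bound of \cite{chen2004normal} obtained via Stein's method and a concentration inequality — to the normalized sum defining $U_{n,D}^{(k)}$, and then to substitute the structural quantities of the design hypergraph $\mathcal{D}$ and its line graph $L(\mathcal{D})$ into the generic bound. First I would write $W = \sum_{S\in D}\{h(S)-\mu_k\}$, so that the studentized statistic in \eqref{eq:berry_esseen} is $W/\sqrt{\Var(W)}$ (note $\Var U_{n,D}^{(k)} = |D|^{-2}\Var(W)$ and the $|D|^{-1}$ factors cancel in the ratio). By Proposition \ref{prop:dep_graph}, $L(\mathcal{D})$ is a genuine dependency graph for the family $\{h(S)-\mu_k : S\in D\}$ in the sense of Definition \ref{def:dep_graph}, which is exactly the local-dependence hypothesis required to invoke the Chen–Shao theorem. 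The theorem of \cite{chen2004normal} then yields a bound on $\sup_z|P(W/\sqrt{\Var W}\le z)-\Phi(z)|$ of the form (constant) $\cdot\, \mathcal{N}^{\,c(p)}\, \sum_{S}E|h(S)-\mu_k|^p / (\Var W)^{p/2}$, where $\mathcal{N}$ is the maximum degree of the dependency graph and $c(p)$ the appropriate exponent; matching their stated constant and exponent gives the prefactor $75$ and the power $5(p-1)$.

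The next step is to bound the three ingredients $\mathcal{N}$, $\sum_S E|h(S)-\mu_k|^p$, and $\Var W$ in terms of the design parameters. For the maximum degree of the dependency graph I would invoke Lemma \ref{lemma:maxdeg}, which gives $\mathcal{N}=\Delta(L(\mathcal{D}))\le k(\Delta(\mathcal{D})-1)+1$; raising this to the exponent $5(p-1)$ produces the combinatorial factor in \eqref{eq:berry_esseen}. For the numerator, since the $h(S)$ are identically distributed the moment hypothesis $E|h(S)-\mu_k|^p\le\theta$ gives $\sum_{S\in D}E|h(S)-\mu_k|^p\le |D|\,\theta$. The delicate ingredient is the denominator: I need a clean lower bound on $\Var W$. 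Here I would lower-bound the variance of the sum by the sum of the individual variances minus nothing — more precisely, I would use that the diagonal term $c=k$ in the variance decomposition \eqref{eq:var_ustat} contributes $f_k\sigma_k^2 = |D|\sigma_k^2$ (each block intersects itself in $k$ indices, so $f_k\ge|D|$), together with nonnegativity of the remaining covariance terms, which follows from inequality \eqref{eq:ineq} guaranteeing $\sigma_c^2\ge 0$ for all $c$. This yields $\Var W \ge |D|\,\sigma_k^2$, hence $(\Var W)^{p/2}\ge |D|^{p/2}\sigma_k^p$.

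Combining these three estimates, the bound becomes
\begin{equation*}
\sup_z\left|\cdots\right| \;\le\; 75\,\{k(\Delta(\mathcal{D})-1)+1\}^{5(p-1)}\,\frac{|D|\,\theta}{|D|^{p/2}\sigma_k^p}
\;=\;75\,\{k(\Delta(\mathcal{D})-1)+1\}^{5(p-1)}\,|D|^{1-p/2}\,\frac{\theta}{\sigma_k^p}.
\end{equation*}
Finally I would rewrite $|D|^{1-p/2}=|D|^{-(p/2-1)}$ and substitute the handshaking identity \eqref{eq:deter}, $|D| = n\,\bar d(\mathcal{D})/k$, to turn $|D|^{-(p/2-1)}$ into $\{k/(n\,\bar d(\mathcal{D}))\}^{p/2-1}$, recovering \eqref{eq:berry_esseen} exactly. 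The main obstacle I anticipate is verifying that the generic Chen–Shao bound — whose hypotheses are typically stated for a single normalized sum with unit variance — applies with the claimed constant $75$ and exponent $5(p-1)$ after the normalization by $\sqrt{\Var W}$, and confirming that their theorem's moment condition is met by our per-block $p$-th moment assumption; matching the constants precisely to the source theorem is the step most likely to require care, whereas the variance lower bound, though essential, follows cleanly from the nonnegativity of the $\sigma_c^2$ and the diagonal contribution $f_k=|D|$.
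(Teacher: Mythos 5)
Your proposal is correct and follows essentially the same route as the paper's proof: invoke Proposition \ref{prop:dep_graph} to identify $L(\mathcal{D})$ as the dependency graph, apply Theorem 2.7 of \cite{chen2004normal} to the normalized sum, bound $\Delta(L(\mathcal{D}))$ by Lemma \ref{lemma:maxdeg}, lower-bound the variance of the sum by the diagonal contribution $|D|\,\sigma_k^2$ (the content of Lemma \ref{lemma:ord_var}), and convert $|D|$ to $n\,\bar d(\mathcal{D})/k$ via the handshaking identity. The only cosmetic difference is that the paper normalizes each summand up front to a unit-variance sum before invoking Chen--Shao, whereas you normalize at the end; the computation is identical.
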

\normalsize As either $k$ or $\Delta(\mathcal{D})$ increase, while all other quantities are held fixed, the distance to normality grows. A similar effect occurs when the gap between the average degree $\bar{d}(\mathcal{D})$ and the maximum degree $\Delta(\mathcal{D})$ widens, which corresponds to deterministic designs whose associated hypergraph exhibits an increasingly skewed degree distribution. 

Our Berry-Esseen bound distinguishes itself from existing results in the U-statistics literature by being valid in both degenerate and non-degenerate cases. It holds under minimal moment conditions on the kernel function $h$ and makes explicit the critical role played by the order $k$ and the maximum degree $\Delta(\mathcal{D})$, both of which must be controlled to prevent deviations from normality. In contrast, existing works on convergence rates to limiting distributions for incomplete U-statistics either exclude the degenerate case altogether \citep{shao2025u}, or address it under assumptions that are less interpretable than ours or stronger. For instance, \cite{rinott1997coupling} study the more general setting of weighted U-statistics but address the degenerate case only when $k=2$, under comparatively less transparent assumptions. Likewise, \cite{kim2024dimension} focus on the second-order case and require the kernel to have a finite fourth moment to establish a Berry–Esseen bound for their cross U-statistics. \cite{chen2019randomized} derive Gaussian approximations valid for arbitrary order $k$ and in high dimensions, but their results still rely on bounded polynomial moments of degree at least four and also \citet{sturma2024testing}, who extend \citet{chen2019randomized} to a mixed-degenerate setting more in line with our unified framework, assume the kernel to be sub-Weibull. Finally, \citet{leung2026berry} establishes Berry–Esseen bounds for incomplete U-statistics under Bernoulli sampling—assuming a finite third moment of the kernel—however the analysis is limited to a fixed order $k$.

\begin{remark}[Berry-Esseen for bounded kernels]\label{remark:be_bounded_kern}
    If the kernel is assumed to be bounded, such as the Gaussian kernel commonly employed in the MMD two-sample test \citep{gretton2012kernel}, the bound in Theorem~\ref{theo:BE_bound} can be sharpened by leveraging results from \citet{baldi1989normal}, which have also been applied by \citet{cai2024asymptotic} that propose an independence test for high-dimensional data. We formally state and prove this tighter bound—most notably reducing the dependence on the maximum degree of the dependency graph from $\Delta(L(\mathcal{D}))^{5(p-1)}$ to $\Delta(L(\mathcal{D}))$—in Theorem \ref{theo:BE_bounded_kern} of \ref{supp:sec3}.
\end{remark}

In addition, our approach departs from traditional methods in the U-statistics literature by focusing on the dependency graph of the random variables $\left\{ h(S), \;S \in D \right\}$, thus operating directly in the space induced by the kernel function $h$. This perspective allows us to entirely bypass the Hoeffding decomposition, which is the standard analytical tool but is sensitive to degeneracy. Notably, when the U-statistic is degenerate of order $k-1$, the first $k-1$ terms of the Hoeffding decomposition vanish, leaving only the highest-order component. In contrast, the dependency graph is not affected by such degeneracy, as it encompasses all types of dependencies, including those of order $k$.

Note that the presence of any type of degeneracy still impacts the variance of an incomplete U-statistic. To establish \eqref{eq:berry_esseen}, we relied on a lower bound for $\operatorname{Var}U_{n,D}^{(k)}$ that remains valid even under extreme degeneracy, specifically when $\sigma^2_{k-1} = 0$. A comprehensive discussion on the variance of incomplete U-statistics of deterministic designs—including tight upper and lower bounds that involve $\Bar{d}(\mathcal{D})$ and $\Delta(\mathcal{D})$—can be found in~\ref{app:var_Ustat}.

Among deterministic designs, equireplicate ones offer tight control over $\Delta(\mathcal{D})$.
In an $r$-equireplicate design $\mathcal{D}^{\dagger}$, we have $\Delta(\mathcal{D}^{\dagger}) = r$ since every index appears exactly $r$ times. In the next Corollary of Theorem \ref{theo:BE_bound}, we establish a Berry-Esseen bound specific to all incomplete U-statistics based on equireplicate designs.

\begin{corollary}\label{cor:BE_equi}\textnormal{(Berry-Esseen for Equireplicate Designs).}
Let $D^{\dagger}$ be an $r$-equireplicate design and assume that the conditions of Theorem \ref{theo:BE_bound} are met. Then,

\begin{equation}\label{eq:be_equi}
\sup _z\left|P\left(\frac{U_{n,D^{\dagger}}^{(k)} - \mu_{k}}{\sqrt{\operatorname{Var}U_{n,D^{\dagger}}^{(k)}}} \leq z\right)-\Phi(z)\right| \leq 75 \; \{k \;(r-1) + 1\}^{5(p-1)}\; \left(\frac{k}{n\;r} \right)^{\frac{p}{2} - 1}\; \frac{\theta}{\sigma_k^{p}} \;\;\;. 
\end{equation}
\end{corollary}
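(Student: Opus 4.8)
The plan is to obtain Corollary~\ref{cor:BE_equi} as a direct specialization of Theorem~\ref{theo:BE_bound}, by evaluating the two hypergraph-degree quantities that appear in the general bound—the maximum degree $\Delta(\mathcal{D})$ and the average degree $\Bar{d}(\mathcal{D})$—at their values for an $r$-equireplicate design.

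First I would translate the defining property of an $r$-equireplicate design into the language of its hypergraph $\mathcal{D}^{\dagger} = (V, D^{\dagger})$ via Observation~\ref{obs:hyper_design}. By definition, every index of $V$ occurs in exactly $r$ blocks of $D^{\dagger}$, which is precisely the statement that $d(v) = r$ for every $v \in V$; hence $\mathcal{D}^{\dagger}$ is an $r$-regular $k$-uniform hypergraph. As recalled in Section~\ref{subsec:hyper}, regularity forces the average and maximum degrees to coincide with the common degree, so that $\Bar{d}(\mathcal{D}^{\dagger}) = \Delta(\mathcal{D}^{\dagger}) = r$. Equivalently, comparing the handshaking identity~\eqref{eq:deter}, namely $|D^{\dagger}|\,k = n\,\Bar{d}(\mathcal{D}^{\dagger})$, with the equireplicate relation~\eqref{eq:equirep}, namely $|D^{\dagger}|\,k = n\,r$, yields $\Bar{d}(\mathcal{D}^{\dagger}) = r$, while $\Delta(\mathcal{D}^{\dagger}) = r$ follows because all degrees equal $r$.

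Finally, since the moment and nondegeneracy hypotheses of Theorem~\ref{theo:BE_bound} are assumed to hold for $D^{\dagger}$, that theorem applies verbatim. Substituting $\Delta(\mathcal{D}^{\dagger}) = r$ into the factor $\{k(\Delta(\mathcal{D})-1)+1\}^{5(p-1)}$ and $\Bar{d}(\mathcal{D}^{\dagger}) = r$ into the factor $(k/(n\,\Bar{d}(\mathcal{D})))^{p/2-1}$ immediately produces the right-hand side of inequality~\eqref{eq:be_equi}; note that the variance $\operatorname{Var}U_{n,D^{\dagger}}^{(k)}$ enters both statements in the same way through the studentization, so no separate variance argument is needed. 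There is no genuine analytical obstacle here: the result is a pure specialization, and the only point requiring care is the identity $\Bar{d}(\mathcal{D}^{\dagger}) = \Delta(\mathcal{D}^{\dagger}) = r$, which is exactly the regularity of an equireplicate design's hypergraph and which I would justify as above.
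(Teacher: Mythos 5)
Your proposal is correct and follows exactly the paper's argument: the paper likewise justifies the corollary by noting that the hypergraph of an $r$-equireplicate design is $r$-regular, so $\Delta(\mathcal{D}^{\dagger}) = \Bar{d}(\mathcal{D}^{\dagger}) = r$, and then substitutes these values into the bound of Theorem~\ref{theo:BE_bound}. No gaps; the extra care you take in deriving the regularity from Observation~\ref{obs:hyper_design} and equations~\eqref{eq:deter} and~\eqref{eq:equirep} only makes the specialization more explicit.
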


The result follows by substituting $\Delta(\mathcal{D}) = \bar{d}(\mathcal{D}) = r$ in~\eqref{eq:berry_esseen}, implying that there is no gap between the maximum and average degrees. Indeed, the hypergraph of an $r$-equireplicate design is $r$-regular, thus its degree distribution is a point mass at $r$. 

Moreover, Corollary~\ref{cor:BE_equi} is valid for both degenerate and non-degenerate cases, and the bound in \eqref{eq:be_equi} is tighter for incomplete U-statistics based on $r$-equireplicate designs than the bound in \eqref{eq:berry_esseen} for U-statistics based on non-equireplicate deterministic designs of the same size (see Remark \ref{remark:equi_vs_nonequi} for a detailed explanation). 

\begin{remark}\label{remark:equi_vs_nonequi}
     We can compare the maximum degree of the dependency graph for an incomplete U-statistic based on an $r$-equireplicate design $D^{\dagger}$ with that of one based on a non-equireplicate design $D$. Specifically, if $k\;(r - 1) + 1 < \Delta(\mathcal{D})$, then it must follow that $\Delta(L(\mathcal{D}^{\dagger})) < \Delta(L(\mathcal{D}))$ by Lemma \ref{lemma:maxdeg}. When $k = 2$ i.e., for second-order U-statistics, this condition is unnecessary: if both designs have the same size, the $r$-equireplicate one always yields a lower maximum degree in the dependency graph and is therefore preferable for minimizing dependence. Furthermore, when $|D^{\dagger}| = |D|$, we also have that $ r =  \Delta(\mathcal{D}^{\dagger})< \Delta(\mathcal{D})$ for any value of $k$. These results are formally stated and proved in Proposition~\ref{prop:maxdeg_linegraph} and Lemma~\ref{lemma:equi_r_vs_det}, both presented in~\ref{supp:sec3}.
\end{remark}

\begin{remark}\label{remark:be_equi_lin}[Berry-Esseen bound for equireplicate and linear designs]
    In the proof of Theorem~\ref{theo:BE_bound}, we also obtain a bound that improves upon~\eqref{eq:berry_esseen} whenever there exists a $c \in \{1, \ldots, k-1\}$ such that $f_c > 0$ and $\sigma_c^2 > 0$. This refined bound is applied in Corollary~\ref{cor:BE_equi_lin} to derive a tighter result—valid in the non-degenerate case—for the class of equireplicate and \emph{linear} designs. The term ``linear'' refers to the associated hypergraph $\mathcal{D}^{\diamond} = (V, D^{\diamond})$, in which for all $S_1, S_2 \in D^{\diamond}$ with $S_1 \neq S_2$, we have that $\left|S_1 \cap S_2\right| \leq 1$. That is, any pair of random variables in the set $\left\{ h(S), \;S \in D^{\diamond} \right\}$ shares at most one index.
\end{remark}

\subsection{Asymptotic results in the finite and infinite order regimes}\label{subsec:CLT}

Theorem~\ref{theo:BE_bound} provides a Berry-Esseen bound for all incomplete U-statistics based on deterministic designs. To ensure convergence to a normal distribution, both the order $k$ and the maximum degree $\Delta(\mathcal{D})$ must grow slowly with the number of observations $n$, as suggested by~\eqref{eq:berry_esseen}. This requirement is formalized in the central limit theorem below, where $k$ represents a sequence of natural numbers indexed by $n$.

\begin{theorem}\label{theo:CLT}\textnormal{(CLT for Incomplete U-statistics of Deterministic Designs).}\\ 
Let $\{ h_k(S), \;S \in D^{(k)}_n \}$ be a sequence of sets of random variables, with each set indexed by the vertices of its dependency graph $L(\mathcal{D}^{(k)}_n)$, with $D^{(k)}_n$ being a sequence of deterministic designs of growing size that identifies the sequence of hypergraphs $\mathcal{D}^{(k)}_n = (V,D^{(k)}_n)$. Moreover, assume that $0 < \sigma^{2}_{k} < \infty$ for all $k$, that there exists $\epsilon >0$ such that $E\left[\left|h_k(S) - \mu_k\right|^{2+\epsilon} \right] \leq \theta_k$ with $\theta_k>0$ for all $k$ and that $\max\{k,\;\Delta(\mathcal{D}^{(k)}_n),\; \theta_k\}=O(\log^q(n))$ with $q >0$. Then, as $n \rightarrow \infty$, we have 

\begin{equation}\label{eq:clt}
\frac{U_{n,D^{(k)}_n}^{(k)} - \mu_{k}}{\sqrt{\operatorname{Var}U_{n,D^{(k)}_n}^{(k)}}} \xrightarrow{\;\;d\;\;} \mathcal{N}(0,1) \;. 
\end{equation}

\end{theorem}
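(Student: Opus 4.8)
The plan is to apply the Berry--Esseen bound \eqref{eq:berry_esseen} of Theorem~\ref{theo:BE_bound} term-by-term along the sequence and show that its right-hand side vanishes as $n\to\infty$; since a uniform bound $\sup_z|P(\cdot\le z)-\Phi(z)|$ tending to $0$ forces pointwise convergence of the distribution functions to the continuous limit $\Phi$, hence convergence in distribution, \eqref{eq:clt} follows immediately. First I would fix an admissible exponent from the moment hypothesis: given $\epsilon>0$, set $p=2+\min(\epsilon,1)\in(2,3]$. When $\epsilon\le 1$ the bound $E[|h_k(S)-\mu_k|^{p}]\le\theta$ holds with $\theta=\theta_k$; when $\epsilon>1$ one takes $p=3$ and obtains a valid $\theta$ from Lyapunov's inequality, with $\theta=O(\log^{q}n)$ in either case. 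With $p$ fixed, both $\delta\defeq p/2-1>0$ and $5(p-1)$ are constants.

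The key simplification comes from the handshaking identity \eqref{eq:deter}, which gives $n\,\bar d(\mathcal D^{(k)}_n)=|D^{(k)}_n|\,k$ and therefore collapses the middle factor of \eqref{eq:berry_esseen} to $\bigl(k/(n\,\bar d(\mathcal D^{(k)}_n))\bigr)^{\delta}=|D^{(k)}_n|^{-\delta}$. Because the design uses each index in at least one block (otherwise that index carries no information and $n$ may be reduced), we have $\bar d(\mathcal D^{(k)}_n)\ge 1$, whence $|D^{(k)}_n|=n\,\bar d(\mathcal D^{(k)}_n)/k\ge n/k$. Using $k=O(\log^{q}n)$ this turns the middle factor into a genuine polynomial decay, $|D^{(k)}_n|^{-\delta}\le (k/n)^{\delta}=O\bigl(n^{-\delta}\log^{q\delta}n\bigr)$.

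For the two remaining factors I would invoke the growth assumption directly: $\max\{k,\Delta(\mathcal D^{(k)}_n),\theta_k\}=O(\log^{q}n)$ yields $k\,(\Delta(\mathcal D^{(k)}_n)-1)+1=O(\log^{2q}n)$, so the prefactor $\{k(\Delta(\mathcal D^{(k)}_n)-1)+1\}^{5(p-1)}=O\bigl(\log^{10q(p-1)}n\bigr)$ is polylogarithmic, and likewise $\theta=O(\log^{q}n)$. The one point that needs care is the denominator $\sigma_k^{p}$, which I expect to be the main obstacle, since $0<\sigma_k^2<\infty$ alone does not prevent $\sigma_k\to 0$ as $k$ grows with $n$. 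I would dispose of it by scale invariance: the standardized statistic on the left of \eqref{eq:clt} is unchanged when $h_k$ is replaced by $(h_k-\mu_k)/\sigma_k$, so one may assume without loss of generality that $\mu_k=0$ and $\sigma_k=1$, under which $\theta/\sigma_k^{p}$ is precisely the standardized absolute moment controlled by the hypothesis $\theta_k=O(\log^{q}n)$.

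Collecting the three estimates, the right-hand side of \eqref{eq:berry_esseen} is $O\bigl(n^{-\delta}\,\log^{c}n\bigr)$ for a constant $c=c(p,q)>0$. Since $\delta>0$ is fixed and independent of $n$, the polynomial factor $n^{-\delta}$ dominates every polylogarithmic term, so the whole bound tends to $0$. By the reduction in the first step this gives $\sup_z\bigl|P(\cdot\le z)-\Phi(z)\bigr|\to 0$ and hence \eqref{eq:clt}. The only subtleties to verify carefully in a full write-up are the covering condition $\bar d(\mathcal D^{(k)}_n)\ge 1$ that guarantees $|D^{(k)}_n|$ grows faster than any power of $\log n$, and the admissibility of the chosen $p$ together with the scale-invariance argument for the moment ratio.
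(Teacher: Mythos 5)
Your proposal follows essentially the same route as the paper's proof: apply Theorem~\ref{theo:BE_bound} with $p=2+\min(\epsilon,1)$ (the paper likewise works with $p=2+\epsilon$ for $0<\epsilon\le 1$), use the handshaking identity~\eqref{eq:deter} to turn the middle factor into a polynomial decay, observe that $\{k(\Delta(\mathcal{D}^{(k)}_n)-1)+1\}^{5(p-1)}\,k^{\epsilon/2}\,\theta_k$ is polylogarithmic under the growth hypothesis and is therefore dominated by $n^{\epsilon/2}$ in the denominator, and conclude from the vanishing Kolmogorov distance. Your explicit appeal to $\bar d(\mathcal{D}^{(k)}_n)\ge 1$ to guarantee $n\,\bar d(\mathcal{D}^{(k)}_n)\ge n$ makes precise a step the paper leaves implicit, and your Lyapunov reduction for $\epsilon>1$ is also a sensible addition.

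The one step that does not work as written is the scale-invariance disposal of $\sigma_k^{-p}$. Replacing $h_k$ by $(h_k-\mu_k)/\sigma_k$ indeed leaves the standardized statistic in~\eqref{eq:clt} unchanged, but the $(2+\epsilon)$-th absolute moment of the rescaled kernel is $E[|h_k-\mu_k|^{2+\epsilon}]/\sigma_k^{2+\epsilon}\le\theta_k/\sigma_k^{2+\epsilon}$, so the quantity you must control after the reduction is exactly the ratio $\theta_k/\sigma_k^{2+\epsilon}$ you started with; the hypothesis only gives $\theta_k=O(\log^q n)$, not $\theta_k/\sigma_k^{2+\epsilon}=O(\log^q n)$, and by Lyapunov's inequality $\theta_k/\sigma_k^{2+\epsilon}\ge 1$ with no upper control if $\sigma_k\to 0$ along the sequence. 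The argument is therefore circular rather than a genuine reduction. For comparison, the paper's own proof simply absorbs $\sigma_k^{-(2+\epsilon)}$ into the constant $C=75\,\sigma_k^{-(2+\epsilon)}$ without discussing its possible growth --- harmless in the finite-order regime where $\sigma_k$ is a fixed positive number, but in the infinite-order regime both your write-up and the paper's implicitly require $\sigma_k$ to be bounded away from zero (equivalently, a polylogarithmic bound on the standardized moment $\theta_k/\sigma_k^{2+\epsilon}$). If you state that condition explicitly in place of the scale-invariance step, the rest of your argument goes through and coincides with the paper's.
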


The proof, provided in \ref{supp:sec3}, is based on the Berry-Esseen bound of Theorem \ref{theo:BE_bound}. To our knowledge, Theorem~\ref{theo:CLT} is the first asymptotic result in the U-statistics literature that simultaneously covers both the degenerate and non-degenerate cases, and is valid in both the \emph{finite-order regime} (where $k$ is fixed) and the \emph{infinite-order regime} (where $k$ grows with $n$), thus providing a unified theoretical framework with minimal assumptions.

However, a generic sequence of deterministic design $D^{(k)}_n$ may not respect the condition $\Delta(\mathcal{D}^{(k)}_n)=O(\log^q(n))$ required for Theorem \ref{theo:CLT} to hold. Specifically, this happens when a hypergraph $\mathcal{D}$ has an unbalanced degree distribution, as shown in the following example.

\begin{example}[Designs with unbalanced degree distribution]
     Consider a $k$-uniform star design (see \emph{Definition 1.10} in \cite{keevash2014spectral}, with $t=1$). When $k=2$, the hypergraph $\mathcal{D}^{\star}$ is a star graph with $n$ vertices. By construction, the center of the star is the index with the maximum degree $\Delta(\mathcal{D}^{\star}) = n-1$. Therefore, when $n$ diverges, an incomplete U-statistic based on the star graph violates the conditions of Theorem \ref{theo:CLT}, that guarantee an asymptotically normal distribution only when $\Delta(\mathcal{D})$ grows slowly with $n$.  
\end{example}

On the other hand, for an equireplicate design, the parameter $r$ determines the order of growth of the entire degree distribution of its associated hypergraph. This allows for considerable flexibility in the design construction as $r$ can be chosen to grow at a desired rate with $n$—such as the $O(\log^q(n))$ of Theorem \ref{theo:CLT}—and even be fixed as $n$ diverges.

In the next paragraphs, we compare Theorem~\ref{theo:CLT} with existing results in the literature, highlighting its key features in both order regimes and further justifying the choice of equireplicate designs among deterministic ones. In our simulation studies of Section~\ref{sec:simul}, we focus on the finite-order regime, which is relevant for kernel-based hypothesis testing.

\vspace{0.25cm}

\noindent \textbf{Finite-order regime.} When $k$ is fixed, we omit the subscript and the superscript $k$, referring simply to the kernel function $h$ and the design $D_n$. Thus, with minimal moment conditions on $h$ and by requiring that the maximum number of dependencies in the set $\left\{ h(S), \;S \in D_n \right\}$ grows at most with a logarithmic rate, Theorem \ref{theo:CLT} ensures asymptotic normality—even in the degenerate case—while preserving the classical computational advantages of incomplete U-statistics. This is because, any deterministic design $D_n$ satisfies $|D_n| = n \Bar{d}(\mathcal{D}_n)/k$ by equation~\eqref{eq:deter}. Thus, since we assumed that $\Delta(\mathcal{D}_n)=O(\log^q(n))$, then $|D_n| = O(n \log^q(n))$ because $\Bar{d}(\mathcal{D}_n) \leq \Delta(\mathcal{D}_n)$ by definition of the maximum degree.

Moreover, from the Berry–Esseen bound in~\eqref{eq:berry_esseen} with $p = 3$, it follows that under the assumptions of Theorem~\ref{theo:CLT}, the convergence rate to the standard normal distribution is $n^{-1/2}$, up to logarithmic factors (see the proof of Theorem~\ref{theo:CLT} for details). This rate matches the classical Berry–Esseen bound for the complete U-statistic in the finite-order non-degenerate case (see \cite{lee1990u}, ch. 3.3 and references therein), again up to logarithmic terms. It is also only slightly slower than the $o(n^{-1/2})$ rate established in \cite{korolyuk1989convergence} for the complete U-statistic in the finite-order degenerate case, where the limiting distribution is non-Gaussian. In that setting, the variance scales as $n^{k/2}$ for the complete statistic, while for the incomplete one it scales as $|D_n|^{1/2}$ (see Corollary~\ref{cor:CLT_equi}). This last result confirms—and extends to deterministic designs—the findings of \emph{Remark 3.2} in \cite{chen2019randomized}, which were stated for random designs.

Besides, existing results on normal approximations in the finite-order regime—such as \cite{chen2019randomized}—have to rely on bootstrap methods in practice to estimate the variance, which can increase the computational burden. On the contrary, our central limit theorem in \eqref{eq:clt} is readily applicable in the most extreme case of degeneracy—namely, of order $k-1$—for incomplete U-statistics based on equireplicate designs, as shown in Corollary \ref{cor:CLT_equi}. This setting is particularly relevant for hypothesis testing problems, including kernel-based tests, that we tackle in our simulation studies of Section~\ref{sec:simul}.

\begin{corollary}[CLT degenerate case of order $k-1$ for equireplicate designs]\label{cor:CLT_equi}
    
Let $D^{\dagger}_n$ be a sequence of $r$-equireplicate designs, assume that $0 = \sigma_1^2 = \cdots = \sigma_{k-1}^2$ and that all the conditions of Theorem \ref{theo:CLT} are satisfied with $k$ fixed. Then, as $n \to \infty$, we have

\begin{equation}\label{eq:clt_dege_equi}
\sqrt{\frac{n \; r_n}{k}}\;\;\frac{U_{n,D^{\dagger}_n}^{(k)} - \mu_{k}}{\sigma_k} \xrightarrow{\;\;d\;\;} \mathcal{N}(0,1) \;. 
\end{equation}

\end{corollary}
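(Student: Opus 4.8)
The plan is to derive \eqref{eq:clt_dege_equi} directly from the self-normalized central limit theorem of Theorem~\ref{theo:CLT}, using the degeneracy of order $k-1$ to evaluate $\operatorname{Var}U_{n,D^{\dagger}_n}^{(k)}$ in closed form. Since all hypotheses of Theorem~\ref{theo:CLT} are assumed (with $k$ fixed, so in particular $\Delta(\mathcal{D}^{\dagger}_n)=r_n=O(\log^q(n))$, because the hypergraph of an $r$-equireplicate design is $r$-regular), the theorem gives
\[
\frac{U_{n,D^{\dagger}_n}^{(k)} - \mu_k}{\sqrt{\operatorname{Var}U_{n,D^{\dagger}_n}^{(k)}}}\xrightarrow{\;\;d\;\;}\mathcal{N}(0,1).
\]
The remaining work is purely algebraic: I will show that the deterministic denominator $\sqrt{\operatorname{Var}U_{n,D^{\dagger}_n}^{(k)}}$ equals $\sigma_k\sqrt{k/(n r_n)}$, so that the left-hand side above is literally identical to the statistic in \eqref{eq:clt_dege_equi}. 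No Slutsky argument is needed, since we rescale by an exact constant rather than an estimate.

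The key step is the variance computation starting from \eqref{eq:var_ustat}, namely $\operatorname{Var}U_{n,D^{\dagger}_n}^{(k)} = |D^{\dagger}_n|^{-2}\sum_{c=0}^k f_c\,\sigma_c^2$. First I note that $\sigma_0^2=0$ automatically: two disjoint blocks index disjoint collections of the i.i.d.\ data, so the corresponding kernel values are independent and have zero covariance. Combined with the degeneracy hypothesis $\sigma_1^2=\cdots=\sigma_{k-1}^2=0$, every term with $c<k$ vanishes, leaving $\operatorname{Var}U_{n,D^{\dagger}_n}^{(k)} = |D^{\dagger}_n|^{-2} f_k\,\sigma_k^2$. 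Next I evaluate $f_k$: since $\mathcal{J}_c$ counts \emph{ordered} pairs (consistent with the identity $\sum_{c=0}^k f_c=|D^{\dagger}_n|^2$), and $|S_1\cap S_2|=k$ forces $S_1=S_2$ for blocks of size $k$, there are exactly $|D^{\dagger}_n|$ such pairs. Hence $f_k=|D^{\dagger}_n|$ and $\operatorname{Var}U_{n,D^{\dagger}_n}^{(k)} = \sigma_k^2/|D^{\dagger}_n|$.

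Finally I substitute the equireplicate size identity. By \eqref{eq:equirep} we have $|D^{\dagger}_n| = n r_n/k$, so $\operatorname{Var}U_{n,D^{\dagger}_n}^{(k)} = k\sigma_k^2/(n r_n)$ and therefore $\sqrt{\operatorname{Var}U_{n,D^{\dagger}_n}^{(k)}} = \sigma_k\sqrt{k/(n r_n)}$. Dividing out this constant converts the self-normalized statistic into $\sqrt{n r_n/k}\,(U_{n,D^{\dagger}_n}^{(k)}-\mu_k)/\sigma_k$, which is exactly the left-hand side of \eqref{eq:clt_dege_equi}, completing the argument.

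I do not anticipate a genuine obstacle: the result is essentially a bookkeeping consequence of Theorem~\ref{theo:CLT} once the variance collapses. The only points requiring care are (i) correctly accounting for the ordered-pair convention, so that $f_k=|D^{\dagger}_n|$ rather than a half-count over unordered pairs, and (ii) confirming that the assumed hypotheses of Theorem~\ref{theo:CLT} are compatible with the equireplicate structure---in particular that $r_n$ may grow up to the $O(\log^q(n))$ rate or remain fixed. Both are handled by the observations above.
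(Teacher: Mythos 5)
Your proposal is correct and follows essentially the same route as the paper: apply Theorem~\ref{theo:CLT}, observe that the degeneracy of order $k-1$ collapses the variance formula \eqref{eq:var_ustat} to $\sigma_k^2/|D^{\dagger}_n|$ (the paper cites the lower-bound case of Lemma~\ref{lemma:ord_var}, which is exactly this diagonal-only computation), and substitute $|D^{\dagger}_n|=nr_n/k$ from \eqref{eq:equirep}. Your explicit verification that $f_k=|D^{\dagger}_n|$ under the ordered-pair convention and that $\Delta(\mathcal{D}^{\dagger}_n)=r_n$ matches the paper's reasoning.
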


The result follows from Theorem~\ref{theo:CLT}. This is because $\Delta(\mathcal{D}_n) = r_n = O(\log^q(n))$ by assumption, and the variance simplifies to $\operatorname{Var}U_{n,D^{\dagger}_n}^{(k)} = \sigma^2_k/|D^{\dagger}_n|$ by applying the lower bound in Lemma~\ref{lemma:ord_var}. The final expression then follows by substituting $\operatorname{Var}U_{n,D^{\dagger}_n}^{(k)}$ in \eqref{eq:clt}, with $|D^{\dagger}_n|$ determined by equation~\eqref{eq:equirep}. If the kernel is further assumed to be bounded, we can allow  $r_n = O(n^{1/4})$ in \eqref{eq:clt_dege_equi}, as explained in the proof of Theorem \ref{theo:BE_bounded_kern} of \ref{supp:sec3}.  

Proposition~\ref{prop:sig_k} shows that $\sigma^2_k$ can be consistently estimated with the  sample variance $s^2_k$ calculated over the set $\left\{ h(S), \;S \in D^{\perp}_n \right\}$, which contains i.i.d. random variables that do not have indices in common. More specifically, if $k \mid n$, $D^{\perp}_n$ is an equireplicate design with $r=1$ and thus of size $n/k$ by equation~\eqref{eq:equirep}\footnote{If $k \nmid n$, $D^{\perp}_n$ would be of size $\lfloor n\rfloor/k$ where $\lfloor n\rfloor$ is the largest integer, smaller than $n$, such that $k \mid \lfloor n\rfloor$. To avoid unnecessary complications, we restrict ourselves to the case $k \mid n$ in the main text.}. For instance, fixing $n=9$ and $k=3$, we can consider the set $\left\{ h\left(X_1, X_2, X_3\right),h\left(X_4, X_5, X_6\right), h\left(X_7, X_8, X_9\right)  \right\}$ to calculate $s^2_k$. The choice of this estimator for $\sigma^2_k$ makes the implementation of \eqref{eq:clt_dege_equi} straightforward in practice, with negligible additional computational cost due to the estimation step. 

\begin{proposition}\label{prop:sig_k}
    Assume that $k \mid n$ and that all the conditions of Corollary \ref{cor:CLT_equi} are satisfied. Consider the set $\left\{ h(S), \;S \in D^{\perp}_n \right\}$, where $D^{\perp}_n$ is a sequence of $1$-equireplicate designs of size $n/k$. Let $s^2_k$ be the standard unbiased sample variance estimator calculated over $\left\{ h(S), \;S \in D^{\perp}_n \right\}$. Then $s^2_k \xrightarrow{p} \sigma^2_k$.
\end{proposition}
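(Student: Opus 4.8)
The plan is to exploit the defining property of the $1$-equireplicate design $D^{\perp}_n$, which forces its blocks to be pairwise disjoint, and then to reduce the claim to the classical consistency of the sample variance for i.i.d. data.

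First I would argue that the $n/k$ blocks of $D^{\perp}_n$ partition the index set $V = \{1,\ldots,n\}$. Since the replication parameter is $r = 1$, every index appears in exactly one block; combined with $k \mid n$ and $|D^{\perp}_n| = n/k$ from equation~\eqref{eq:equirep}, the blocks are pairwise disjoint subsets of size $k$ whose union is $V$. Consequently, for $S_1 \neq S_2$ in $D^{\perp}_n$ we have $S_1 \cap S_2 = \emptyset$, so the line graph $L(\mathcal{D}^{\perp}_n)$ contains no edges between distinct vertices. By Proposition~\ref{prop:dep_graph}, $L(\mathcal{D}^{\perp}_n)$ is the dependency graph of $\{h(S), S \in D^{\perp}_n\}$, and the absence of edges yields that these random variables are mutually independent. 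Most directly, each $h(S)$ is a measurable function of the disjoint block $\{X_i : i \in S\}$ of the i.i.d. sample, and functions of disjoint collections of independent variables are independent, so mutual independence is immediate.

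Next I would record the marginal moments. As established in Section~\ref{subsec:ustat} and at the opening of Section~\ref{sec:BE_bound}, the $h(S)$ are identically distributed with $E[h(S)] = \mu_k$ and $\operatorname{Var}[h(S)] = \sigma_k^2$; the standing assumption $\sigma_k^2 < \infty$ (inherited from Corollary~\ref{cor:CLT_equi}) together with finiteness of $\mu_k$ gives a finite second moment $E[h(S)^2] = \sigma_k^2 + \mu_k^2 < \infty$. Thus $\{h(S), S \in D^{\perp}_n\}$ is a family of i.i.d. random variables with finite variance whose cardinality $m \defeq n/k$ diverges as $n \to \infty$, since $k$ is fixed. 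The final step is the standard consistency argument: writing the unbiased estimator as
$$s^2_k = \frac{m}{m-1}\left(\frac{1}{m}\sum_{S \in D^{\perp}_n} h(S)^2 - \bar{h}^2\right),$$
where $\bar{h}$ is the sample mean over the block, I would apply the weak law of large numbers separately to the empirical second moment, giving $\tfrac{1}{m}\sum_{S} h(S)^2 \xrightarrow{p} \sigma_k^2 + \mu_k^2$, and to the sample mean, giving $\bar{h} \xrightarrow{p} \mu_k$; the continuous mapping theorem then gives $\bar{h}^2 \xrightarrow{p} \mu_k^2$, while $m/(m-1) \to 1$. Combining these via the algebra of convergence in probability yields $s^2_k \xrightarrow{p} \sigma_k^2$.

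The main obstacle is conceptual rather than technical: one must recognize that the $r=1$ equireplicate structure makes the summands genuinely independent and identically distributed, which is precisely what allows the problem to collapse onto elementary i.i.d. theory. Once disjointness of the blocks and the finiteness of the second moment are in hand, the remaining steps are entirely routine.
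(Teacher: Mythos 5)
Your proposal is correct and follows essentially the same route as the paper's proof: both reduce the problem to i.i.d.\ theory by observing that the $r=1$ equireplicate structure forces the blocks of $D^{\perp}_n$ to be disjoint (hence the $h(S)$ are independent and identically distributed with finite second moment), then decompose $s^2_k$ into the empirical second moment minus the squared sample mean and apply a law of large numbers together with the continuous mapping theorem. The only cosmetic difference is that the paper invokes the strong law and then passes to convergence in probability, whereas you use the weak law directly; this changes nothing of substance.
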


\begin{remark}[CLT for equireplicate and linear designs]\label{remark:clt_equi_lin} The variance scaling factor in Theorem~\ref{theo:CLT} becomes specific to each deterministic design if there exists a $c \in \{1, \ldots, k-1\}$ such that $f_c > 0$ and $\sigma^2_c > 0$. In this case, determining the asymptotic behavior of the incomplete U-statistic requires analyzing how the nonzero $f_c$ terms grow with the sample size. In Corollary \ref{cor:CLT_equi_lin}, we provide such a result, establishing a CLT for the class of equireplicate and linear designs. Note that Corollary \ref{cor:CLT_equi_lin} is valid for any equireplicate design when $k=2$ because they are also linear, since no pair of distinct blocks $S_1$ and $S_2$ in $\mathcal{B}_2$ can share more than one element. However, to apply Theorem~\ref{theo:CLT} in practice, it is necessary to estimate the nonzero $\sigma^2_c$ terms. This is a well-studied problem, and several methods are available in the literature, including consistent covariance estimators, jackknife, and bootstrap techniques (see ch. 5.3 of \cite{lee1990u} for an overview).
\end{remark}

\noindent \textbf{Infinite-order regime.} Recently, infinite-order U-statistics (IOUS) have attracted growing attention in the statistics and machine learning communities, particularly due to their relevance in uncertainty quantification for supervised learning ensembles (see e.g., \cite{mentch2016quantifying, peng2022rates}). Theorem \ref{theo:CLT} contributes to this increasingly active and important area of research by providing the first result on incomplete U-statistics that considers a diverging order $k$, even in the presence of degeneracy. The only related works on IOUS in the incomplete setting are \cite{SongChenKato2019}, which develops distributional approximations for high-dimensional, non-degenerate IOUS and \cite{sturma2024testing}, that extends \cite{SongChenKato2019} to a mixed-degenerate setting. However, their analyses rely on the Hoeffding decomposition, which becomes problematic in the IOUS regime. Specifically, when assuming $\sigma^2_k < \infty$, the first-order term in the decomposition—known as the \emph{Hájek projection}—vanishes as $k \to \infty$, significantly complicating the analysis. This issue arises from inequality~\eqref{eq:ineq}, which implies $k\sigma^2_1 < \sigma^2_k$. Hence, if $\sigma^2_k$ is bounded and $k$ diverges, we necessarily have $\sigma^2_1 = O(k^{-1})$. As a result, the variance of the Hájek projection shrinks to zero, and controlling the moments of an increasing number of degenerate terms becomes challenging, as noted by \cite{SongChenKato2019}.

Our approach avoids these complications as it does not rely on the Hoeffding decomposition and instead operates directly in the space induced by the kernel function $h$. Consequently, it remains valid in both degenerate and non-degenerate cases without imposing assumptions on the order of the $\sigma^2_c$ terms in equation \eqref{eq:var_ustat}. The only additional requirement in the infinite-order regime is a logarithmic growth condition on $\max\{k,\theta_k\}$, making our framework particularly interesting for the IOUS setting. Under the conditions of Theorem~\ref{theo:CLT}, the computational efficiency of incomplete U-statistics is preserved  even in the infinite-order regime, by the same reasoning outlined for the finite-order case. 

\begin{remark}
The logarithmic growth condition in Theorem~\ref{theo:CLT} can be relaxed to a polynomial growth condition, provided the polynomial degree remains sufficiently small. Indeed, allowing $\max\{k,\;\Delta(\mathcal{D}^{(k)}_n),\; \theta_k\}=O(n^{1/q})$, with $q > 22/\epsilon + 21$ and $0 < \epsilon \leq 1$, still ensures a standard Gaussian limiting distribution for the centered and rescaled incomplete U-statistics, provided that the other conditions of Theorem~\ref{theo:CLT} are satisfied (see the proof in~\ref{supp:sec3} for further details). However, imposing a logarithmic growth condition offers two advantages: it recovers the classical Berry-Esseen $n^{-1/2}$ convergence rate to the standard normal distribution—up to logarithmic factors—even in the infinite-order regime when $\epsilon=1$, and it retains linear computational complexity in $n$ for any deterministic design $D^{(k)}_n$, again up to logarithmic terms, in both finite and infinite-order settings.
\end{remark}

\begin{remark}\label{remark:var_inford}
    To apply Theorem~\ref{theo:CLT} in the infinite-order regime, we need a consistent estimator of the variance when $k$ diverges. Existing works have proposed variance estimation methods for incomplete U-statistics in the infinite-order regime, which can be applied \citep{wang2014variance,SongChenKato2019,xu_var_est_rf_2024}. However, specific details for implementation of our framework in this regime is left for future work.
\end{remark}

\section{Efficient Construction of Equireplicate Designs} \label{sec:constuct}

In this section, we highlight another major advantage of equireplicate designs: they can be constructed in linear time with respect to the design size.

For second-order incomplete U-statistics, equireplicate designs of any given size can be efficiently constructed and have minimum variance. Therefore, equireplicate designs should be preferred when $k=2$, the most common and widely used case in practice.

When $k > 2$, constructing equireplicate designs becomes challenging, as it relates to open problems in discrete mathematics. In Section \ref{subsec:equi_k>2} we present a construction which has still a linear computational complexity in the design size—when $k$ is fixed—and that allows $|D| = O(n^2)$ (see Remarks \ref{remark:supp_r} and \ref{remark:euler} for further details). This construction, based on cyclic permutations (see \cite{lee1982incomplete}, Example 7), may be of independent interest for the combinatorial design and hypergraph community.

\subsection{Construction of r-equireplicate designs when k = 2}\label{subsec:equi_k2}

For second-order incomplete U-statistics, an $r$-equireplicate design is a subset of $\mathcal{B}_2$ of size $|D|=nr/2$, such that each index appears in exactly $r$ pairs. Our approach relies on a partition of $\mathcal{B}_2$ into disjoint $1$-equireplicate designs when $n$ is even ($2 \mid n$), and into disjoint $2$-equireplicate designs when $n$ is odd ($2 \nmid n$ and thus $2 \mid r$ by equation~\eqref{eq:equirep} if the design exists). In both cases the whole partition can be generated sequentially, allowing us to obtain an $r$-equireplicate design by taking the union of a prescribed number of subsets from it. Algorithms~\ref{alg:evenDesign} and~\ref{alg:oddDesign} implement this procedure for even and odd $n$, respectively. Theorem~\ref{theo:algo_clt_equi} establishes that these algorithms have linear computational complexity in the design size and that the resulting incomplete U-statistics achieve minimum variance. Additional details are provided in~\ref{supp:sec4_k2}. That section also discusses the connections between our algorithms and factorizations of the complete graph $K^{(2)}_n$.

\begin{algorithm}[t]
    \caption{$r$-Equireplicate Design for $n$ even and arbitrary $r$}\label{alg:evenDesign}
    \begin{algorithmic}
        \Require  $n$ even and $r\in \{1,2,\ldots, n-1\}$.
        \State Set $D=\emptyset$
        \For{$g \in \{1,2,\ldots, r\}$}
        
        Set $D=D\cup\left\{\big(g,n\big)\right\}$
\For{$i \in \{1,2,\ldots, n/2-1\}$}
\State Set $D=D\cup\left\{\big(g+i\pmod{n-1},g-i\pmod{n-1}\big)\right\}$
\EndFor
        \EndFor
        \Ensure $D$
    \end{algorithmic}
\end{algorithm}

\begin{algorithm}[t]
    \caption{$r$-Equireplicate Design for $n$ odd and $r$ even}\label{alg:oddDesign}
    \begin{algorithmic}
        \Require $n$ odd and $r\in \{2,4,6,\ldots, n-1\}$.
        \State Set $D=\emptyset$
        \For{$g \in \{1,2,\ldots, r/2\}$}
\For{$i \in \{1,2,\ldots, n\}$}
\State Set $D=D\cup\left\{\big(i,i+g\pmod{n}\big)\right\}$
\EndFor
        \EndFor
        \Ensure $D$
    \end{algorithmic}
\end{algorithm}

\begin{theorem}[Equireplicate Designs with Minimum Variance when $k=2$]\label{theo:algo_clt_equi}
Let $n$ be an integer and consider the designs produced by Algorithms~\ref{alg:evenDesign} and~\ref{alg:oddDesign}.  
If $n$ is even and $r \in \{1,2,\ldots,n-1\}$, the output of Algorithm~\ref{alg:evenDesign} is an $r$-equireplicate design, $D$.  
If $n$ is odd and $r \in \{2,4,6,\ldots,n-1\}$, the output of Algorithm~\ref{alg:oddDesign} is an $r$-equireplicate design, $D$.  
In both cases, the algorithm runs in $O(nr)=O(|D|)$ time. Moreover, the variance of the corresponding incomplete U-statistic satisfies $
\operatorname{Var} U_{n,D}^{(2)} = |D|^{-1} \{ 2(r-1)\sigma_1^2 + \sigma_2^2 \}$,
which is minimal among all incomplete U-statistics with the same design size $|D|$.
\end{theorem}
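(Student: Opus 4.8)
The plan is to establish the three claims---correctness, complexity, and minimum variance---in turn, running the two algorithms in parallel wherever the combinatorics coincide. First I would show that each pass of the outer loop generates a regular spanning subgraph of $K_n^{(2)}$, and then that distinct passes are edge-disjoint, so that their union makes every index appear exactly $r$ times. For Algorithm~\ref{alg:evenDesign} (with $n$ even), a fixed $g$ yields a perfect matching: the special pair $(g,n)$ saturates the distinguished vertex $n$ together with $g$, while the pairs $\{g+i,\,g-i\}$ taken $\pmod{n-1}$ for $i=1,\dots,n/2-1$ partition the remaining residues $\{1,\dots,n-1\}\setminus\{g\}$. The partition property holds because $n-1$ is odd, so $i\mapsto g\pm i$ is injective on $\{1,\dots,n/2-1\}$, the two images are disjoint, and neither hits $g$ (as $2i\not\equiv0\pmod{n-1}$); hence each round has exactly $n/2$ edges and covers every vertex once. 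For Algorithm~\ref{alg:oddDesign} (with $n$ odd), a fixed $g$ yields the difference class $\{\{i,\,i+g\}:i=1,\dots,n\}\pmod n$, which is $2$-regular since each vertex occurs once as a left and once as a right endpoint, and its $n$ edges are distinct because $2g\not\equiv0\pmod n$.

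The hard part will be edge-disjointness across passes, which I would settle with a simple invariant. In Algorithm~\ref{alg:evenDesign} every non-special edge of round $g$ has endpoint-sum $(g+i)+(g-i)\equiv2g\pmod{n-1}$; because $\gcd(2,n-1)=1$ the map $g\mapsto2g$ is a bijection of $\mathbb{Z}_{n-1}$, so rounds with distinct $g\in\{1,\dots,r\}$ carry distinct sums and cannot share a non-special edge, while the special edges $(g,n)$ are pairwise distinct and never coincide with a non-special edge (which lives in $\{1,\dots,n-1\}$). In Algorithm~\ref{alg:oddDesign} every edge of round $g$ has endpoint-difference $\pm g\pmod n$, and for $1\le g<g'\le r/2\le(n-1)/2$ one has $g\not\equiv\pm g'\pmod n$ (since $0<g+g'<n$), so the difference classes are disjoint. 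In both cases the union of the $r$ (resp.\ $r/2$) edge-disjoint factors gives every vertex degree exactly $r$, so $|D|=nr/2$ and $D$ is $r$-equireplicate by~\eqref{eq:equirep}. Complexity is then immediate: each algorithm is two nested loops performing $O(1)$ modular arithmetic per inner step, for a total of $\Theta(nr)=\Theta(|D|)$ insertions.

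For the variance I would specialize~\eqref{eq:var_ustat} to $k=2$, using $\sigma_0^2=0$ (disjoint blocks are independent), $f_2=|D|$ (only identical blocks share both indices), and $f_1=\sum_{v\in V}d(v)(d(v)-1)$, the count of ordered pairs of distinct blocks meeting in a single common vertex. Putting $d(v)\equiv r$ gives $f_1=n\,r(r-1)$, and substituting $|D|=nr/2$ into $\operatorname{Var}U_{n,D}^{(2)}=|D|^{-2}(f_1\sigma_1^2+f_2\sigma_2^2)$ collapses, after routine simplification, to $|D|^{-1}\{2(r-1)\sigma_1^2+\sigma_2^2\}$. For minimality among all designs of the same size, I would observe that $f_2=|D|$ is fixed and $\sigma_1^2\ge0$, so it suffices to minimize $f_1=\sum_v d(v)^2-\sum_v d(v)$ subject to the handshaking identity $\sum_v d(v)=2|D|$ from~\eqref{eq:deter}; by convexity of $t\mapsto t^2$, the sum $\sum_v d(v)^2$ is minimized exactly when all degrees coincide, i.e.\ $d(v)\equiv r$, which is the equireplicate case. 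This reproduces the minimum-variance property of Theorem~1 (p.~195) of \cite{lee1990u} invoked in Section~\ref{subsec:equirep}, completing the argument.
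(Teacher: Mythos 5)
Your proof is correct, and its combinatorial core coincides with the paper's: the paper factors the correctness claim through Theorems~\ref{thm:evenPartition} and~\ref{thm:oddPartition}, whose proofs use the involution $p_g'(i)=-i+2g\pmod{n-1}$ (for $n$ even) and the difference classes $\pm g\pmod n$ (for $n$ odd) — exactly your endpoint-sum and endpoint-difference invariants, so the matching/disjointness arguments are the same up to packaging. Where you genuinely diverge is the variance claim: the paper obtains the formula $\operatorname{Var}U_{n,D}^{(2)}=|D|^{-1}\{2(r-1)\sigma_1^2+\sigma_2^2\}$ by citing Example~1 of \cite{lee1982incomplete} and the minimality by citing Theorem~1 (p.~195) of \cite{lee1990u}, whereas you re-derive both from first principles: $f_2=|D|$, $f_1=\sum_v d(v)\{d(v)-1\}=nr(r-1)$ substituted into \eqref{eq:var_ustat}, and minimality from the convexity of $t\mapsto t^2$ under the handshaking constraint $\sum_v d(v)=2|D|$ from \eqref{eq:deter}, with $\sigma_1^2\ge 0$ ensuring that minimizing $f_1$ minimizes the variance. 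Your self-contained route buys a proof that does not lean on external results (and makes transparent exactly why equireplication is the optimality condition when $k=2$), at the cost of a slightly longer argument; the paper's citations buy brevity. One cosmetic remark: when arguing that each round of Algorithm~\ref{alg:oddDesign} is a valid $2$-regular subgraph you should also note $g\not\equiv 0\pmod n$ to rule out self-loops (trivially true for $g\le (n-1)/2$), in addition to $2g\not\equiv 0\pmod n$ for distinctness of the $n$ edges; this is implicit in your setup but worth stating.
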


\begin{remark}\label{remark:compcost_k2}
The accompanying code of this paper, implements Algorithms~\ref{alg:evenDesign} and~\ref{alg:oddDesign} in a fully vectorized manner in \textsf{R}, resulting in highly efficient execution. For example, with $n = 10^6$ and $r = 10^2$, Algorithm~\ref{alg:evenDesign} completes in approximately 62 seconds, while Algorithm~\ref{alg:oddDesign} requires about 72 seconds for $n = 10^6 + 1$ with the same $r = 10^2$. The experiment was conducted on a laptop equipped with an Intel i7-6700HQ CPU @ 2.60\,GHz and 16\,GB of RAM. Additional speedups could be achieved through parallelization over either of the for-loops, although this has not yet been implemented.
\end{remark}

\subsection{Construction of r-equireplicate designs when k > 2}\label{subsec:equi_k>2}

For incomplete U-statistics of order $k$, an $r$-equireplicate design is a subset of $\mathcal{B}_k$ of size $|D|=nr/k$, such that each index appears in exactly $r$ blocks. Our approach relies on a partial partition of $\mathcal{B}_k$ into disjoint $k$-equireplicate designs (hence $k \mid r$). This partition can be generated sequentially, allowing us to obtain an $r$-equireplicate design by taking the union of a prescribed number of subsets from it. Algorithm~\ref{alg:k>2Design} implements this procedure and Theorem~\ref{theo:algo_clt_equi_k>2} shows that, for any strictly increasing sequence of natural numbers $\eta(\cdot)$, if $n > 3\;\eta(k-1)\;\{\eta(k-1) - \eta(0)\}$ and $r\in \{k,2k,\ldots,\phi(n)k \}$, then Algorithm~\ref{alg:k>2Design} constructs $r$-equireplicate designs with linear computational complexity in the design size. In both the algorithm and theorem, we denote by $\mathcal{C}_n = \{ a \in \mathbb{Z}_n| \; \text{gcd}(a,n) = 1 \}$ the set of coprimes of $n$ and with $\phi(n) = \mid \mathcal{C}_n \mid $ its cardinality, which is known as \emph{Euler's totient function}. Additional details
are provided in \ref{supp:sec4_k>2}, which also explores connections between our algorithm and factorizations of the complete $k$-uniform hypergraph $K_n^{(k)}$.

\begin{algorithm}
    \caption{$r$-Equireplicate Design for $k>2$ and $r$ multiple of $k$}\label{alg:k>2Design}
    \begin{algorithmic}
        \Require $k > 2$, $\eta(\cdot)$, $n > 3\;\eta(k-1)\;\{\eta(k-1) - \eta(0)\}$ and $r\in \{k,2k,\ldots,\phi(n)k \}$.
        \State Set $D=\emptyset$, $b = \emptyset$ and $ \mathcal{C}_{n,r} = \{a \in \{1,\ldots, r/k \}| \; \text{gcd}(a,n) = 1 \}$
        \For{$g \in \mathcal{C}_{n,r}$}
\For{$i \in \{0, 1,\ldots, n-1\}$}
\For{$j \in \{0, 1,\ldots, k-1\}$}

\State Set $b = b \cup \left\{i+g\bigl[\eta(j)- \eta(0)\bigr]\pmod{n}\right\}$

\EndFor
\State Set $D=D \cup b$ and $b = \emptyset$
\EndFor
        \EndFor
        \Ensure $D$
    \end{algorithmic}
\end{algorithm}

\begin{theorem}\label{theo:algo_clt_equi_k>2}
    [Equireplicate Designs when $k>2$]\\
  Let $k > 2$, $\eta: \{0, \ldots, k-1\} \to \mathbb{N}_0$ be any strictly increasing natural number valued sequence, $n$ be a positive integer such that $n > 3\;\eta(k-1)\;\{\eta(k-1) - \eta(0)\}$ and $r\in \{k,2k,\ldots,\phi(n)k \}$. Then, the output of Algorithm \ref{alg:k>2Design} is an $r$-equireplicate design $D$ and the runtime of Algorithm \ref{alg:k>2Design} is $O(nr)=O(|D|)$.
\end{theorem}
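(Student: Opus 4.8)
The plan is to realize every block produced by Algorithm~\ref{alg:k>2Design} as the image of a single \emph{canonical block} under an affine map of $\mathbb{Z}_n$, and then to verify the three assertions---validity of the blocks, the equireplicate property, and the runtime---by an orbit-counting argument. Writing $\delta_j \defeq \eta(j) - \eta(0)$ for $j = 0, \ldots, k-1$, the hypothesis that $\eta$ is strictly increasing gives $0 = \delta_0 < \delta_1 < \cdots < \delta_{k-1} = \eta(k-1) - \eta(0)$, and the assumption $n > 3\,\eta(k-1)(\eta(k-1)-\eta(0))$ in particular forces $\delta_{k-1} < n$. With $\Delta \defeq \{\delta_0, \ldots, \delta_{k-1}\}$, the block generated for a starter $g \in \mathcal{C}_{n,r}$ and offset $i \in \{0,\ldots,n-1\}$ is exactly $b_{g,i} = \{(i + g\delta_j)\bmod n : 0 \le j \le k-1\}$, i.e.\ the image of $\Delta$ under the affine map $x \mapsto gx + i$ on $\mathbb{Z}_n$. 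Since $\gcd(g,n)=1$ this map is a bijection of $\mathbb{Z}_n$, and this structural fact drives the whole argument.

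First I would check that each $b_{g,i}$ is a genuine element of $\mathcal{B}_k$. Because $x \mapsto gx+i$ is a bijection and the $\delta_j$ are distinct residues in $\{0,\ldots,n-1\}$, the $k$ values $i + g\delta_j$ are pairwise distinct modulo $n$, so $|b_{g,i}| = k$. Next, fixing a starter $g$ and letting $i$ range over $\mathbb{Z}_n$, I would count, for each vertex $v \in V$, the blocks of this ``$g$-family'' containing $v$: the congruence $v \equiv i + g\delta_j \pmod n$ has, for each $j$, the unique solution $i \equiv v - g\delta_j$, and these $k$ offsets are pairwise distinct (again by coprimality). The bound on $n$ guarantees that the corresponding $k$ blocks are themselves distinct---equivalently, that $\Delta$ admits no nonzero translation stabilizer in $\mathbb{Z}_n$---so $v$ lies in exactly $k$ distinct blocks of the $g$-family. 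Hence each starter contributes a $k$-uniform, $k$-regular spanning subhypergraph (a $k$-factor) on $V$, via the correspondence of Observation~\ref{obs:hyper_design}.

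To obtain the equireplicate property I would then assemble the $|\mathcal{C}_{n,r}| = r/k$ coprime starters: summing the per-starter degrees, every vertex has degree $k\cdot(r/k) = r$ in $\mathcal{D} = (V,D)$, so $\mathcal{D}$ is $r$-regular and $D$ is $r$-equireplicate, with $|D| = nr/k$ as required by~\eqref{eq:equirep}. The delicate step---and the main obstacle---is the modular non-degeneracy underlying this count: I must rule out unintended coincidences among the generated blocks, both within a single starter and, for the sharper ``partial partition into edge-disjoint $k$-factors'' statement, across distinct starters. Translating $b_{g,i} = b_{g',i'}$ into the condition that $\Delta$ is fixed setwise by the affine map $x \mapsto (g^{-1}g')\,x + g^{-1}(i'-i)$, the problem becomes a stabilizer/difference analysis for $\Delta$ in $\mathbb{Z}_n$. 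This is precisely where the quantitative hypothesis $n > 3\,\eta(k-1)(\eta(k-1)-\eta(0))$ is essential: it keeps the relevant pairwise differences $g\delta_j - g\delta_{j'}$, and the second-order comparisons between starters, from wrapping around modulo $n$, so that only the trivial coincidences survive. I expect this wraparound control, handled by a case analysis on the differences of the $\delta_j$, to be the technical crux; everything else is bookkeeping.

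Finally, the runtime is immediate from the structure of the algorithm: the three nested loops run over the $|\mathcal{C}_{n,r}| = r/k$ starters, the $n$ offsets, and the $k$ indices $j$, with $O(1)$ work (one modular multiply-add) per innermost step, giving $(r/k)\cdot n \cdot k = nr$ elementary operations; precomputing $\mathcal{C}_{n,r}$ costs a further $O((r/k)\log n)$ for the gcd tests, which is dominated. Thus the total runtime is $O(nr)$, and since $|D| = nr/k$ with $k$ fixed, this is $O(|D|)$, as claimed.
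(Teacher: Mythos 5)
Your construction is essentially the paper's: the paper also realizes each starter $g$ as a cyclic $k$-factor (its partial $k$-equireplicate partition theorem, Theorem~\ref{thm:partialPartition}), takes the union over the $r/k$ coprime starters, and reads the runtime off the three nested loops. The genuine problem is the step you yourself flag as the crux and then leave unproved: that no two generated blocks coincide, within or across starters. Two issues. First, a framing slip: you present cross-starter disjointness as needed only for a ``sharper'' partition statement, but it is needed for the theorem itself---if $b_{g,i}=b_{g',i'}$ with $g\neq g'$, the set $D$ loses a block and the vertex degrees drop below $k\cdot(r/k)=r$. Second, and more seriously, the conclusion you expect from ``wraparound control'' is false in the stated generality, because the obstruction need not involve any reduction modulo $n$: if your difference set $\Delta=\{\eta(j)-\eta(0)\}_j$ has a nontrivial \emph{integer} affine symmetry, the coincidence occurs exactly. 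Concretely, take $\eta(j)=j$, so $\Delta=\{0,1,\dots,k-1\}$ is fixed by the reflection $x\mapsto -x+(k-1)$; then $G_g$ and $G_{n-g}$ consist of exactly the same blocks. With $k=3$, $n=13$ (which satisfies $n>3\cdot 2\cdot 2=12$) and $r=36=\phi(13)\,k$, the starters are $\{1,\dots,12\}$, $G_g=G_{13-g}$ for every $g$, and each vertex lies in at most $18<36$ distinct blocks of $D$, so the output is not $36$-equireplicate. Your orbit count therefore cannot be completed as planned unless either $r/k<n/2$ (so $g$ and $n-g$ are never both starters) or $\eta$ is restricted so that $\Delta$ admits no such affine stabilizer; neither restriction appears in the hypotheses.

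You are in good company: the paper's own injectivity argument derives a determinant-type identity in the $\eta$ values and asserts that only the identity permutation satisfies it, but the reversal $\sigma(j)=k-1-j$ also satisfies that identity whenever $\eta$ is an arithmetic progression---which is precisely the coincidence exhibited above. So the gap you left open is real, it is the same soft spot in the paper's proof, and closing it requires an extra hypothesis on $\eta$ (ruling out affine symmetries of $\Delta$, as the recommended choice $\eta(j)=2^j$ does) or a cap on $r$, not merely a sharper lower bound on $n$. Your remaining steps---distinctness of elements within a block via the bijectivity of $x\mapsto gx+i$, exclusion of translation stabilizers of $\Delta$ (which the bound on $n$ genuinely does deliver, since a nonzero translation fixing $\Delta$ forces $n\le 2\,\delta_{k-1}$), the per-starter degree count, and the $O(nr)$ runtime---are all sound and match the paper.
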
 

\begin{remark}[Order of $|D|$ and choice of $\eta(\cdot)$]\label{remark:euler} 

When $\eta(j) = 2^j$ in Algorithm~\ref{alg:k>2Design}, our construction is related to the one proposed in \citet{shao2025u}. However, the order of the corresponding design size is not explicitly derived in that work. In contrast, Theorem~\ref{theo:algo_clt_equi_k>2} shows that in our setting $r = O(\phi(n) k)$, which—by equation~\eqref{eq:equirep}—implies $|D| = O(n\;\phi(n))$. Following \citet{hardy2008introduction}, $\phi(n)$ is asymptotically of order $n$ and, when $n$ is prime, $\phi(n)$ attains its maximum value of $(n-1)$.
Thus, Algorithm~\ref{alg:k>2Design} constructs an $r$-equireplicate design of size $|D| = O(n^2)$, regardless of the specific choice of $\eta(\cdot)$. However, choosing the right $\eta(\cdot)$ remains important, as it determines the degree of overlap between the blocks forming the design and thereby influences the variance of the resulting incomplete U-statistics. A theoretical analysis of this choice, quantifying its impact on the variance, is left for future work.
\end{remark}

\section{Numerical Experiments}\label{sec:simul}

In this section, we empirically validate the theoretical results established in Section \ref{subsec:CLT} regarding the asymptotic distribution of incomplete U-statistics of equireplicate designs in the finite order regime. We also investigate key aspects of our novel algorithms for constructing such equireplicate designs, introduced in Section~\ref{sec:constuct}, with particular focus on their variance-minimization property (when $k=2$) and linear computational complexity. 

To this end, we conduct two sets of simulation studies on kernel-based testing methods: one on the two-sample test based on the unbiased MMD (uMMD) statistic, which is a second-order U-statistic (i.e., covering the $k=2$ case), and the other on the independence test based on the HSIC \citep{gretton2007kernel}, which is a fourth-order U-statistic (i.e., covering the $k>2$ case). In addition, we illustrate our methodology on the widely used \emph{CIFAR-10} image classification dataset \citep{Krizhevsky09learningmultiple}, which has frequently served as a benchmark for evaluating alternatives to the standard MMD two-sample test (see e.g., \cite{liu2020learning}). In this context, our novel approach provides a permutation-free version of the MMD test, that offers substantial computational advantages.

\subsection{MMD experiments}

In the first study, we perform a series of two-sample tests using incomplete versions of the uMMD test statistic—constructed via equireplicate designs—to assess departures from normality in their asymptotic distributions. Under the null $H_0$—i.e., when the two samples are drawn from the same distribution—the complete uMMD statistic (see eq.~(6) in \cite{gretton2006kernel}) is a degenerate U-statistic and has a non-Gaussian limiting distribution. Under the alternative $H_1$, it is non-degenerate and converges to a normal (see Theorem~8 in \cite{gretton2006kernel} for details). 

More specifically, we simulate each time both samples $(X,Y)$ i.i.d. from a $\mathcal{N}(0,1)$. We vary the common sample size $n \in \{100, 200, 400,800,1600\}$. Each incomplete uMMD statistic is computed using a linear kernel and an $r$-equireplicate design generated by Algorithm~\ref{alg:evenDesign}. We select $r \in \{1,\log(n),\log^2(n),\log^3(n), n/2, n-1\}$. We obtain the empirical distribution of the standardized uMMD statistic by repeating the experiment $500$ times, standardizing the statistic in each replicate using its Monte Carlo estimate of the standard deviation. To quantify departures from normality, we compute the Kolmogorov-Smirnov (KS) distance between each empirical distribution and $\mathcal{N}(0,1)$. We then repeat the entire procedure $100$ times to produce a sampling distribution of KS distances. The left panel in Figure~\ref{fig:MMD_simstudy}, reports $95\%$ Monte Carlo confidence intervals (CI) for the KS distance (centered at the corresponding mean) and, for reference, shows $Q_{0.975}$ and $Q_{0.5}$, the $97.5\%$ and $50\%$ quantiles of the KS distance when the data are truly $\mathcal{N}(0,1)$, respectively.  

We observe that for $r \in \{\log (n),\, \log^2 (n)\}$, the KS CIs lie below $Q_{0.975}$ even at $n=100$ and quickly contract toward $Q_{0.5}$ as $n$ increases. When $r=\log^3(n)$, a larger sample size is required before the upper bound drops below $Q_{0.975}$, but the decreasing trend is evident. In contrast, for larger values of $r$ (e.g., $r=n/2$), the distance to normality remains roughly constant as $n$ grows\footnote{Note that, as expected, when $r=n-1$ the KS CI indicate a clear departure from normality. This occurs because we recover the complete uMMD, which under the null is non-Gaussian.}. These findings validate Theorem~\ref{theo:CLT} for $k=2$, which predicts that for $r=O(\log^q (n))$ (with fixed $q$), deviations from normality are small and diminish as $n$ increases. 

In practice, a Monte Carlo estimate of the uMMD standard deviation is not available. However, since the uMMD is degenerate under $H_0$, we can use the variance estimator of Proposition~\ref{prop:sig_k}. Figure~\ref{fig:MMD_simstudy_s2k} in~\ref{supp:sec5} replicates Figure~\ref{fig:MMD_simstudy} but standardizes by $s_2^2$ rather than by a Monte Carlo standard deviation. The results are virtually identical, thus using this estimator does not materially affect the KS CI as predicted by Corollary \ref{cor:CLT_equi}.

\begin{figure}[t]
    \centering
    \includegraphics[width=\linewidth]{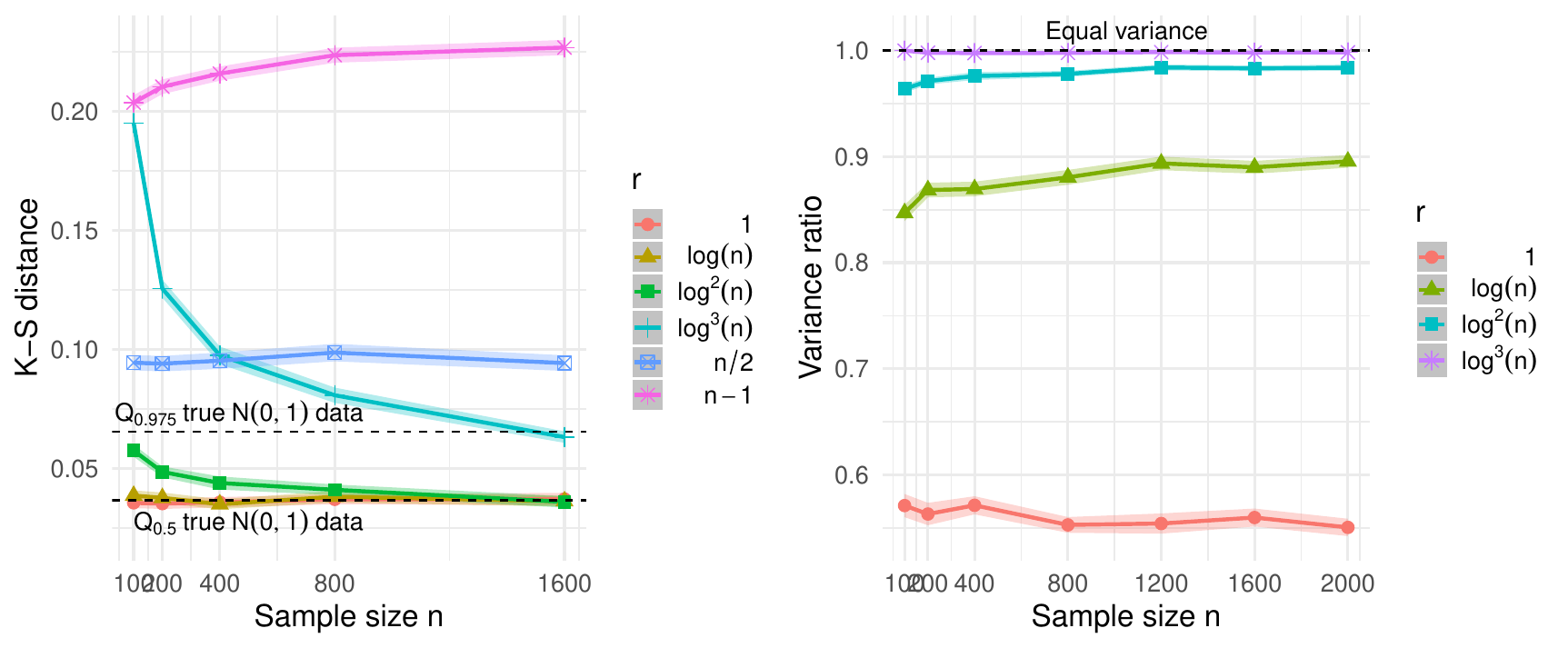}
   
    \caption{ (Left) KS distance between the empirical distribution of the standardized incomplete uMMD statistics under $H_0$ and the $\mathcal{N}(0,1)$ distribution.
(Right) Variance ratio under $H_1$ between the incomplete uMMD statistic based on equireplicate designs and that based on random designs. $95\%$ Monte Carlo CIs are included for both experiments.}
    \label{fig:MMD_simstudy}
\end{figure}

In the second study, we compare the variance of the uMMD statistic computed under equireplicate designs with that under random designs, to verify that with the equireplicate designs produced by Algorithm~\ref{alg:evenDesign} we achieve minimum variance. To this end, we perform another series of two-sample tests, this time drawing the two samples from different distributions so that the uMMD statistic is non-degenerate.

More specifically, we simulate each time $X \overset{iid}{\sim} \mathcal{N}(0,1)$ and $Y \overset{iid}{\sim} \mathcal{N}(2,1)$. We vary $n \in \{100, 200, 400,800,1200, 1600, 2000\}$ and select $r \in \{1,\log(n),\log^2(n),\log^3(n)\}$. We follow the same procedure as in the first study to obtain Monte Carlo estimates of the variance of the uMMD statistic under both equireplicate and random designs. We then take their ratio (equireplicate/random) as a measure of relative efficiency and repeat the entire experiment $100$ times to obtain a sampling distribution of variance ratios. The right panel of Figure~\ref{fig:MMD_simstudy} reports $95\%$ Monte Carlo CI for these ratios, together with a reference line at $1$, which corresponds to equal variance.
We observe that the upper bounds of all CIs lie below $1$, with a single exception at $r=\log^{3}(n)$ when $n=100$: in this case $r \approx n-1$, so the statistic is effectively the complete uMMD and the variances coincide. As $r$ increases, the efficiency gains diminish but appear to stabilize at a strictly positive level as $n$ grows. These findings validate Theorem~\ref{theo:algo_clt_equi}, confirming that the designs produced by Algorithm~\ref{alg:evenDesign} achieve the minimum-variance property.

\subsection{HSIC experiments}

Following the same approach of the first MMD study, we conduct a series of independence tests using incomplete versions of the unbiased HSIC (uHSIC) statistic—constructed via equireplicate designs—to assess deviations from normality in their asymptotic distributions. Under $H_0$—i.e., when the two samples are independent—the complete uHSIC statistic (see the $\mathrm{HSIC}_s(Z)$ definition in \cite{gretton2007kernel}) is degenerate of order $1$ and has a non-Gaussian limiting distribution. Under $H_1$, it is non-degenerate and converges to a normal (see Theorem~1 and 2 in \cite{gretton2007kernel} for details). 

More specifically, we simulate each time both samples $(X,Y)$ i.i.d. from a $\mathcal{N}(0,1)$. We choose $n \in \{200, 400,800,1600\}$. The incomplete uHSIC statistic is computed from eq.~(23) of \cite{schrab2025practical}, accounting for kernel symmetrization, using a linear kernel and an $r$-equireplicate design constructed by Algorithm~\ref{alg:k>2Design}, with $k=4$ and $\eta(j)=2^j$ (see Remark \ref{remark:euler}). We select $r \in \{1,\log(n),\log^2(n),\log^3(n)\}$\footnote{Note that $r$ must be a multiple of $k=4$. If it is not the case, we select the nearest multiple. Moreover, when $r=1$, we use the design $D_n^{\perp}$, described in Proposition \ref{prop:sig_k}.}. We obtain a sampling distribution of the KS distance between the empirical distribution of the standardized uHSIC statistic and $\mathcal{N}(0,1)$ as described in the first MMD study. The left panel in Figure \ref{fig:HSIC_simstudy}, reports $95\%$ Monte Carlo CI for the KS distance and, for reference, shows $Q_{0.975}$ as well as $Q_{0.5}$.  

We observe that for $r \in \{\log(n),\, \log^2(n),\, \log^3(n)\}$, the KS CI lie below $Q_{0.975}$ even at $n=200$ and contract rapidly toward $Q_{0.5}$ as $n$ increases. The case $r=\log^3(n)$ shows a slower decrease toward $Q_{0.5}$, although the downward trend is evident. These findings support Theorem~\ref{theo:CLT} for $k>2$ (in particular $k=4$), which predicts that for $r=O(\log^q(n))$ with fixed $q$, deviations from normality remain small and vanish as $n$ grows.

\begin{figure}[tt]
    \centering
    \includegraphics[width=\linewidth]{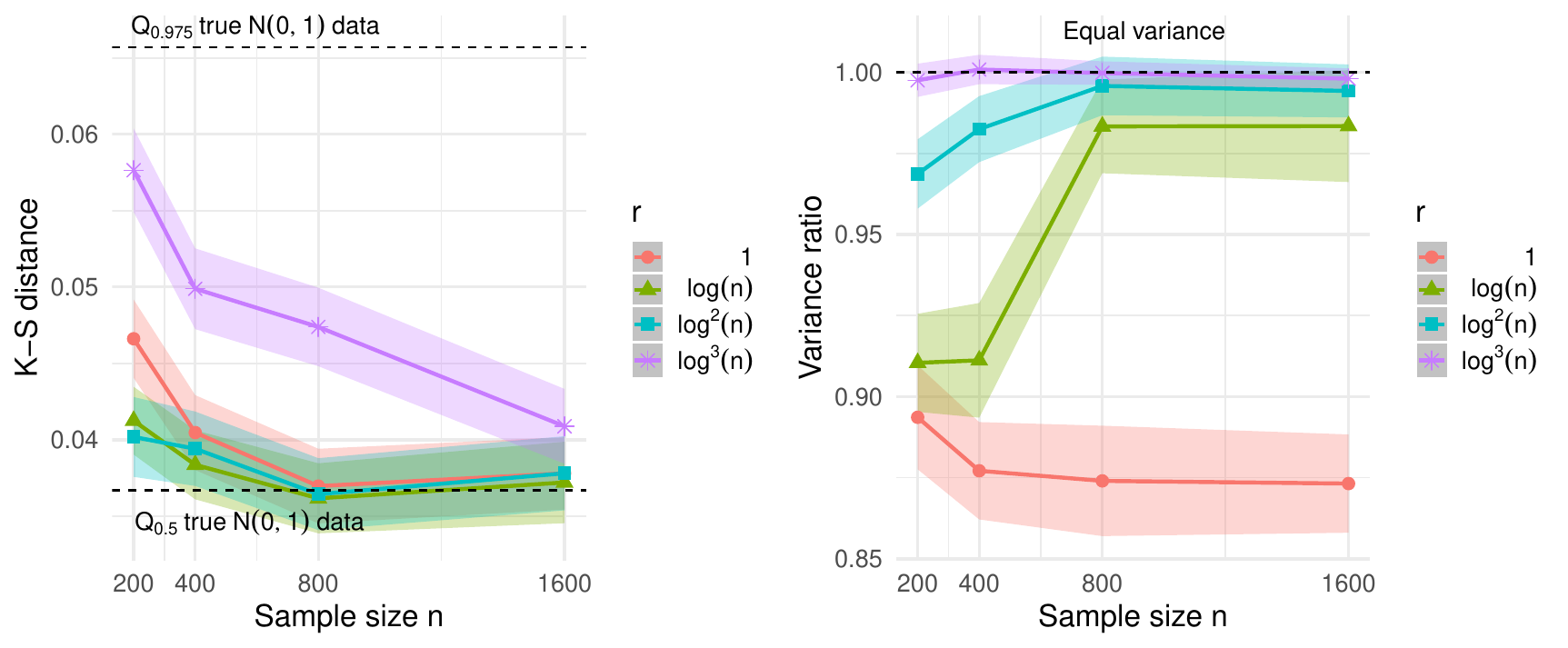}
    \caption{(Left) KS distance between the empirical distribution of the standardized incomplete uHSIC statistics under $H_0$ and the $\mathcal{N}(0,1)$ distribution.
(Right) Variance ratio under $H_1$ between the incomplete uHSIC statistic based on equireplicate designs and that based on random designs. $95\%$ Monte Carlo CIs are included for both experiments.}
    \label{fig:HSIC_simstudy}
\end{figure}

Following the same approach of the second MMD study, we compare the variance of the uHSIC statistic computed under equireplicate designs with that under random designs, to verify that incomplete U-statistics based on the equireplicate designs produced by Algorithm~\ref{alg:k>2Design} do not lose efficiency with respect to the ones based on random designs. To this end, we perform another series of independence tests, this time generating dependent samples so that the uHSIC statistic is non-degenerate.
 
More specifically, we simulate each time $X \overset{iid}{\sim} \mathcal{N}(0,1)$ and $Y = 0.5 \sin{(X)} + \sqrt{3/4} E$ with $E \overset{iid}{\sim} \mathcal{N}(0,1)$. We vary $n \in \{200, 400,800, 1600\}$ and select the replication parameter  $r \in \{1,\log(n),\log^2(n),\log^3(n)\}$. We follow the same procedure as in the second MMD study to obtain a sampling distribution of the ratio between the Monte Carlo variance estimates of the uHSIC statistic under equireplicate and random designs. The right panel of Figure~\ref{fig:HSIC_simstudy} reports $95\%$ Monte Carlo CI for these ratios (equireplicate/random), together with a reference line at $1$, which corresponds to equal variance.

There is no evidence of efficiency loss for equireplicate designs relative to random designs, since for every value of $r$ the confidence intervals intersect values below $1$. In contrast, we observe efficiency gains for $r \in \{1, \log(n), \log^2(n)\}$, particularly at moderate sample sizes. Consistent with the second MMD study, these gains diminish as $r$ increases. Based on these results, future research could investigate whether the designs produced by Algorithm~\ref{alg:k>2Design} are indeed more efficient than random designs.

\begin{figure}[t]
    \centering
    \includegraphics[width=\linewidth]{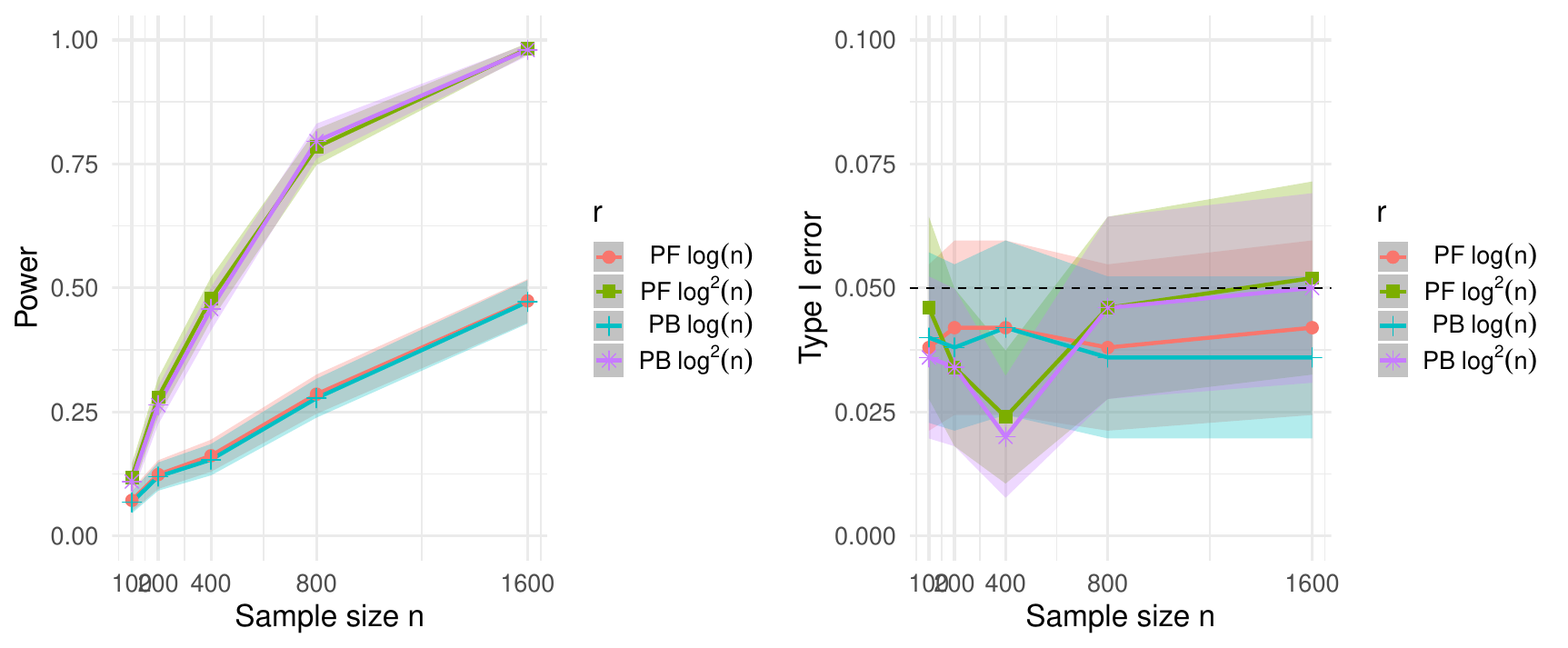}
    \caption{$95\%$ Monte Carlo CI for the power (left) and type I error (right) of the permutation-free (PF) version of the MMD test compared with its permutation-based (PB) counterpart, both evaluated on CIFAR-10 for different values of $n$ and $r$.}
    \label{fig:realdata}
\end{figure}

\subsection{Real data example: CIFAR-10 dataset}
We compare our permutation-free (PF) version of the MMD two-sample test against its standard permutation-based (PB) counterpart in terms of power, type I error, and runtime. Both methods rely on the same incomplete uMMD test statistic—built using equireplicate designs—to test for distributional differences between two balanced stratified samples of CIFAR-10 images. This dataset contains $60000$ color images of size $32 \times 32$ pixels, evenly distributed across $10$ mutually exclusive classes—airplane, automobile, bird, cat, deer, dog, frog, horse, ship, and truck—with 6000 images per class.

\begin{table}[t]
\centering
\begin{tabular}{ccccc}
\toprule
$n$ & PF $\log(n)$ & PB $\log(n)$ & PF $\log(n)^2$ & PB $\log(n)^2$ \\
\midrule
100  & $\mathbf{0.08} \pm 0.00$ & $\mathbf{39.35} \pm 0.14$  & $\mathbf{0.17} \pm 0.00$ & $\mathbf{170.18} \pm 0.44$ \\
200  & $\mathbf{0.09} \pm 0.00$ & $\mathbf{88.71} \pm 0.28$  & $\mathbf{0.41} \pm 0.00$ & $\mathbf{428.01} \pm 1.07$ \\
400  & $\mathbf{0.19} \pm 0.00$ & $\mathbf{185.95} \pm 0.46$ & $\mathbf{1.05} \pm 0.00$ & $\mathbf{1087.22} \pm 2.70$ \\
800 & $\mathbf{0.46} \pm 0.00$ & $\mathbf{438.25} \pm 0.91$ & $\mathbf{2.63} \pm 0.01$ & $\mathbf{2730.95} \pm 6.61$ \\
1600 & $\mathbf{1.16} \pm 0.01$ & $\mathbf{1060.24} \pm 1.84$ & $\mathbf{6.98} \pm 0.02$ & $\mathbf{7025.54} \pm 13.32$ \\
\bottomrule
\end{tabular}
\caption{Runtime in seconds (mean $\pm$ 95\% CI half-width), rounded to two decimal places, for different values of $n$ and $r \in \{\log(n), \log^2(n)\}$ for the PF and PB MMD tests.}
\label{tab:runtime}
\end{table}

More specifically, we sample without replacement 
$X = \{X_{\text{cat}},\, X_{\text{deer}},\, X_{\text{ship}},\, X_{\text{truck}}\}$ and  
$
Y = \{Y_{\text{airplane}},\, Y_{\text{automobile}},\, Y_{\text{dog}},\, Y_{\text{horse}}\}$, where each class contributes equally, i.e. $|X_{\text{class}}| = |Y_{\text{class}}| = n/4$. In total, we use 8 classes from the CIFAR-10 dataset—4 animals (cat, deer, dog, horse) and 4 vehicles (airplane, automobile, ship, truck)—split evenly between $X$ and $Y$, which yields a final population of $N=48000$ images. We vary the common sample size $n \in \{100, 200, 400,800,1600\}$. For both methods, each incomplete uMMD statistic is computed using a Gaussian kernel with bandwidth selected via the standard median heuristic, and an $r$-equireplicate design generated by Algorithm~\ref{alg:evenDesign}. For the PF test, we use the variance estimator of Proposition~\ref{prop:sig_k}. We select $r \in \{\log(n),\log^2(n)\}$, fix $B=1000$ permutations for the PB test and choose $\alpha = 0.05$. We repeat the experiment $500$ times to obtain Monte Carlo estimates of power, type~I error (with $X$ and $Y$ drawn from the same distribution), and the empirical distribution of runtimes. The left panel of Figure~\ref{fig:realdata} shows $95\%$ Monte Carlo CI for power, while the right panel reports the corresponding CI for type~I error. Table~\ref{tab:runtime} summarizes mean runtimes (in seconds) along with the half-width of the $95\%$ CI.

We find no evidence of a loss of power for the PF MMD test relative to the PB version. For both methods, increasing $r$ from $\log(n)$ to $\log^2(n)$ yields substantial power gains 
without sacrificing type~I error control (see right panel of Figure~\ref{fig:realdata}). Across all $n$ and $r$, both tests maintain level~$\alpha$, though they are slightly conservative for smaller sample sizes—especially when $r=\log^2(n)$—with PF less conservative than PB. From $n=800$ onward, both exhibit 
size~$\alpha$ behavior. These results support the validity of Corollary~\ref{cor:CLT_equi}, 
of which the PF MMD test is a special case.  In terms of computation, the speedup is substantial: the PF test runs about $1000\times$ faster 
than PB, while retaining both power and validity. More broadly, these findings confirm the expected computational gains are proportional to $B$, the number of permutations.

\section{Conclusion}
In this work, we introduced a novel characterization of the dependence structure of a U-statistic 
via its dependency graph. This perspective allowed us to derive a new Berry–Esseen bound that 
applies to all incomplete U-statistics based on deterministic designs, establishing conditions for 
Gaussian limiting distributions even in degenerate cases and when the order diverges. We further 
developed efficient algorithms for constructing incomplete U-statistics using equireplicate designs, 
a subclass of deterministic designs that, for second-order U-statistics, achieve minimum variance. 
All theoretical results have been validated through extensive numerical experiments. Finally, applying our framework to kernel-based testing, we proposed a permutation-free version of the MMD two-sample 
test, which—as shown by our real data example—delivers substantial computational gains while preserving both power and type I error control.

An important direction for future work is to extend our results to incomplete U-statistics 
with random designs, particularly for the case $k>2$. 
In addition, future research may investigate whether $\Delta(\mathcal{D}_n)$ can grow at larger polynomial rates and still maintain asymptotic normality. In a complementary direction, it would be interesting to move beyond the classical incomplete U-statistic framework by exploiting the full information contained in a partition of $\mathcal{B}_2$ into disjoint equireplicate designs. Indeed, appropriate aggregation of the resulting statistics could allow one to retain asymptotic normality while leveraging all available pairwise interactions. In the specific case of the MMD two-sample test, this type of aggregation may lead to power improvements and could be compared with the current state-of-the-art permutation-free MMD test of \citet{shekhar2022permutation}, which also relies on $O(n^2)$ terms. Moreover, inspired by the work of \cite{janson2021clt_mdependent} 
on $m$-dependent processes, we conjecture that a Lindeberg-type condition may suffice 
to obtain the conclusions of Theorem~\ref{theo:CLT} under only a finite second moment 
assumption on the kernel. Further work is also needed in the infinite-order regime: both to evaluate the applicability of existing variance estimators within our framework (see Remark~\ref{remark:var_inford}) and to develop new ones tailored to the equireplicate design construction of Algorithm~\ref{alg:k>2Design}. More broadly, an interesting challenge is to extend our framework to dependent U-statistics \citep{dehling2002dependent}, which frequently arise in applications, including the analysis of time-series and network data.

\ifanonymous
\else
\section*{Acknowledgments and Funding}
This work was supported in part by NSF grant SES-2150615. We also acknowledge the support of Purdue University, where both authors were affiliated at the beginning of this project.
The clusters at the Center for Research Computing and Data at the University of Pittsburgh were used to perform the numerical experiments in this paper. In preparing this work, OpenAI models were used to proofread specific paragraphs and refine code. The authors subsequently reviewed and edited all AI-assisted content, and take full responsibility for the integrity of the manuscript.
\fi

\section*{Disclosure Statement}

The authors report there are no competing interests to declare.

\bibliographystyle{apalike}
\bibliography{biblio}

\renewcommand{\thesection}{S\arabic{section}}
\renewcommand{\thetheorem}{S\arabic{theorem}}
\renewcommand{\thelemma}{S\arabic{lemma}}
\renewcommand{\thecorollary}{S\arabic{corollary}}
\renewcommand{\theproposition}{S\arabic{proposition}}
\renewcommand{\thedefinition}{S\arabic{definition}}
\renewcommand{\theexample}{S\arabic{example}}
\renewcommand{\theremark}{S\arabic{remark}}
\renewcommand{\theobservation}{S\arabic{observation}}

\section{Introduction}\label{supp:sec1}
 
\textbf{Related Work (extended version).} In his seminal work, \cite{blom1976some} introduces incomplete U-statistics, analyzes their finite-sample and asymptotic variance, and establishes conditions for asymptotic normality in the \emph{non-degenerate} case. He is also the first to suggest methods for constructing minimum variance designs, such as Latin squares and Graeco-Latin squares. \cite{brown1978reduced} derive asymptotic properties for incomplete U-statistics of second order, under an equireplicate design structure. They are the first to incorporate graph-theoretic language in their proofs. However, they do not consider the case of an odd replication parameter when the sample size $n$ is even, and the parameter is not allowed to grow with $n$. \cite{weber1981incomplete} and \cite{oneil1993asymptotic} show that in the \emph{degenerate} case, the limiting behavior can be either standard or non-standard, depending on the choice of the design. \cite{lee1982incomplete} studies the problem of choosing minimum variance designs for incomplete U-statistics. \cite{janson1984asymptotic} provides a comprehensive treatment of the asymptotic distribution of incomplete U-statistics of order $k$, considering both random and deterministic designs. However, he does not investigate specifically the class of equireplicate designs.  In general, all previously mentioned works—being theoretical in  nature—lack a practical procedure for constructing minimum variance designs. On the other hand, \cite{rempala2003incomplete} and \cite{kong2021design} propose asymptotically efficient incomplete U-statistics constructions, but they do not provide a finite sample analysis.  Existing works addressing convergence rates to limiting distributions of incomplete U-statistics are: \cite{rinott1997coupling}, which examine a Markov-type dependence framework with applications to incomplete U-statistics; \cite{chen2019randomized}, which consider randomized incomplete U-statistics in high-dimensional settings; \cite{sturma2024testing} which extend \cite{chen2019randomized} to a \emph{mixed degenerate} setting with applications to testing a null hypothesis defined by equality and inequality constraints; \cite{kim2024dimension}, which propose a new class of incomplete U-statistics that enable valid inference regardless of how the dimension scales with the sample size; \cite{shao2025u} that develop higher-order approximations for the sampling distribution of studentized non-degenerate incomplete U-statistics; and \cite{leung2026berry}, which analyzes incomplete U-statistics of random designs—generated via Bernoulli sampling—across different regimes relating the sample size $n$ and the expected design size. Among these prior works, none has provided a comprehensive framework encompassing: (i) finite-sample results on the distance to normality that account for both degenerate and non-degenerate cases; (ii) asymptotic analyses allowing the order $k$ to grow with $n$ and the design size to increase superlinearly in $n$; and (iii) efficient algorithms for constructing minimum-variance designs when $k=2$.

\section{Background and Notation}\label{supp:sec2}

\subsection{Minimum variance designs}\label{app:min_var}

Being equireplicate is sufficient for minimum variance designs only in the special case $k=2$ (see Theorem~1 on page~195 of \cite{lee1990u}). For $k>2$, additional structural constraints become necessary, as the intersection sizes among the subsets in the design can no longer be adequately controlled. This is why, in the combinatorial design literature, researchers impose additional conditions, beyond the equireplicate one, in order to obtain more balanced designs. For instance, any BIBD must satisfy the condition $r(k-1) = \lambda (n-1)$, where the parameter $\lambda$ represents the number of design elements in which each distinct pair of elements from $V$ appears together. This additional constraint ensures balance not only at the level of individual elements of $V$, but also among pairs of elements. However, explicit constructions of BIBDs are known only for specific cases e.g., when $k=3$, we have \emph{Steiner triple systems} (see ch. 12 in \cite{wallis2016introduction}) and certain specific families of designs described in \cite{sprott1954note}. Constructing BIBDs for general values of $k$, on the other hand, remains a notoriously challenging problem. Another approach is to construct a design such that $f_c = 0$ for all $c \in \{2, \ldots, k\}$ i.e., to require that the intersection between any two distinct elements of $D$ contains at most one element. If an equireplicate design satisfies this property, then by Theorem 2, page 196 in \cite{lee1990u}, it attains minimum variance. In the main text, we refer to these type of designs as equireplicate and \emph{linear} designs (see Remark \ref{remark:be_equi_lin} for further details). One way to practically build these designs is shown in Example 7 of \cite{lee1982incomplete}. Another particularly interesting way, was recently introduced in \cite{shao2025u}-Section 3.1-where the authors provide a novel approach to avoid unwanted overlaps among the blocks of the design.

\subsection{Adjacency matrix of the line graph of an hypergraph}\label{app:adj_mat}

In this paragraph, we introduce the adjacency matrix of $L(H)$, the line graph of the hypergraph $H=(V,E)$. In particular, we denote it by $A_{L(H)}\in \{0,1 \}^{|E| \times |E|}$ and define it as:
$$
(A_{L(H)})_{i,j} =
\begin{cases}
1, & \text{if } |e_i \cap e_j| \neq 0, \\
0, & \text{otherwise}.
\end{cases}
$$

To give a concrete example, consider $K_{n}^{(k)}$ and its line graph $L(K_{n}^{(k)})$. Then $A_{L(K_{n}^{(k)})} \in \{0,1 \}^{\binom{n}{k} \times \binom{n}{k}}$ and the matrix indicates whether any given pairwise intersection among subsets of size $k$ of $V$ is empty or not.  In the same way, we can consider the hypergraph of the design $\mathcal{D} = (V,D)$, and denote its line graph with $L(\mathcal{D})$. Then, its adjacency matrix $A_{L(\mathcal{D})} \in \{0,1 \}^{m \times m}$ is a sub-matrix of $A_{L(K_{n}^{(k)})}$ that indicates whether any given two elements of the design $D$ share at least one index or not.

\subsection{Equireplicate designs and \texorpdfstring{$k$}{k}-uniform, \texorpdfstring{$r$}{r}-regular hypergraphs}\label{app:equi_hyper}

\begin{observation}\label{obs:equi_hyper_rep}
The hyperedge set of any $k$-uniform, $r$-regular hypergraph defines an $r$-equireplicate design on the vertex set $V$, and conversely, any $r$-equireplicate design with blocks of size $k$ corresponds to the hyperedge set of a $k$-uniform, $r$-regular hypergraph.
\end{observation}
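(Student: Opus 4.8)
The plan is to reduce the statement to the already-established correspondence of Observation~\ref{obs:hyper_design}, so that only the matching of the two regularity conditions remains to be checked. Observation~\ref{obs:hyper_design} tells us that the hyperedge set $E$ of a $k$-uniform hypergraph $H=(V,E)$ is exactly a deterministic design with blocks of size $k$, and conversely. The sole additional content of the present statement is that, under this identification, the hypergraph being $r$-regular is equivalent to the design being $r$-equireplicate. Thus I would first invoke Observation~\ref{obs:hyper_design} to fix the bijection between hyperedges and blocks, and then argue about degrees and replication numbers separately in each direction.

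For the forward direction, I would start from a $k$-uniform, $r$-regular hypergraph $H=(V,E)$ and let $D=E$ be the associated design. The crucial observation is that, under the bijection, the set $H(v)$ of hyperedges containing a vertex $v$ is precisely the collection of blocks of $D$ in which the index $v$ appears; hence the hypergraph degree $d(v)=|H(v)|$ equals the replication number of index $v$ in the design. Regularity, $d(v)=r$ for all $v\in V$, therefore says exactly that every index occurs in precisely $r$ blocks, which is the defining property of an $r$-equireplicate design. For the converse, I would run the same identity in reverse: starting from an $r$-equireplicate design $D$, Observation~\ref{obs:hyper_design} yields a $k$-uniform hypergraph with $E=D$, and the equireplicate property forces $d(v)=r$ for every vertex, i.e.\ $r$-regularity.

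Finally, I would verify that the cardinality constraints are mutually consistent, which doubles as a sanity check that the two notions of $r$ coincide. Applying the handshaking identity~\eqref{eq:handshaking_kuni} to an $r$-regular, $k$-uniform hypergraph gives $|E|\,k=\sum_{v\in V}d(v)=n\,r$, which is exactly the equireplicate constraint~\eqref{eq:equirep} for $|D|=|E|$. Equivalently, this is the specialization of~\eqref{eq:deter} to the regular case, where the average degree $\bar d(\mathcal{D})$ equals the common value $r$.

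There is no genuinely hard step here: the result is a definitional translation layered on top of Observation~\ref{obs:hyper_design}. If anything is delicate, it is ensuring that the notion of \emph{vertex degree} in the hypergraph is identified with the \emph{replication number} in the design under one and the same bijection, so that the single parameter $r$ carries over without ambiguity; the handshaking count above confirms this alignment.
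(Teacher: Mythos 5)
Your proposal is correct and follows essentially the same route as the paper's justification: both reduce to the correspondence of Observation~\ref{obs:hyper_design} and then identify the vertex degree $d(v)=|H(v)|$ with the replication number of the index $v$, so that $r$-regularity and $r$-equireplication translate into one another in both directions. The paper likewise closes by noting that equation~\eqref{eq:equirep} is the handshaking identity~\eqref{eq:handshaking_kuni} specialized to the $r$-regular case, exactly as in your consistency check.
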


 The above holds true because, as discussed in \ref{subsec:hyper}, the hyperedge set of a $k$-uniform hypergraph, being a subset of $\mathcal{B}_k$ by definition, identifies uniquely a design $D$ on the vertex set $V$. If the $k$-uniform hypergraph is also $r$-regular, then it means that each vertex $v \in V$ appears exactly $r$ times in the collection of all hyperedges. Thus, the corresponding design $D$ must be $r$-equireplicate since the vertices are the indices and occur in the same number of blocks i.e., elements of the design. The converse is true for the reverse reasoning. Because of this equivalence, equireplicate designs are also known as \emph{regular designs}. Moreover, equation (\ref{eq:equirep}) can now be interpreted as an extension of the classical \emph{handshaking lemma}—that relates the number of edges in a graph with the sum of the degrees—for $k$-uniform, $r$-regular hypergraphs.

\section{Normal Approximations of Incomplete U-statistics}\label{supp:sec3}

\subsection{Proof of Proposition \ref{prop:dep_graph}}

\begin{proof}
  As explained in section \ref{sec:back_not}, the design $D$ of $U_{n,D}^{(k)}$ is the vertex set of the line graph $L(\mathcal{D}) = (D,E)$ and $S$ is a generic element of $D$ i.e., a block of the design, which corresponds to an hyperedge of the hypergraph $\mathcal{D}$. Consider any pair of disjoint sets $\Gamma_1$ and $\Gamma_2$ in $D$ such that no edge in $E$ has one endpoint in $\Gamma_1$ and the other in $\Gamma_2$. This is equivalent to impose that, given any two hyperedges $S_1$ and $S_2$ such that $S_1 \in \Gamma_1$ and $S_2 \in \Gamma_2$, no edge connects them (note that $S_1 = S_2$ is ruled out since $\Gamma_1$ and $\Gamma_2$ are disjoint). However, by definition of the line graph of an hypergraph, if there is not an edge between two distinct hyperedges $S_1, S_2 \in E$, then $|S_1 \cap S_2| = 0$. Moreover, this implies that the two distinct hyperedges $S_1, S_2$ cannot have an index $i \in V$ in common. But then, since $X_1, \ldots, X_n$ are i.i.d. random variables indexed by $V = \{1, \ldots, n \}$ by assumption, this means that $h(S_1)$ must be independent of $h(S_2)$ because dependence can happen if and only if $h(S_1)$ and $h(S_2)$ share at least an index. Thus, the sets of random variables $\left\{ h(S), \;S \in \Gamma_1 \right\}$ and $\left\{ h(S), \;S \in \Gamma_2 \right\}$ are independent. 
  Therefore, $L(\mathcal{D}) = (D,E)$ is the dependency graph of $\left\{ h(S), \;S \in D \right\}$, as all required conditions have been verified.
\end{proof}

\subsection{Proof of Lemma \ref{lemma:maxdeg}}

\begin{proof}
      We start from the upper bound. By definition, see subsection \ref{subsec:hyper}, $L(\mathcal{D})$ is a graph, with vertex set that coincides with the hyperedge set of $\mathcal{D}$ and where an edge connects two vertices if and only if the corresponding hyperedges in $\mathcal{D}$ i.e., the blocks of the design, have at least one vertex i.e., index, in common. Now, take a generic vertex $v \in V$ and consider an hyperedge $e \in D$ such that $v \in e$. Since $\Delta(\mathcal{D})$ is the maximum degree of $\mathcal{D}$, then each $v$ can appear at most in exactly $\Delta(\mathcal{D})$ hyperedges. Furthermore, the index $v \in e$ can contribute at most for $(\Delta(\mathcal{D})-1)$ edges in $L(\mathcal{D})$. This because there are $(\Delta(\mathcal{D})-1)$ hyperedges left which have the index $v$ in common with $e$. But $\mathcal{D}$ is $k$-uniform, so in each $e \in E$ there are exactly $k$ indices, each contributing at most for $(\Delta(\mathcal{D})-1)$ edges. To conclude the proof of the upper bound, we just need to consider that each vertex of $L(\mathcal{D})$ has a loop i.e., a self-edge, by definition (see subsection \ref{subsec:hyper}). Thus the maximum degree $\Delta(L(\mathcal{D}))$ must be smaller or equal to $k\;(\Delta(\mathcal{D})-1) + 1$. The upper bound is met when the hypergraph $\mathcal{D}^{\diamond} = (V, D^{\diamond})$ is both $r$-regular and \emph{linear}, meaning that for all $e_1, e_2 \in D^{\diamond}$, with $e_1 \neq e_2$, we have that $\left|e_1 \cap e_2\right| \leq 1$. We prove this result by contradiction. Suppose that $\Delta(L(\mathcal{D}^{\diamond})) \neq k\;(r-1) + 1$, then, by the previous result on the upper bound, we know that $\Delta(L(\mathcal{D}^{\diamond})) < k\;(r-1) + 1$. This implies that there exist at least two distinct indices $v_1, v_2 \in V$ that belong to two distinct hyperedges $e_1,e_2 \in D^{\diamond}$. If this was not the case, then each $v \in V$ would contribute exactly for $(r-1)$ edges in $L(\mathcal{D^{\diamond}})$—avoiding overlaps—because $\mathcal{D}^{\diamond}$ is $r$-regular and, since $\mathcal{D}^{\diamond}$ is also $k$-uniform, the line graph would be $k\;(r-1)+1$-regular, taking into account that each vertex of $L(\mathcal{D}^{\diamond})$ has a loop i.e., a self-edge, by definition (see subsection \ref{subsec:hyper}). But this means that there exist at least two distinct hyperedges $e_1, e_2 \in D^{\diamond}$ such that $\left|e_1 \cap e_2\right| > 1$ which contradicts the fact that the hypergraph $\mathcal{D}^{\diamond}$ is linear. Thus, $\Delta(L(\mathcal{D}^{\diamond})) = k\;(r-1)+1$. Moreover, since two distinct indices $v_1, v_2 \in V$ that belong to two distinct hyperedges $e_1,e_2 \in D^{\diamond}$ cannot exist, by the previous reasoning $L(\mathcal{D}^{\diamond})$ must also be $k\;(r-1)+1$-regular.
      
      For the lower bound, we need to do the opposite reasoning. Take $v \in V$ such that $d(v) = \Delta(\mathcal{D})$ and consider an hyperedge $e \in D$ such that $v \in e$. Since $\Delta(\mathcal{D})$ is the maximum degree of $\mathcal{D}$, $v$ will appear in exactly $\Delta(\mathcal{D})$ hyperedges in $D$. Furthermore, the index $v \in e$ contributes for $(\Delta(\mathcal{D})-1)$ edges in $L(\mathcal{D})$. This because there are $(\Delta(\mathcal{D})-1)$ hyperedges left which have the index $v$ in common with $e$. But $\mathcal{D}$ is $k$-uniform, so there are $k-1$ indices left which can contribute to the degree of $e$. Since we want to prove a lower bound, we just need to impose that each of the remaining $k-1$ indices does not contribute at all to the degree of $e$, seen as a vertex of $L(\mathcal{D})$. To conclude the proof of the lower bound, note that each vertex of $L(\mathcal{D})$ has a loop i.e., a self-edge, by definition (see subsection \ref{subsec:hyper}). Thus the maximum degree $\Delta(L(\mathcal{D}))$ must be greater or equal to $\Delta(\mathcal{D})$. The lower bound is met by the $k$-uniform star, which is the $t=1$ case in definition 1.10 of \cite{keevash2014spectral}. We show this for $k=2$ i.e., when the hypergraph $\mathcal{D}$ is a star graph with $n$ vertices. The center of the star is the vertex $v^{\star} \in V$ with the maximum degree $\Delta(\mathcal{D}) = n-1$. Now, consider an edge $e$ such that $v^{\star} \in e$. There are exactly $n-2$ edges left in $D$ that have the index $v^{\star}$ in common with $e$ and the remaining index in $e$ does not contribute at the degree of $e$ by construction. Thus, $\Delta(L(\mathcal{D})) = n-1$ because we add the self-edge to the previous count of $n-2$ edges.   
\end{proof}

\subsection{Proof of Theorem \ref{theo:BE_bound}}

\begin{proof}

We start by substituting equations \eqref{eq:IUk_red} and \eqref{eq:var_ustat} in the expression of the centered and standardized incomplete U-statistics of order $k$ to obtain the following result:

\begin{equation*}
\begin{aligned}
    \frac{U_{n,D}^{(k)} - \mu_{k}}{\sqrt{\operatorname{Var}U_{n,D}^{(k)}}} &= \frac{|D|^{-1}\left(\sum_{S \in D} h(S) - |D| \;\mu_k\right)}{\sqrt{|D|^{-2} \sum_{c=0}^k f_c \;\sigma_c^2}} \\
     &= \sum_{S \in D} \frac{h(S) - \mu_k}{\sqrt{\sum_{c=0}^k f_c \;\sigma_c^2}} = \sum_{S \in D} Y_S \;,
\end{aligned}
\end{equation*}

  where we denote by $Y_S$, the centered and rescaled version of $h(S)$ for all $S \in D$. We assumed that $0 <\sigma^2_k < \infty$. This implies, by inequality~\eqref{eq:ineq} and identity $\sum_{c=0}^k f_c = |D|^{2}$, that $0<\sqrt{\sum_{c=0}^k f_c \;\sigma_c^2} < \infty$ for a fixed design size. Moreover, there exists $2<p \leq 3$ such that $E\left[\left|h(S) - \mu_k\right|^p \right] \leq \theta$, therefore we can conclude that:

    \begin{equation}\label{eq:m_bound}
     E\left[\left|Y_S\right|^p \right] = E\left[\left|\frac{h(S) - \mu_k}{\sqrt{\sum_{c=0}^k f_c \;\sigma_c^2}}\right|^p \right] \leq \frac{\theta}{\left(\sum_{c=0}^k f_c \;\sigma_c^2\right)^{\frac{p}{2}}}  \;\; \;.
    \end{equation}

At this point, we note that $L(\mathcal{D})$ is the dependency graph of the set of random variables $\{Y_S, \;S \in D\}$. This is because we just subtracted and divided by the same constants all the random variables in the set $\{h(S), \;S \in D\}$, without changing their dependency structure. Consequently, Proposition \ref{prop:dep_graph} holds as well for the set $\{Y_S, \;S \in D\}$. Now, we have that:

\begin{enumerate}[label=\roman*)]
    \item $\{Y_S, \;S \in D\}$ are random variables indexed by the vertices of $L(\mathcal{D})$,
    \item by construction, $E[Y_S] = 0$ for all $S \in D$, 
    \item $E\left[\left(\sum_{S \in D}Y_S\right)^{2}\right] = 1$ and
    \item $E\left[\left|Y_S\right|^p \right] \leq \left(\frac{\sqrt[p]{\theta}}{\sqrt{\sum_{c=0}^k f_c \;\sigma_c^2}}\right)^p$ by \eqref{eq:m_bound} for all $S \in D$ because $Y_S$ are identically distributed.
\end{enumerate}

Therefore, all the conditions of \emph{Theorem 2.7} in \cite{chen2004normal} are satisfied, and we can conclude that:

\begin{equation*}
\sup _z\left|P\left(\sum_{S \in D} Y_S \leq z\right)-\Phi(z)\right| \leq 75 \; \Delta(L(\mathcal{D}))^{5(p-1)}\; \;|D| \; \frac{\theta}{\left(\sum_{c=0}^k f_c \;\sigma_c^2\right)^{\frac{p}{2}}}  \;\;\;. 
\end{equation*}
 
At this point, we make the following substitutions in the previous expression:

\begin{enumerate}[label=(\alph*)]
\item the tight upper bound of Lemma \ref{lemma:maxdeg} instead of $\Delta(L(\mathcal{D}))$,

\item $\frac{n\;\Bar{d}(\mathcal{D})}{k}$ instead of $|D|$ since equation \eqref{eq:deter} holds for any deterministic design $D$ and

\item $\sqrt{\frac{n \;\bar{d}(\mathcal{D})}{k}} \sigma_k$ instead of $\sqrt{\sum_{c=0}^k f_c\; \sigma_c^2}$ because the former, by Lemma \ref{lemma:ord_var}, it is a tight lower bound for the latter.
\end{enumerate}

 All these substitutions preserve the direction of the inequality. After some rearrangements, we finally obtain that:

\begin{equation*}
\sup _z\left|P\left(\sum_{S \in D} Y_S  \leq z\right)-\Phi(z)\right| \leq 75 \; [k \;(\Delta(\mathcal{D})-1) + 1]^{5(p-1)}\; \left(\frac{k}{n\;\Bar{d}(\mathcal{D})} \right)^{\frac{p}{2} - 1}\frac{\theta}{\sigma_k^p} \;\;\;,
\end{equation*}

which ends the proof since we showed previously that $\sum_{S \in D} Y_S = \frac{U_{n,D}^{(k)} - \mu_{k}}{\sqrt{\operatorname{Var}U_{n,D}^{(k)}}}$.\\

We underline that with substitution (c), we are implicitly considering the extreme case scenario of a degeneracy of order $k-1$, which implies $\sigma^2_{k-1} =0$. However, if one is willing to further assume that both $f_c > 0$ and $\sigma^2_c > 0$ for at least one $c \in \{1, \ldots, k-1\}$, then the following bound is sharper:

\begin{equation}\label{eq:BE_sharp}
\sup _z\left|P\left(\sum_{S \in D} Y_S \leq z\right)-\Phi(z)\right| \leq 75 \; \{k \;(\Delta(\mathcal{D})-1) + 1\}^{5(p-1)}\; \;\frac{n\;\Bar{d}(\mathcal{D})\; \theta}{k \; \left(\sum_{c=0}^{k} f_c \;\sigma_c^2\right)^{\frac{p}{2}}} \;. 
\end{equation}

This is because, by Lemma \ref{lemma:ord_var}, $\sum_{c=0}^{k} f_c \;\sigma_c^2 >\frac{n\; \bar{d}(\mathcal{D})}{k} \sigma_k^2$ if there exists a $c \in \{1, \ldots, k-1\}$ such that $f_c > 0$ and $\sigma^2_c > 0$. However, interpreting the previous expression is challenging in general. This is due to the fact that the $f_c$ terms are specific to each deterministic design construction and require analyzing their growth rate as the design size increases.

\end{proof}

\subsection{Statement and proof of Theorem \ref{theo:BE_bounded_kern}}

\begin{theorem}\label{theo:BE_bounded_kern}\textnormal{(Berry-Esseen for Bounded Kernels).} Let $\left\{ h(S), \;S \in D \right\}$ be random variables indexed by the vertices of their dependency graph $L(\mathcal{D})$, with $D$ being a deterministic design. Assume that $0< \sigma^{2}_{k} < \infty$ and that there exists $\theta>0$ such that $\left|h(S)\right| \leq \theta$ a.s. for all $S \in D$. Then,

\small{
\begin{equation*}
\sup _z\left|P\left(\frac{U_{n,D}^{(k)} - \mu_{k}}{\sqrt{\operatorname{Var}U_{n,D}^{(k)}}} \leq z\right)-\Phi(z)\right| \leq 32 \;(1 +\sqrt{6}) \; \{k \;(\Delta(\mathcal{D})-1) + 1\}\; \left(\frac{k}{n\;\Bar{d}(\mathcal{D})} \right)^{\frac{1}{4}}\; \left(\frac{\theta}{\sigma_k} \right)^{\frac{3}{2}}\;. 
\end{equation*}}

\noindent \normalsize{Moreover, if $D$ is also an $r$-equireplicate design, we can conclude that }

\small{
\begin{equation*}
\sup _z\left|P\left(\frac{U_{n,D}^{(k)} - \mu_{k}}{\sqrt{\operatorname{Var}U_{n,D}^{(k)}}} \leq z\right)-\Phi(z)\right| \leq 32 \;(1 +\sqrt{6}) \; \{k \;(r-1) + 1\}\; \left(\frac{k}{n\;r} \right)^{\frac{1}{4}}\; \left(\frac{\theta}{\sigma_k} \right)^{\frac{3}{2}}\;. 
\end{equation*}
}
\end{theorem}

\begin{proof}
    By Proposition \ref{prop:dep_graph}, $\{h(S), \;S \in D\}$ are random variables indexed by the vertices of $L(\mathcal{D})$. Set $W = \sum_{S \in D} h(S)$ and note that we assumed $0< \sigma^{2}_{k} < \infty$ as well as there exists $\theta>0$ such that $\left|h(S)\right| \leq \theta$ a.s. for all $S \in D$. Thus, all the conditions of \emph{Corollary 2} in \cite{baldi1989normal} are satisfied and we can conclude that:

\begin{equation*}
\sup _z\left|P\left(\frac{W - E[W] }{\sqrt{\operatorname{Var}W}} \leq z\right)-\Phi(z)\right| \leq 32 \;(1 +\sqrt{6}) \; \sqrt{|D|} \; \Delta(L(\mathcal{D})) \left(\frac{\theta}{\sqrt{\operatorname{Var}W}} \right)^{\frac{3}{2}}\;. 
\end{equation*}

At this point, we make the following substitutions in the previous expression:

\begin{enumerate}[label=(\alph*)]
\item the tight upper bound of Lemma \ref{lemma:maxdeg} instead of $\Delta(L(\mathcal{D}))$,

\item $\frac{n\;\Bar{d}(\mathcal{D})}{k}$ instead of $|D|$ since equation \eqref{eq:deter} holds for any deterministic design $D$ and

\item $\sqrt{\frac{n \;\bar{d}(\mathcal{D})}{k}}\; \sigma_k$ instead of $\sqrt{\operatorname{Var}W}$ because the former, by Lemma \ref{lemma:ord_var}, it is a tight lower bound for the latter.
\end{enumerate}

All these substitutions preserve the direction of the inequality. After some rearrangements, we finally obtain that:

\small{
\begin{equation*}
\sup _z\left|P\left(\frac{W - E[W] }{\sqrt{\operatorname{Var}W}} \leq z\right)-\Phi(z)\right|  \leq 32 \;(1 +\sqrt{6}) \; \{k \;(\Delta(\mathcal{D})-1) + 1\}\; \left(\frac{k}{n\;\Bar{d}(\mathcal{D})} \right)^{\frac{1}{4}}\; \left(\frac{\theta}{\sigma_k} \right)^{\frac{3}{2}}\;, 
\end{equation*}
}

\normalsize{which ends the first part of the proof since $\frac{W - E[W] }{\sqrt{\operatorname{Var}W}} = \frac{U_{n,D}^{(k)} - \mu_{k}}{\sqrt{\operatorname{Var}U_{n,D}^{(k)}}}$. As already underlined in the proof of Theorem \ref{theo:BE_bound}, the bounds of Theorem \ref{theo:BE_bounded_kern} can be further sharpened outside the extreme degeneracy scenario we are considering.
For the second part of the proof, the result follows by substituting $\Delta(\mathcal{D}) = \bar{d}(\mathcal{D}) = r$ in the previous bound because the design $D$ is further assumed to be $r$-equireplicate. Thus, we finally obtain that}

\small{
\begin{equation*}
\sup _z\left|P\left(\frac{U_{n,D}^{(k)} - \mu_{k}}{\sqrt{\operatorname{Var}U_{n,D}^{(k)}}} \leq z\right)-\Phi(z)\right| \leq 32 \;(1 +\sqrt{6}) \; \{k \;(r-1) + 1\}\; \left(\frac{k}{n\;r} \right)^{\frac{1}{4}}\; \left(\frac{\theta}{\sigma_k} \right)^{\frac{3}{2}}\;. 
\end{equation*}
}

\normalsize{When the order $k$ is fixed (i.e., in the \emph{finite-order regime}), it is easy to notice that if $r = O(n^{1/4})$ then the bound converges to zero, ensuring asymptotic normality even in the degenerate case. This result improves Corollary \ref{cor:CLT_equi} in the main text, at the price of assuming a bounded kernel, such as the Gaussian kernel.}

\end{proof}

\subsection{Variance of incomplete U-statistics of deterministic designs}\label{app:var_Ustat}

The dependency graph of an incomplete U-statistics i.e.,  $L(\mathcal{D})$, encodes the presence or absence of dependence relationships between pairs of random variables in $\left\{ h(S), \;S \in D \right\}$. Clearly, linear types of dependencies are also represented in $L(\mathcal{D})$. Thus, for example, if there is no edge between two random variables $h(S_1)$ and $h(S_2)$, with $S_1,S_2 \in D$, then $\operatorname{Cov}[h((S_1),h(S_2)] = 0$. Obviously, the opposite direction does not hold, as the absence of a linear dependence does not imply independence. Following this line of reasoning, it is straightforward to conclude that an upper bound on $\Delta(L(\mathcal{D}))$ is also an upper bound on the number of covariance terms in $\operatorname{Var}U_{n,D}^{(k)}$. However, a U-statistic can be degenerate, which implies that $\operatorname{Cov}[h((S_1),h(S_2)] = 0$ even if an edge connects $h(S_1)$ and $h(S_1)$ in $L(\mathcal{D})$. Thus, Lemma \ref{lemma:maxdeg} allows us to obtain only an upper bound for $\operatorname{Var}U_{n,D}^{(k)}$. The lower bound represents a scenario of \emph{extreme degeneracy}, where $\sigma^2_{k-1} = 0$ which, by inequality (\ref{eq:ineq}), implies that $\operatorname{Cov}[h((S_1),h(S_2)] = 0$ for all $S_1,S_2 \in D$, with $S_1 \neq S_2$. In this situation, excluding the case in which $\operatorname{Var}U_{n,D}^{(k)} = 0$, only $\operatorname{Var}[h(S)] = \sigma^{2}_{k} > 0$ and the number of these variance terms corresponds to the number of vertices of $L(\mathcal{D})$. The next Lemma formalizes these results.    

\begin{lemma}\label{lemma:ord_var}
    Let $\operatorname{Var}[h(S)] = \sigma^{2}_{k} < \infty$ and strictly positive, then $\operatorname{Var} U^{(k)}_{n,D}$, the variance of any incomplete U-statistic of order $k$ of a deterministic design $D$, is lower and upper bounded by:
    $$\frac{k \;\sigma^{2}_{k}}{n \; \Bar{d}(\mathcal{D})} \leq \;\operatorname{Var} U^{(k)}_{n,D} \; < \frac{k\;\{k \;(\Delta(\mathcal{D})-1) + 1\} \;\sigma^{2}_{k}}{n \; \Bar{d}(\mathcal{D})} $$  
\end{lemma}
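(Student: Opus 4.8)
The plan is to start from the exact variance identity~\eqref{eq:var_ustat}, $\operatorname{Var}U_{n,D}^{(k)} = |D|^{-2}\sum_{c=0}^{k} f_c\,\sigma_c^2$, and to sandwich the sum $\sum_c f_c\,\sigma_c^2$ between $|D|\,\sigma_k^2$ and $|D|\,\Delta(L(\mathcal D))\,\sigma_k^2$, finally converting $1/|D|$ into $k/(n\,\bar d(\mathcal D))$ via~\eqref{eq:deter}. Three elementary facts do the work. First, $f_k = |D|$: among the $|D|^2$ ordered pairs $(S_1,S_2)$ counted by $\sum_c f_c = |D|^2$, the only ones with $|S_1\cap S_2| = k$ are the diagonal pairs $S_1 = S_2$, because every block has size exactly $k$. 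Second, $\sigma_0^2 = 0$, since disjoint blocks index independent random variables. Third, every $\sigma_c^2$ is non-negative---being the variance of the projection $\E[h(S)\mid X_1,\dots,X_c]$---and satisfies $\sigma_c^2 \le (c/k)\,\sigma_k^2 \le \sigma_k^2$ by inequality~\eqref{eq:ineq}, with the domination strict for every $c<k$ whenever $\sigma_k^2>0$.

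For the lower bound I would discard all off-diagonal contributions. Since each $\sigma_c^2\ge 0$,
\[
\operatorname{Var}U_{n,D}^{(k)} = |D|^{-2}\sum_{c=0}^{k} f_c\,\sigma_c^2 \;\ge\; |D|^{-2}\,f_k\,\sigma_k^2 = \frac{\sigma_k^2}{|D|} = \frac{k\,\sigma_k^2}{n\,\bar d(\mathcal D)},
\]
with equality precisely when $\sum_{c<k} f_c\,\sigma_c^2 = 0$ (extreme degeneracy or pairwise-disjoint blocks), which is why this bound is stated non-strictly.

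For the upper bound the crucial step is to \emph{count} the nonzero-intersection pairs through the dependency graph rather than bounding them crudely. I would write
\[
\sum_{c=1}^{k} f_c \;=\; \sum_{S_1\in D}\bigl|\{S_2\in D : S_1\cap S_2\neq\emptyset\}\bigr| \;=\; \sum_{S_1\in D} d_{L(\mathcal D)}(S_1) \;\le\; |D|\,\Delta(L(\mathcal D)),
\]
where $d_{L(\mathcal D)}(S_1)$ is the degree of $S_1$ in $L(\mathcal D)$ and the self-loop accounts for the term $S_2=S_1$. Combining this with $\sigma_0^2=0$ and $\sigma_c^2\le\sigma_k^2$ gives $\sum_{c=0}^{k} f_c\,\sigma_c^2 = \sum_{c=1}^{k} f_c\,\sigma_c^2 \le \sigma_k^2\sum_{c=1}^{k} f_c \le \sigma_k^2\,|D|\,\Delta(L(\mathcal D))$; then the tight bound $\Delta(L(\mathcal D)) \le k(\Delta(\mathcal D)-1)+1$ of Lemma~\ref{lemma:maxdeg} and $|D| = n\,\bar d(\mathcal D)/k$ yield the claimed expression after dividing by $|D|^2$.

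The delicate part---and the step I expect to be the main obstacle---is the strictness of the upper bound. It rests on $\sigma_c^2<\sigma_k^2$ for $c\in\{1,\dots,k-1\}$: as soon as some $f_c>0$ with $1\le c\le k-1$ (i.e.\ two distinct blocks share an index), the middle inequality above is strict. The single configuration that evades this is a design whose distinct blocks are pairwise disjoint, a matching with $\Delta(\mathcal D)=1$, where $k(\Delta(\mathcal D)-1)+1$ collapses to $1$ and the two bounds coincide. I would therefore either record the standing assumption that the design induces genuine dependence (equivalently $\Delta(\mathcal D)\ge 2$), under which the strict inequality holds unconditionally, or replace ``$<$'' by ``$\le$'' to cover the matching as a harmless boundary case.
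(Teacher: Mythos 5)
Your proof is correct and follows essentially the same route as the paper's: both bound the covariance sum below by its diagonal contribution $f_k\,\sigma_k^2=|D|\,\sigma_k^2$ and above by $\sigma_k^2$ times the number of intersecting ordered pairs, which is the sum of degrees in $L(\mathcal{D})$ (the paper phrases this as summing the entries of the adjacency matrix $A_{L(\mathcal{D})}$) and hence at most $|D|\,\{k\,(\Delta(\mathcal{D})-1)+1\}$ by Lemma~\ref{lemma:maxdeg}. Your observation that strictness of the upper bound fails exactly when the distinct blocks are pairwise disjoint (so $\Delta(\mathcal{D})=1$ and the two bounds coincide at $\sigma_k^2/|D|$) identifies a genuine boundary case that the paper's own strict inequality also glosses over, and your proposed repair---either assuming $\Delta(\mathcal{D})\ge 2$ or weakening ``$<$'' to ``$\le$''---is the right fix.
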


In the proof provided in the next paragraph, we begin by stacking the random variables in the set $\left\{ h(S), \;S \in D \right\}$ to form the vector $\boldsymbol{h(S)}$. We then leverage the interpretation of $A_{L(\mathcal{D})}$—the adjacency matrix of the line graph $L(\mathcal{D})$ (see Section \ref{app:adj_mat})—as an unweighted analogue of the variance-covariance matrix $\Sigma = \operatorname{Var}[ \boldsymbol{h(S)}]$. This correspondence allows us to apply the upper bound established in Lemma~\ref{lemma:maxdeg}. The upper bound is
relatively loose for deterministic designs that are far from being equireplicate. However, the bound becomes tighter as the gap between the maximum degree $\Delta(\mathcal{D})$ and the average degree $\Bar{d}(\mathcal{D})$ decreases, for a fixed design size. Moreover, as long as $k = o(\sqrt{n})$, the bound is asymptotically tight for designs $D^{\diamond}$ that are both equireplicate and linear. When $k \asymp \sqrt{n} $, the bound stays asymptotically tight for this class of designs, but now up to a constant (see the proof below for further details).

 To conclude, we underline that the results derived in Lemma \ref{lemma:ord_var} and \ref{lemma:var_lin_hyper} encompass both \emph{degenerate} and \emph{non-degenerate} cases of incomplete U-statistics based on deterministic designs. Moreover, they imply that $\operatorname{Var}U_{n,D}^{(k)}$ is $O(\frac{1}{n})$ when $k$ is fixed and $\Delta(\mathcal{D})$ and $\Bar{d}(\mathcal{D})$ grow at the same rate. This is akin to the variance of a classical mean estimator, even if the random variables in the set $\left\{ h(S), \;S \in D \right\}$ are dependent. 

\subsection{Proof of Lemma \ref{lemma:ord_var}}

\begin{proof}
We consider the set of random variables $\left\{ h(S), \;S \in D \right\}$ and stack them into a vector $\boldsymbol{h(S)}$, which is of size $|D|$. We call $\Sigma = \operatorname{Var}[ \boldsymbol{h(S)}]$ its variance-covariance matrix, which is of size $|D|\times |D|$, and such that each element $\sigma_{i,j} = \operatorname{Cov}\left(h\left(S_i\right), h\left(S_j\right)\right)$ with $S_i, S_j \in D$. For all $i,j \in \{1,.., |D|\}$, the intersection $|S_i \cap S_j | \in \{0, \ldots, k \}$ by definition (see subsection \ref{subsec:ustat}). Therefore, each $\sigma_{i,j}$ must be equal to a particular $\sigma^{2}_c$ with $c \in \{0, \ldots, k \}$. This also implies that each $\sigma_{i,j} \geq 0$ since each $\sigma^{2}_c \geq 0$. However, note that when $|S_i \cap S_j | = 0$ then $\sigma_{i,j} = \sigma^{2}_0 = 0$ because the random variables $h(S_i)$ and $h(S_j)$ do not share an index and thus are independent. Now, given that $\left\{ h(S), \;S \in D \right\}$ is the set of random variables whose average defines $U_{n,D}^{(k)}$, we can clearly express the variance as the rescaled sum of the elements of the variance-covariance matrix:

\begin{equation}
    \operatorname{Var} U_{n,D}^{(k)} = |D|^{-2}  \sum_{i=1}^{|D|} \sum_{j=1}^{|D|} \ind{\{|S_i \cap S_j | \neq 0\}} \; \sigma_{i,j} \;\;, \label{eq:mat_varUstat}
\end{equation}

where $\ind{\{|S_i \cap S_j | \neq 0\}}$ is an indicator function that is $1$ when $|S_i \cap S_j | \neq 0$ and $0$ otherwise. This expression is an alternative to the standard equation (\ref{eq:var_ustat}) as it does not explicitly consider the grouping with respect to the $f_c$ cardinalities. At this point, obtaining a lower bound on $\operatorname{Var} U_{n,D}^{(k)}$ is straightforward. This because each $\sigma_{i,j} \geq 0$ and they are all equal to zero if and only if the U-statistics is degenerate of order $k$. But we assumed that $\sigma^{2}_{k} > 0$ so at worst the U-statistics can be degenerate of order $k-1$, which implies that $0 = \sigma_1^2 = \ldots = \sigma_{k-1}^2$. In this scenario, if $\sigma_{i,j} > 0$ then $\sigma_{i,j} = \sigma^{2}_{k}$. Moreover, the indicator is always one in this case since $|S_i \cap S_j | = k$. Then, there are exactly $|D|$ of these quantities because they can only appear on the principal diagonal of $\Sigma$ since $\operatorname{Var}[h(S)] = \sigma^{2}_{k}$. Thus, since equation (\ref{eq:handshaking_kuni}) holds for any deterministic design $D$ with blocks of size $k$, we can conclude that:

 $$\frac{k\;\sigma^{2}_{k}}{n\;\Bar{d}(\mathcal{D})}\;  \leq \;\operatorname{Var} U^{(k)}_{n,D} \;. $$  

In contrast, to maximize the variance we need to consider the non-degenerate case where all $\sigma^2_c > 0$ for $c \in \{1, \ldots, k \}$. To obtain an upper bound, we start by observing that, by inequality (\ref{eq:ineq}), $\sigma^2_k > \sigma^2_c$ for $c \in \{1, \ldots, k-1 \}$ so that

\begin{equation*}
 \sum_{i=1}^{|D|} \sum_{j=1}^{|D|} \ind{\{|S_i \cap S_j | \neq 0\}} \; \sigma_{i,j} < \; \sigma^2_k\;  \sum_{i=1}^{|D|} \sum_{j=1}^{|D|}  \ind{\{|S_i \cap S_j | \neq 0\}} \;\;. 
\end{equation*}

Then, note that $ \sum_{i=1}^{|D|} \sum_{j=1}^{|D|}  \ind{\{|S_i \cap S_j | \neq 0\}}$ is just summing all the elements of a $\{0,1\}^{|D|\times |D|}$ matrix that indicates whether any given two blocks of the design $D$ share at least one index or not. But this is exactly the definition of $A_{L(\mathcal{D})}$, the adjacency matrix of $L(\mathcal{D})$ which is the line graph of the hypergraph $\mathcal{D}$ (see section \ref{subsec:hyper}). At this point, by Lemma \ref{lemma:maxdeg}, we can conclude that the sum of all the elements of any given row or column of $A_{L(\mathcal{D})}$, which represents the degree of a given vertex of $L(\mathcal{D})$, is upper bounded by $[k \;(\Delta(\mathcal{D})-1) + 1]$. Due to the fact that there are a total of $|D|$ columns or lines, this implies that:

\begin{equation*}
\sigma^2_k\;  \sum_{i=1}^{|D|} \sum_{j=1}^{|D|}  \ind{\{|S_i \cap S_j | \neq 0\}} \leq \sigma^2_k\;  \sum_{i=1}^{|D|} \{k \;(\Delta(\mathcal{D})-1) + 1\} = \sigma^2_k\; |D| \;\{k \;(\Delta(\mathcal{D})-1) + 1\} . 
\end{equation*}

Thus, since equation (\ref{eq:handshaking_kuni}) holds for the deterministic design $D$, we can conclude that 

 $$ \operatorname{Var} U^{(k)}_{n,D} \; < \frac{k\;\{k \;(\Delta(\mathcal{D})-1) + 1\} \;\sigma^{2}_{k}}{n\; \Bar{d}(\mathcal{D})} \;. $$  

This ends the proof. In the case of $r$-equireplicate designs, the maximum and average degrees coincide, i.e., $\Delta(\mathcal{D}) = \Bar{d}(\mathcal{D}) = r$. If a design $D^{\diamond}$ is also linear, the difference between the general upper bound for the variance and the closed-form expression of  $\operatorname{Var} U_{n,D^{\diamond}}^{(k)}$, see Lemma \ref{lemma:var_lin_hyper}, is

$$
\frac{k^2 \;(r - 1) \;(\sigma_k^2 - \sigma_1^2)}{n \;r}.
$$

Thus, as long as $k = o(\sqrt{n})$, the previously established upper bound is asymptotically tight. When $k \asymp \sqrt{n} $, the bound is asymptotically tight up to a constant.

\end{proof}

\subsection{Statement and proof of Proposition \ref{prop:maxdeg_linegraph}}

\begin{proposition}\label{prop:maxdeg_linegraph}
    Let $\mathcal{D} = (V, D)$ be the hypergraph of a non-equireplicate deterministic design $D$ and let $\mathcal{D}^{\dagger} = (V, D^{\dagger})$ be the hypergraph of an $r$-equireplicate design $D^{\dagger}$. Whenever $r < \frac{\Delta(\mathcal{D}) - 1}{k} + 1$, we have that
    
    $$\Delta(L(\mathcal{D}^{\dagger})) < \Delta(L(\mathcal{D})) \;\;.$$

    Moreover, if both designs have same block size $k = 2$ and cardinality i.e., $|D| = |D^{\dagger}|$, then 
    $$\Delta(L(\mathcal{D}^{\dagger})) < \Delta(L(\mathcal{D})) \;\;.$$
\end{proposition}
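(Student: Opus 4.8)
The plan is to prove the two claims separately, both resting on the two-sided estimate $\Delta(\mathcal{D}) \le \Delta(L(\mathcal{D})) \le k(\Delta(\mathcal{D})-1)+1$ of Lemma~\ref{lemma:maxdeg} together with the fact that an $r$-equireplicate hypergraph is $r$-regular, so that $\Delta(\mathcal{D}^{\dagger}) = \bar{d}(\mathcal{D}^{\dagger}) = r$. The first claim is then pure algebra. The hypothesis $r < (\Delta(\mathcal{D})-1)/k + 1$ rearranges to $k(r-1)+1 < \Delta(\mathcal{D})$. Applying the upper bound of Lemma~\ref{lemma:maxdeg} to the regular hypergraph $\mathcal{D}^{\dagger}$ gives $\Delta(L(\mathcal{D}^{\dagger})) \le k(r-1)+1$, while the lower bound applied to $\mathcal{D}$ gives $\Delta(\mathcal{D}) \le \Delta(L(\mathcal{D}))$. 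Chaining these yields $\Delta(L(\mathcal{D}^{\dagger})) \le k(r-1)+1 < \Delta(\mathcal{D}) \le \Delta(L(\mathcal{D}))$, the desired strict inequality, with no combinatorial input beyond Lemma~\ref{lemma:maxdeg}.

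The second claim is where the real work lies, since the generic bounds of Lemma~\ref{lemma:maxdeg} are too loose: they deliver only $\Delta(L(\mathcal{D})) \ge \Delta(\mathcal{D}) > r$, whereas for $k=2$ one has $\Delta(L(\mathcal{D}^{\dagger})) = 2r-1$, so I must sharpen the lower bound to order $2r$. First I would note that equal design sizes force equal average degrees: combining equation~\eqref{eq:equirep} and equation~\eqref{eq:deter} with common $k=2$ and $n$, the condition $|D|=|D^{\dagger}|$ gives $\bar{d}(\mathcal{D})=r$, and handshaking gives $\sum_{v} d(v) = 2|D| = nr$. Next I would exploit the exact line-graph structure when $k=2$: a block $e=\{u,v\}$ is adjacent (its loop included) to precisely the blocks meeting $u$ or $v$, and since two distinct size-$2$ blocks share at most one index, inclusion--exclusion gives $\deg_{L(\mathcal{D})}(e) = d(u)+d(v)-1$; in particular $\Delta(L(\mathcal{D}^{\dagger}))=2r-1$ and $\Delta(L(\mathcal{D})) = \max_{\{u,v\}\in D}\{d(u)+d(v)\}-1$. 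It then suffices to exhibit one edge with $d(u)+d(v) > 2r$. I would obtain this from the identity $\sum_{\{u,v\}\in D}(d(u)+d(v)) = \sum_{v} d(v)^2$ (each vertex $v$ contributes $d(v)$ to each of its $d(v)$ incident edges) combined with Cauchy--Schwarz, $\sum_v d(v)^2 \ge (\sum_v d(v))^2/n = nr^2$, which is \emph{strict} precisely because $\mathcal{D}$ is non-regular. Hence $\sum_{\{u,v\}}(d(u)+d(v)) > nr^2 = 2r|D|$, and averaging over the $|D|$ edges produces a block with $d(u)+d(v) > 2r$. Therefore $\Delta(L(\mathcal{D})) \ge d(u)+d(v)-1 > 2r-1 = \Delta(L(\mathcal{D}^{\dagger}))$.

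The main obstacle is exactly this last step: the off-the-shelf inequalities of Lemma~\ref{lemma:maxdeg} do not separate the two maximum degrees in the $k=2$ regime, and the crux is recognizing that the strict gap between $\sum_v d(v)^2$ and its Cauchy--Schwarz floor $nr^2$—which vanishes if and only if the design is regular—transfers, through the edge-sum identity and a pigeonhole average over the $|D|$ edges, into the existence of a single edge whose endpoint-degree sum strictly exceeds $2r$. Everything else is bookkeeping with Lemma~\ref{lemma:maxdeg} and the handshaking lemma.
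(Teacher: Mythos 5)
Your proposal is correct and follows essentially the same route as the paper's proof: part one is the same algebraic chaining of the two-sided bound from Lemma~\ref{lemma:maxdeg}, and part two uses the identical key steps — the formula $\deg_{L(\mathcal{D})}(e)=d(u)+d(v)-1$ for $k=2$, the identity $\sum_{\{u,v\}\in D}(d(u)+d(v))=\sum_v d(v)^2$, strict Cauchy--Schwarz from non-regularity, and an averaging argument to exhibit an edge with $d(u)+d(v)>2r$. No gaps.
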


\begin{proof}
    $\mathcal{D}^{\dagger} = (V, D^{\dagger})$ is an $r$-equireplicate design and thus $\Delta(\mathcal{D}^{\dagger}) = r$. Then, by Lemma \ref{lemma:maxdeg}, we can conclude that

    $$ \Delta(L(\mathcal{D}^{\dagger})) \leq k\;(r-1) + 1 \; .$$

Again by Lemma \ref{lemma:maxdeg}, we know that for a generic deterministic design $\Delta(\mathcal{D}) \leq \Delta(L(\mathcal{D})) $. Thus, if the upper bound $k\;(r-1) + 1$ is strictly smaller than $\Delta(\mathcal{D})$, then we know that $\Delta(L(\mathcal{D}^{\dagger})) < \Delta(L(\mathcal{D}))$. By rewriting the condition in terms of the replication parameter $r$, we obtain that whenever $r < \frac{\Delta(\mathcal{D}) - 1}{k} + 1$ we have that $\Delta(L(\mathcal{D}^{\dagger})) < \Delta(L(\mathcal{D}))$, therefore proving the first part of the proposition.

For the second part of the proposition, we assume that $k = 2$ and $|D| = |D^{\dagger}|$. In this specific situation, the hypergraph $\mathcal{D}^{\dagger}$ is not only $r$-regular, which is the case for every $r$-equireplicate designs, but also linear i.e., for all $e_1, e_2 \in D^{\dagger}$, with $e_1 \neq e_2$, we have that $\left|e_1 \cap e_2\right| \leq 1$. Thus, as already explained in the proof of Lemma \ref{lemma:maxdeg} when we treat the case in which the upper bound is met, we can conclude that $\Delta(L(\mathcal{D}^{\dagger})) = 2\;(r-1)+1$. Now, consider a generic hyperedge $e \in D$ i.e., a vertex of $L(\mathcal{D})$. Since $k=2$, we know that $|e| = 2$ and, without loss of generality, we can consider vertices $v_1,v_2 \in V$ such that $e=\{v_1,v_2\}$. The degree of $e$, seen as a vertex of $L(\mathcal{D})$, is equal to $d(v_1) + d(v_2) - 1$. This because $v_1$ contributes for exactly $d(v_1) - 1$ edges in $L(\mathcal{D})$ and, equivalently, $v_2$ contributes for $d(v_2) - 1$. Overlaps cannot occur when $k=2$ since the hypergraph $\mathcal{D}$ is linear. To obtain $d(e)= d(v_1) + d(v_2) - 1$, we just add the self-edge, which is always present in $L(\mathcal{D})$, to the previous count. Besides, note that since $\Delta(L(\mathcal{D}^{\dagger})) = 2\;(r-1)+1$, if we show that there exists at least a hyperedge $e = \{v_1,v_2\} \in D$ such that $d(e) > 2\;(r-1) + 1$ then we can conclude that $\Delta(L(\mathcal{D}^{\dagger})) < \Delta(L(\mathcal{D}))$. Substituting the previously obtained value of $d(e)$ and simplifying the expression, the condition becomes $d(v_1) + d(v_2) > 2\;r$. To prove that this holds, we start noticing that 

$$\sum_{\{i,j\} \in D} \{d(v_i) + d(v_j)\} = \sum^{n}_{i = 1} d(v_i)^2$$ 

because a vertex $v_i \in V$ of degree $d(v_i)$ is incident with $d(v_i)$ edges, and each of those edges contributes $d(v_i)$ once to the left sum. Now, knowing that $|D^{\dagger}| = |D|$ by assumption, we apply Cauchy–Schwarz inequality on the vector of degrees  $\boldsymbol{d(v)} =[d(v_1),d(v_2),\ldots, d(v_n)]$ and a vector of ones of length $n$, to obtain

\begin{equation*}
\sum^{n}_{i = 1} d(v_i)^2 > \frac{\left\{\sum^{n}_{i = 1} d(v_i)\right\}^2}{n}\overset{\eqref{eq:handshaking_kuni}}{=} \frac{(2 |D^{\dagger}|)^2}{n}\overset{\eqref{eq:equirep}}{=}r^2 n \;\; .    
\end{equation*}

The equality holds if and only if the two vectors are linearly dependent i.e., when all the degrees are equal. But $D$ is not an equireplicate design by assumption so this cannot happen. Moreover, if we consider the average degree of a vertex in $L(\mathcal{D})$, we can now conclude that

\begin{equation*}
    \frac{\sum_{\{i,j\} \in D} \{d(v_i) + d(v_j)\}}{|D^{\dagger}|} >  \frac{r^2 n}{|D^{\dagger}|} \overset{\eqref{eq:equirep}}{=} 2\;r \;\;.
\end{equation*} 

But this means that there exists at least a $e=\{v_i,v_j\} \in D$ such that $d(v_i) + d(v_j) > 2r$. If this was not the case, then the average degree could not be strictly greater than $2\;r$. This concludes the proof as the existence of two vertices $v_i$ and $v_j$ that meet the previous condition implies $\Delta(L(\mathcal{D}^{\dagger})) < \Delta(L(\mathcal{D}))$. 
    
\end{proof}

\subsection{Statement and proof of Lemma \ref{lemma:equi_r_vs_det}}

\begin{lemma}\label{lemma:equi_r_vs_det}
    Let $\mathcal{D} = (V, D)$ be the hypergraph of a non-equireplicate deterministic design $D$ and $\mathcal{D}^{\dagger} = (V, D^{\dagger})$  be the hypergraph of an $r$-equireplicate design $D^{\dagger}$. Assume that both designs have the same block size $k$ and cardinality i.e., $|D| = |D^{\dagger}|$. Then, 
    $$\Delta(\mathcal{D}) > \Delta(\mathcal{D}^{\dagger}) = r \;\;.$$
\end{lemma}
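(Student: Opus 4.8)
The plan is to compare the two hypergraphs through their shared edge count and then exploit the elementary gap between maximum and average degree. Since $\mathcal{D}^{\dagger}$ is $r$-equireplicate, every vertex has degree $r$, so $\Delta(\mathcal{D}^{\dagger}) = \bar{d}(\mathcal{D}^{\dagger}) = r$, and by \eqref{eq:equirep} we have $|D^{\dagger}|\,k = n\,r$. The entire argument rests on transferring this identity to $\mathcal{D}$ via the hypothesis $|D| = |D^{\dagger}|$.

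First I would apply the $k$-uniform handshaking lemma \eqref{eq:handshaking_kuni} to $\mathcal{D}$: because $D$ has blocks of size $k$, it holds that $\sum_{v \in V} d(v) = |D|\,k$. Combining this with $|D| = |D^{\dagger}|$ and $|D^{\dagger}|\,k = n\,r$ yields $\sum_{v \in V} d(v) = n\,r$, so the average degree of $\mathcal{D}$ is exactly $\bar{d}(\mathcal{D}) = r$. At this stage the two hypergraphs share the same total degree and hence the same average degree, even though their degree distributions differ.

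Next I would invoke the fact that the maximum degree always dominates the average degree, $\Delta(\mathcal{D}) \geq \bar{d}(\mathcal{D}) = r$, and then upgrade this to a strict inequality. The upgrade is where the non-equireplicate hypothesis enters: if instead $\Delta(\mathcal{D}) \leq r$ held, then every vertex would satisfy $d(v) \leq r$; but since $\sum_{v \in V} d(v) = n\,r$ forces the average to equal $r$, all degrees would have to equal $r$, making $\mathcal{D}$ an $r$-regular $k$-uniform hypergraph and therefore $r$-equireplicate by Observation \ref{obs:equi_hyper_rep}. This contradicts the assumption that $D$ is non-equireplicate. Hence $\Delta(\mathcal{D}) > r = \Delta(\mathcal{D}^{\dagger})$, as claimed.

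I do not anticipate a genuine obstacle here, since the statement is purely combinatorial and does not involve the kernel $h$ or any probabilistic structure. The only point requiring care is the strictness of the inequality, which must be deduced from non-regularity rather than from the bare bound $\Delta(\mathcal{D}) \geq \bar{d}(\mathcal{D})$; the contrapositive argument above handles this cleanly.
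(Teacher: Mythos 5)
Your proof is correct and follows essentially the same route as the paper: apply the $k$-uniform handshaking lemma together with $|D| = |D^{\dagger}|$ to get $\bar{d}(\mathcal{D}) = r$, then use the non-equireplicate hypothesis to force some vertex of degree strictly greater than $r$. Your contrapositive phrasing of the final step (all degrees $\leq r$ plus average $= r$ implies $r$-regularity, contradicting non-equireplicateness) is, if anything, slightly cleaner than the paper's wording of the same argument.
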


\noindent \textit{Proof sketch:} the proof, provided below, leverages the extension of the classical \emph{handshaking lemma}, discussed in Section \ref{subsec:hyper}, to $k$-uniform hypergraphs. Indeed, assuming that both designs have the same block size $k$ and cardinality, by equations \eqref{eq:equirep} and \eqref{eq:deter}, implies that $\Bar{d}(\mathcal{D}) = r$. But then, intuitively, there must be an index which appears strictly more than $r$ times in the non-equireplicate deterministic design, therefore forcing $\Delta(\mathcal{D}) > r$.

\begin{proof}
    Equation (\ref{eq:equirep}) is an extension of the classical \emph{handshaking lemma}—that relates the number of edges in a graph with the sum of the degrees—for $k$-uniform, $r$-regular hypergraphs. However, it can also be stated for a generic $k$-uniform hypergraph, as done in equation \eqref{eq:deter}:

\begin{equation*}
|D| \; k = n \; \Bar{d}\;,   
\end{equation*}

where $\Bar{d}$ is the average degree of the hypergraph of the design $\mathcal{D} = (V,D)$. This because $\mathcal{D}$ is $k$-uniform and each hyperedge contributes exactly $k$ incidences, one for each vertex it contains. Thus, $\sum_{v \in V} d(v)=|D|k$ and the result follows dividing both sides by $n$. Clearly, for $r$-regular hypergraphs, whose edge set is an $r$-equireplicate design, $\Bar{d} = r$. Thus, since we assumed that $|D| = |D^{\dagger}|$ and that the block size $k$ is the same, both equation (\ref{eq:handshaking_kuni}) and equation (\ref{eq:equirep}) hold true and we can conclude that the average degree of the hypergraph of the deterministic design $D$ must be equal to $r$. But then, there exist at least a vertex $v \in V$ whose degree is greater than $r$ because $D$ is not equireplicate. If this was not the case, and $d(v) \leq r$ for all $v \in V$ of $\mathcal{D}$ then $\Bar{d} \neq r$ violating equation (\ref{eq:handshaking_kuni}) because equation (\ref{eq:equirep}) must hold and $\mathcal{D}$ is not $r$-regular by assumption. On the other hand, since $\mathcal{D}^{\dagger}$ is an $r$-regular hypergraph, we have that $\Delta(\mathcal{D}^{\dagger}) = r$. Therefore, there exists at least a $v \in V$ of $\mathcal{D}$ such that $d(v) > r$ which implies \emph{a fortiori} that $\Delta(\mathcal{D}) > \Delta(\mathcal{D}^{\dagger}) = r$.         

\end{proof}

\subsection{Statement and proof of Lemma \ref{lemma:var_lin_hyper}}

\begin{lemma}\label{lemma:var_lin_hyper}
Let $D^{\diamond}$ be an $r$-equireplicate and linear design. Then the variance of any incomplete U-statistic of order $k$ based on $D^{\diamond}$ is
$$
\operatorname{Var}U_{n,D^{\diamond}}^{(k)} = \frac{k^2\;(r - 1)\;\sigma_1^2 + k\; \sigma_k^2}{n \;r},
$$

\noindent and this variance is minimal among all incomplete U-statistics with the same design size $|D^{\diamond}|$. 

\end{lemma}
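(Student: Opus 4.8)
The plan is to split the proof into the explicit variance computation and the minimality claim. For the formula, I would start from the general expression \eqref{eq:var_ustat}, $\operatorname{Var}U_{n,D^{\diamond}}^{(k)} = |D^{\diamond}|^{-2}\sum_{c=0}^{k} f_c\,\sigma_c^2$, and exploit the two defining features of $D^{\diamond}$. Linearity forces $|S_1\cap S_2|\le 1$ for distinct blocks, so $f_c = 0$ for every $c\in\{2,\dots,k-1\}$, and because $D^{\diamond}\subseteq\mathcal{B}_k$ consists of distinct sets of size $k$, the only ordered pairs with $|S_1\cap S_2|=k$ are the diagonal ones, giving $f_k = |D^{\diamond}|$. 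Together with $\sigma_0^2=0$ (disjoint blocks are independent), the sum collapses to $f_1\,\sigma_1^2 + |D^{\diamond}|\,\sigma_k^2$. The remaining combinatorial quantity $f_1$ I would count vertex by vertex: each index $v$ lies in exactly $r$ blocks by equireplicacy, contributing $r(r-1)$ ordered pairs of distinct blocks through $v$; linearity guarantees each such pair meets in the single vertex $v$, so the counts over distinct vertices do not overlap and $f_1 = n\,r(r-1)$.

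Substituting $f_1 = nr(r-1)$, $f_k=|D^{\diamond}|$, and $|D^{\diamond}| = nr/k$ from \eqref{eq:equirep} into the collapsed sum and simplifying yields exactly $\bigl(k^2(r-1)\sigma_1^2 + k\,\sigma_k^2\bigr)/(nr)$, the claimed closed form. This step is routine algebra once $f_1$ is in hand.

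For minimality I would prove a matching lower bound valid for every design $D$ of the same size $m=|D^{\diamond}|=nr/k$. Writing $m^2\operatorname{Var}U_{n,D}^{(k)} = m\,\sigma_k^2 + \sum_{c=1}^{k-1} f_c\,\sigma_c^2$ (again using $f_k=m$ and $\sigma_0^2=0$), I would invoke inequality \eqref{eq:ineq} in the form $\sigma_c^2 \ge c\,\sigma_1^2$ to get $\sum_{c=1}^{k-1} f_c\,\sigma_c^2 \ge \sigma_1^2\sum_{c=1}^{k-1} c\,f_c$. The identity $\sum_{c=0}^{k} c\,f_c = \sum_{v\in V} d(v)^2$ then gives $\sum_{c=1}^{k-1} c\,f_c = \sum_v d(v)^2 - km$, and Cauchy--Schwarz together with the handshaking identity $\sum_v d(v) = mk$ from \eqref{eq:handshaking_kuni} yields $\sum_v d(v)^2 \ge (mk)^2/n$. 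Chaining these bounds and substituting $m=nr/k$ reproduces the closed form above as a lower bound on $\operatorname{Var}U_{n,D}^{(k)}$. Equality holds precisely when the degree sequence is constant (Cauchy--Schwarz) and no weight sits on intersection sizes $c\in\{2,\dots,k-1\}$, i.e.\ exactly for an equireplicate linear design, so $D^{\diamond}$ attains the minimum; this recovers Theorem~2, page~196 of \cite{lee1990u} cited in \ref{app:min_var}.

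The main obstacle is the minimality half rather than the formula. The delicate points are to justify that the two chained inequalities are simultaneously tight for $D^{\diamond}$—the moment bound $\sigma_c^2\ge c\sigma_1^2$ is applied only at indices $c$ where $f_c>0$, which for a linear design are absent beyond $c=1$, while Cauchy--Schwarz is tight exactly under equireplicacy—and to track the diagonal contribution $f_k=m$, which relies on the blocks being distinct sets so that no off-diagonal pair can attain $|S_1\cap S_2|=k$. Verifying the counting identity $\sum_c c f_c = \sum_v d(v)^2$ and that $\sigma_1^2\ge 0$ (needed to sign the Cauchy--Schwarz step) are the remaining routine checks.
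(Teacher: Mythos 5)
Your proposal is correct. For the variance formula, your route is essentially the paper's: the paper counts the nonzero covariance entries row by row in the adjacency matrix of $L(\mathcal{D}^{\diamond})$, using that this line graph is $\{k(r-1)+1\}$-regular for an equireplicate linear design, which amounts to the same count as your vertex-by-vertex tally $f_1 = n\,r(r-1)$, $f_k = |D^{\diamond}|$, $f_c=0$ otherwise. Where you genuinely diverge is the minimality claim: the paper handles it by observing that linearity forces the off-diagonal entries of $NN^{T}$ to be $0$ or $1$ and then citing Corollary~1 of Theorem~2 on page~197 of \cite{lee1990u}, whereas you give a self-contained lower bound valid for every design of size $m=nr/k$, chaining $\sigma_c^2 \ge c\,\sigma_1^2$ from \eqref{eq:ineq}, the identity $\sum_c c\,f_c = \sum_v d(v)^2$, and Cauchy--Schwarz with $\sum_v d(v)=mk$. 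Your argument checks out (the lower bound evaluates to exactly $\{k^2(r-1)\sigma_1^2+k\sigma_k^2\}/(nr)$, which $D^{\diamond}$ attains, and the needed sign condition $\sigma_1^2\ge 0$ is the standard nonnegativity of the $\sigma_c^2$ that the paper also uses elsewhere); what it buys is independence from the external design-theory result and an explicit view of where slack enters for non-equireplicate or non-linear competitors, at the cost of a slightly longer derivation than the paper's one-line citation.
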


\begin{proof}
We follow the same passages outlined in the proof of Lemma \ref{lemma:ord_var}, up to obtaining equation (\ref{eq:mat_varUstat}) which we reproduce below for $|D| = |D^{\diamond}|$:

\begin{equation*}
    \operatorname{Var} U_{n,D^{\diamond}}^{(k)} = |D^{\diamond}|^{-2}  \sum_{i=1}^{|D^{\diamond}|} \sum_{j=1}^{|D^{\diamond}|} \ind{\{|S_i \cap S_j | \neq 0\}} \; \sigma_{i,j} \;\;.
\end{equation*}

Now, because the hypergraph is linear, we can completely characterize the values of the $\sigma_{i,j}$. Either $|S_i \cap S_j | = 0$ which implies $\sigma_{i,j} = \sigma^2_{0} = 0$, $|S_i \cap S_j | = 1$ which implies $\sigma_{i,j} = \sigma^2_1$ or $S_i = S_j$ which implies $|S_i \cap S_j | = k$ and thus $\sigma_{i,j} = \sigma^2_k$. Then, as already explained in the proof of Lemma \ref{lemma:ord_var}, $\sum_{i=1}^{|D^{\diamond}|} \sum_{j=1}^{|D^{\diamond}|}  \ind{\{|S_i \cap S_j | \neq 0\}}$ is summing the elements of $A_{L(\mathcal{D}^{\diamond})}$, the adjacency matrix of $L(\mathcal{D}^{\diamond})$ which is the line graph of the hypergraph $\mathcal{D}^{\diamond}$. Now, without loss of generality, consider a particular row of $A_{L(\mathcal{D}^{\diamond})}$. A value of $1$ to a given element of that row can be given either if $|S_i \cap S_j | = 1$ which implies $\sigma_{i,j} = \sigma^2_1$ or $S_i = S_j$ which implies $\sigma_{i,j} = \sigma^2_k$ as already discussed. However, $S_i = S_j$ can happen only once for each row because it represents an element on the principal diagonal of $L(\mathcal{D}^{\diamond})$. Moreover, since the hypergraph is $r$-regular and linear by assumption, we know by Lemma \ref{lemma:maxdeg} (see the part of the proof that shows when the upper bound is attained) that $L(\mathcal{D}^{\diamond})$ must also be $k\;(r-1)+1$-regular. Therefore, the sum of all the elements of any given row or column of $A_{L(\mathcal{D}^{\diamond})}$, which represents the degree of a given vertex of $L(\mathcal{D}^{\diamond})$, must be equal to $[k \;(r-1) + 1]$. Then, we can uniquely conclude that the number of $\sigma^2_1$ in each row must be $k \;(r-1)$ and clearly, as explained before, there is only one $\sigma^2_k$. Due to the fact that $L(\mathcal{D}^{\diamond})$ has a total of $|D^{\diamond}| = \frac{nr}{k}$ rows, this implies that:

$$
\operatorname{Var}U_{n,D^{\diamond}}^{(k)} = \frac{k \left\{k\;(r - 1)\sigma_1^2 +\sigma_k^2\right\}}{n \;r}.
$$

To finish the proof, note that by definition of a linear hypergraph, the intersection size of any two distinct blocks of $D^{\diamond}$ is at most one. This means that the off-diagonal elements of the matrix $NN^{T}$, where $N$ is the incidence matrix of $D^{\diamond}$, are either zero or one. If this was not the case, then the linearity condition would be violated since an off-diagonal element of $NN^{T}$ displays how many blocks of $D^{\diamond}$ contain the pair of indices $\{i,j\}$, with $i,j \in V$. Thus, since $D^{\diamond}$ is also balanced—i.e., equireplicate—by assumption, Corollary 1 of Theorem 2 of \cite{lee1990u}, page 197, applies and $D^{\diamond}$ is a minimum variance design. Therefore, the variance of the incomplete U-statistic $U_{n,D^{\diamond}}^{(k)}$ induced by $D^{\diamond}$ is minimal among all incomplete U-statistics having the same design size.
\end{proof}

\subsection{Statement and proof of Corollary \ref{cor:BE_equi_lin}}

\begin{corollary}\label{cor:BE_equi_lin}\textnormal{(Berry-Esseen for Equireplicate and Linear Designs).}
Let $\left\{ h(S), \;S \in D^{\diamond} \right\}$ be random variables indexed by the vertices of their dependency graph $L(\mathcal{D}^{\diamond})$, with $D^{\diamond}$ being a $r$-equireplicate and linear design. Assume that $0< \sigma^{2}_{k} < \infty$ and that there exists $2<p \leq 3$ such that $E\left[\left|h(S) - \mu_k\right|^p \right] \leq \theta$ for some $\theta>0$. Then

\footnotesize{
\begin{equation*}
\sup _z\left|P\left(\frac{U_{n,D^{\diamond}}^{(k)} - \mu_{k}}{\sqrt{\operatorname{Var}U_{n,D^{\diamond}}^{(k)}}} \leq z\right)-\Phi(z)\right| \leq 75 \; \{k \;(r-1) + 1\}^{5(p-1)}\; \left(\frac{k}{n\;r} \right)^{\frac{p}{2} - 1}\; \frac{\theta}{\{k\;(r-1)\;\sigma^2_1 + \sigma_k^2\;\}^{\;p/2}} \;\;\;. 
\end{equation*}}
\end{corollary}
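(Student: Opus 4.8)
The plan is to specialize the refined bound~\eqref{eq:BE_sharp}, obtained within the proof of Theorem~\ref{theo:BE_bound}, to the class of equireplicate and linear designs. Recall that \eqref{eq:BE_sharp} is sharper than~\eqref{eq:berry_esseen} precisely when there exists $c \in \{1,\ldots,k-1\}$ with $f_c > 0$ and $\sigma_c^2 > 0$, so the first step is to verify that this hypothesis holds here. Since $D^{\diamond}$ is linear, any two distinct blocks share at most one index, so $f_c = 0$ for all $c \in \{2,\ldots,k-1\}$, while $f_1 > 0$ whenever $r \geq 2$ (pairs of distinct blocks sharing a common index do occur). As Corollary~\ref{cor:BE_equi_lin} is stated in the non-degenerate regime, $\sigma_1^2 > 0$, and hence \eqref{eq:BE_sharp} is applicable with $c = 1$.

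Next I would substitute the regularity of the equireplicate hypergraph, $\Delta(\mathcal{D}^{\diamond}) = \bar{d}(\mathcal{D}^{\diamond}) = r$, into~\eqref{eq:BE_sharp}. The only remaining quantity to evaluate is $\sum_{c=0}^k f_c \sigma_c^2$. Using linearity again, this sum collapses to $f_1 \sigma_1^2 + f_k \sigma_k^2$, since $f_0$ multiplies $\sigma_0^2 = 0$ and the intermediate $f_c$ vanish. The count $f_k = |D^{\diamond}| = nr/k$ follows because $|S_i \cap S_j| = k$ forces $S_i = S_j$, so these are exactly the diagonal pairs. For $f_1$, I would invoke the line-graph computation already carried out in the proof of Lemma~\ref{lemma:var_lin_hyper}: the line graph $L(\mathcal{D}^{\diamond})$ is $\{k(r-1)+1\}$-regular by Lemma~\ref{lemma:maxdeg}, so each of its $|D^{\diamond}|$ vertices has exactly $k(r-1)$ off-diagonal neighbors, each corresponding to a pair of blocks with a single shared index. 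Hence $f_1 = |D^{\diamond}|\, k(r-1) = nr(r-1)$, and therefore
\begin{equation*}
\sum_{c=0}^k f_c \sigma_c^2 = nr(r-1)\sigma_1^2 + \frac{nr}{k}\sigma_k^2 = \frac{nr}{k}\bigl\{k(r-1)\sigma_1^2 + \sigma_k^2\bigr\}.
\end{equation*}

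The final step is a direct algebraic substitution of $\Delta(\mathcal{D}^{\diamond}) = \bar{d}(\mathcal{D}^{\diamond}) = r$ and of the displayed expression for $\sum_c f_c \sigma_c^2$ into~\eqref{eq:BE_sharp}. The numerator factor $n\bar{d}(\mathcal{D}^{\diamond})/k = nr/k$ combines with the $(nr/k)^{-p/2}$ arising from the denominator to produce $(nr/k)^{1-p/2} = (k/(nr))^{p/2-1}$, while the bracketed term $\{k(r-1)\sigma_1^2 + \sigma_k^2\}$ is raised to the power $p/2$, yielding exactly the stated bound. This mirrors the derivation of Corollary~\ref{cor:BE_equi}, except that the crude variance lower bound from Lemma~\ref{lemma:ord_var} is replaced by the exact linear-design variance of Lemma~\ref{lemma:var_lin_hyper}.

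I do not anticipate a genuine obstacle, as this is a corollary whose content is essentially bookkeeping: the only points demanding care are the verification that \eqref{eq:BE_sharp} is applicable (which fails at $r=1$, where $D^{\diamond}$ is a disjoint union of blocks and one instead recovers Corollary~\ref{cor:BE_equi}; reassuringly, the stated bound reduces to~\eqref{eq:be_equi} when $r=1$, since $k(r-1)=0$), together with the correct tallying of the $f_c$ via the regularity of $L(\mathcal{D}^{\diamond})$ and the careful tracking of the exponents in the final simplification.
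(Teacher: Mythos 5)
Your proposal is correct and follows essentially the same route as the paper: apply the refined bound~\eqref{eq:BE_sharp}, use linearity and the $\{k(r-1)+1\}$-regularity of $L(\mathcal{D}^{\diamond})$ to get $f_1=|D^{\diamond}|\,k(r-1)$, $f_k=|D^{\diamond}|$ and hence $\sum_c f_c\sigma_c^2=|D^{\diamond}|\{k(r-1)\sigma_1^2+\sigma_k^2\}$, then substitute $\Delta(\mathcal{D}^{\diamond})=\bar d(\mathcal{D}^{\diamond})=r$ and $|D^{\diamond}|=nr/k$. Your extra care about the applicability of~\eqref{eq:BE_sharp} at $r=1$ and in the degenerate case is a harmless refinement; the paper makes the same consistency observation (the bound collapses to~\eqref{eq:be_equi} when $\sigma_1^2=0$).
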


\begin{proof}

If $D^{\diamond}$ is both $r$-equireplicate and linear, by Lemma \ref{lemma:var_lin_hyper} we can conclude that, for any incomplete U-statistics based on $D^{\diamond}$, $f_1 = |D^{\diamond}| \; k\;(r-1)$ and $f_k = |D^{\diamond}|$. Moreover, all the other $f_c$ values are zero by definition of a linear design. Then, it follows that 

\begin{equation}\label{eq:sum_equi_lin}
  \sum_{c=0}^{k} f_c \;\sigma_c^2 = |D^{\diamond}| \; \{k\;(r-1) \sigma^2_1 + \sigma^2_k\;\}\;.  
\end{equation}

At this point, since all the assumptions of Theorem \ref{theo:BE_bound} are met, the bound in \eqref{eq:BE_sharp} holds. To conclude the proof, we just need to notice that $\Delta(\mathcal{D}^{\diamond}) = \Bar{d}(\mathcal{D}^{\diamond}) = r$ since $D^{\diamond}$ is $r$-equireplicate and substitute the value of equation \eqref{eq:sum_equi_lin} in \eqref{eq:BE_sharp}. Then, knowing that $|D^{\diamond}| = \frac{nr}{k}$ by equation \eqref{eq:equirep} in the main text and after some manipulations, we obtain

\footnotesize{
\begin{equation*}\label{eq:be_equi_lin}
\sup _z\left|P\left(\frac{U_{n,D^{\diamond}}^{(k)} - \mu_{k}}{\sqrt{\operatorname{Var}U_{n,D^{\diamond}}^{(k)}}} \leq z\right)-\Phi(z)\right| \leq 75 \; \{k \;(r-1) + 1\}^{5(p-1)}\; \left(\frac{k}{n\;r} \right)^{\frac{p}{2} - 1}\; \frac{\theta}{\{k\;(r-1)\;\sigma^2_1 + \sigma_k^2\;\}^{\;p/2}} \;\;\;, 
\end{equation*}}

\normalsize
which ends the proof. As expected, in the degenerate case i.e., when $\sigma^2_1 =0$, the above bound coincides with \eqref{eq:be_equi}. 
    
\end{proof}

\subsection{Proof of Theorem \ref{theo:CLT}}

\begin{proof}
We assumed $0 < \sigma^{2}_{k} < \infty$ for all $k$ and that there exists $\epsilon > 0$ such that $E\left[\left|h_k(S) - \mu_k\right|^{2+\epsilon} \right] \leq \theta_k$ with $\theta_k>0$ for all $k$. Thus, all the assumptions of Theorem \ref{theo:BE_bound} are met—considering $p = 2+ \epsilon$ with $ 0 < \epsilon \leq 1$—and we can conclude that:

\small{
\begin{equation}\label{eq:pre_CLT_inf}
\sup _z\left|P\left(\frac{U_{n,D^{(k)}_n}^{(k)} - \mu_{k}}{\sqrt{\operatorname{Var}U_{n,D^{(k)}_n}^{(k)}}} \leq z\right)-\Phi(z)\right| \leq C \; \{k \;(\Delta(\mathcal{D}^{(k)}_n)-1) + 1\}^{5\;(1 + \epsilon)}\; \left(\frac{k}{n\;\Bar{d}(\mathcal{D}^{(k)}_n)} \right)^{\frac{\epsilon}{2}} \; \theta_{k}\;, 
\end{equation}
}
\normalsize where $C= 75 \; \sigma_k^{-(2+\epsilon)}$. Now, by the same reasoning outlined in section \ref{subsec:ustat}, we also know that $0 < \sigma^{2}_{k} < \infty$ for all $k$ implies $0 < \operatorname{Var}U_{n,D^{(k)}_n}^{(k)} < \infty$, even as $k$ and $n$ diverge. Moreover, by assumption, we know that $\max\{k,\Delta(\mathcal{D}^{(k)}_n),\theta_{k} \}=O(\log^q(n))$ with $q >0$. At this point, if we let $n \rightarrow \infty$, we obtain that

\begin{equation*}
\sup _z\left|P\left(\frac{U_{n,D^{(k)}_n}^{(k)} - \mu_{k}}{\sqrt{\operatorname{Var}U_{n,D^{(k)}_n}^{(k)}}} \leq z\right)-\Phi(z)\right| \rightarrow 0 \;, 
\end{equation*}

since the $n^{\frac{\epsilon}{2}}$ term in the denominator has the highest order among all the other terms in \eqref{eq:pre_CLT_inf}. This is because $\{k \;(\Delta(\mathcal{D}_n)-1) + 1\}^{5\;(1 + \epsilon)} \; k^{\epsilon/2} \; \theta_{k} = O(\log^{q^{\star}}(n))$, with $q^{\star} = q\;(11+ 21\epsilon/2)$, and since $\Bar{d}(\mathcal{D}^{(k)}_n) = O(\log^q(n))$ because it is always true that $\Bar{d}(\mathcal{D}^{(k)}_n) \leq \Delta(\mathcal{D}^{(k)}_n)$. This concludes the proof, because the previous uniform convergence result implies 

\begin{equation*}
\frac{U_{n,D^{(k)}_n}^{(k)} - \mu_{k}}{\sqrt{\operatorname{Var}U_{n,D^{(k)}_n}^{(k)}}} \xrightarrow{\;\;d\;\;} \mathcal{N}(0,1) \;. 
\end{equation*}

Finally, we underline that allowing $\max\{k,\;\Delta(\mathcal{D}^{(k)}_n),\; \theta_k\}=O(n^{1/q})$, with $q > 22/\epsilon + 21$, still ensures a standard Gaussian limiting distribution for the centered and rescaled incomplete U-statistics, provided that the other conditions of Theorem~\ref{theo:CLT} are satisfied. This is because $\{k \;(\Delta(\mathcal{D}_n)-1) + 1\}^{5\;(1 + \epsilon)} \; k^{\epsilon/2} \; \theta_{k} = o(n^{\frac{\epsilon}{2}})$ under the previously stated condition. Thus, we can allow a growth faster than logarithmic even if, for practical values of $n$, the difference is negligible. 
\end{proof}

\subsection{Proof of Proposition \ref{prop:sig_k}}
\begin{proof}
    First of all, note that the random variables in the set $\left\{ h(S), \;S \in D^{\perp}_n \right\}$ are identically distributed since $X_1, \ldots, X_n$ are i.i.d., and $h$ is a fixed, symmetric and measurable kernel function. Moreover, the random variables in the previously defined set are also independent. This is because $D^{\perp}_n$ is a sequence of $1$-equireplicate designs of size $n/k$ and thus-by construction-the random variables cannot have indices in common. Consequently, we can use well-known results that hold for i.i.d. random variables when $\sigma^2_k < \infty$, which is an assumption of Corollary \ref{cor:CLT_equi}. Indeed, we can write that:

\begin{equation*}
    \begin{aligned}
s^2_k & = \frac{1}{|D^{\perp}_n| - 1}\left\{\sum_{S \in D^{\perp}_n} \left(h(S) - U_{n, D^{\perp}_n}^{(k)} \right)^2\right\} \\
& = \frac{|D^{\perp}_n|}{|D^{\perp}_n| - 1}  \left\{\frac{\sum_{S \in D^{\perp}_n} h(S)^2}{|D^{\perp}_n|} - \left(U_{n, D^{\perp}_n}^{(k)}\right)^2 \right\} 
\end{aligned}
\end{equation*}

Now, for the SLLN $U_{n, D^{\perp}_n}^{(k)} \xrightarrow{\text { a.s. }} \mu_k$ and $\frac{\sum_{S \in D^{\perp}_n} h(S)^2 }{|D^{\perp}_n|}  \xrightarrow{\text { a.s. }} \mathbb{E}\left[ h(S)^2\right] = \sigma^2_k + \mu^2_k $. Then, we apply the continuous mapping theorem to obtain $\left(U_{n, D^{\perp}_n}^{(k)}\right)^2 \xrightarrow{\text { a.s. }} \mu^2_k$. At this point, by making use of all previous results, we can conclude that $s^2_k \xrightarrow{\text { a.s. }} \sigma^2_k$. This ends the proof since almost sure convergence implies $s^2_k \xrightarrow{p} \sigma^2_k$.   
\end{proof}

\subsection{Statement and proof of Corollary \ref{cor:CLT_equi_lin}}

\begin{corollary}\label{cor:CLT_equi_lin}\textnormal{(CLT for Incomplete U-statistics of Equireplicate and Linear Designs).}\\ 
Let $\{ h(S), \;S \in D^{\diamond}_n \}$ be a sequence of sets of random variables, with each set indexed by the vertices of its dependency graph $L(\mathcal{D}^{\diamond}_n)$, with $D^{\diamond}_n$ being a sequence of equireplicate and linear designs of growing size that identifies the sequence of hypergraphs $\mathcal{D}^{\diamond}_n = (V,D^{\diamond}_n)$. Moreover, assume that $0 < \sigma^{2}_{k} < \infty$, that there exists $\epsilon > 0$ such that $E\left[\left|h(S) - \mu_k\right|^{2+\epsilon} \right] \leq \theta$ with $\theta>0$ and that $r_n =O(\log^q(n))$ with $q >0$. Then, as $n \rightarrow \infty$, we have

\begin{equation}\label{eq:clt_equi_lin}
\sqrt{n\;r_n} \;\;\; \frac{U_{n,D^{\diamond}_n}^{(k)} - \mu_{k}}{\sqrt{k^2(r_n-1) \;\sigma^2_1 + k\;\sigma^2_k }} \xrightarrow{\;\;d\;\;} \mathcal{N}(0,1) \;. 
\end{equation}

\end{corollary}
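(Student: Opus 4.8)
The plan is to derive the statement directly from Theorem~\ref{theo:CLT} by replacing its generic variance scaling with the exact variance formula available for equireplicate and linear designs in Lemma~\ref{lemma:var_lin_hyper}. First I would check that the hypotheses of Theorem~\ref{theo:CLT} hold for the sequence $D^{\diamond}_n$. Here $k$ is fixed and $\theta$ does not depend on $n$, so $k = O(\log^q(n))$ and $\theta = O(\log^q(n))$ are immediate. Since each $D^{\diamond}_n$ is $r$-equireplicate, its hypergraph $\mathcal{D}^{\diamond}_n$ is $r$-regular and hence $\Delta(\mathcal{D}^{\diamond}_n) = r_n$; the assumed condition $r_n = O(\log^q(n))$ therefore supplies the remaining growth requirement $\Delta(\mathcal{D}^{\diamond}_n) = O(\log^q(n))$. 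The moment and nondegeneracy conditions $0 < \sigma_k^2 < \infty$ and $E[|h(S)-\mu_k|^{2+\epsilon}] \leq \theta$ are assumed directly, so Theorem~\ref{theo:CLT} applies and gives $(U_{n,D^{\diamond}_n}^{(k)} - \mu_k)/\sqrt{\operatorname{Var}U_{n,D^{\diamond}_n}^{(k)}} \xrightarrow{d} \mathcal{N}(0,1)$.

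Next I would substitute the exact variance. Because $D^{\diamond}_n$ is both equireplicate and linear, Lemma~\ref{lemma:var_lin_hyper} yields
\begin{equation*}
\operatorname{Var}U_{n,D^{\diamond}_n}^{(k)} = \frac{k^2(r_n-1)\sigma_1^2 + k\,\sigma_k^2}{n\,r_n},
\end{equation*}
whence $\sqrt{\operatorname{Var}U_{n,D^{\diamond}_n}^{(k)}} = \sqrt{k^2(r_n-1)\sigma_1^2 + k\,\sigma_k^2}\,/\sqrt{n\,r_n}$. Rewriting the standardized statistic from Theorem~\ref{theo:CLT} with this expression reproduces exactly the left-hand side of \eqref{eq:clt_equi_lin}, which completes the argument.

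The proof is essentially bookkeeping, with only two points needing care. First, the linearity hypothesis is what allows Lemma~\ref{lemma:var_lin_hyper} to furnish an exact variance rather than merely the bounds of Lemma~\ref{lemma:ord_var}; I would emphasize that this is built into the corollary's assumptions and that, by Remark~\ref{remark:clt_equi_lin}, the $k=2$ case is automatically covered since every equireplicate design is then linear. Second, the normalization must be well defined, i.e.\ the denominator $k^2(r_n-1)\sigma_1^2 + k\,\sigma_k^2$ must be strictly positive; this holds because $\sigma_k^2 > 0$ forces $k\,\sigma_k^2 > 0$ regardless of whether $\sigma_1^2$ vanishes. I do not expect any genuine analytic obstacle, since the difficult work already resides in Theorem~\ref{theo:CLT} and Lemma~\ref{lemma:var_lin_hyper}; this result simply specializes the variance scaling, and in the degenerate subcase $\sigma_1^2 = 0$ it collapses to Corollary~\ref{cor:CLT_equi}.
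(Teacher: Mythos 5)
Your proposal is correct and follows the same route as the paper: verify the hypotheses of Theorem~\ref{theo:CLT} (with $\Delta(\mathcal{D}^{\diamond}_n)=r_n$ by regularity), then substitute the closed-form variance from Lemma~\ref{lemma:var_lin_hyper} into \eqref{eq:clt} and rearrange. Your additional remarks on positivity of the normalization and on the degenerate subcase collapsing to Corollary~\ref{cor:CLT_equi} are consistent with the paper's own discussion.
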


\begin{proof}
    The proof follows by Theorem~\ref{theo:CLT}, substituting the closed-form expression for the variance of the class of equireplicale and linear designs (presented in Lemma \ref{lemma:var_lin_hyper}) in \eqref{eq:clt} of the main text. Corollary \ref{cor:CLT_equi_lin} is valid in the non-degenerate case, where it matches an existing result of \cite{brown1978reduced} for $k=2$, as well as in the degenerate case i.e., when $\sigma^2_1 = 0$, where it matches \eqref{eq:clt_dege_equi} in the main text. 

\end{proof}

\section{Efficient Construction of Equireplicate Designs}\label{supp:sec4}

\begin{remark}[Computational complexity and the replication parameter $r$]\label{remark:supp_r}
    When $k$ is fixed, constructing equireplicate designs in linear time with respect to the design size implies (by equation \eqref{eq:equirep} in the main text) that the choice of the replication parameter $r$ directly determines the computational complexity of the incomplete U-statistics. This complexity can range from linear in the number of observations $n$, when $r$ is fixed, up to the polynomial complexity $O(n^k)$ of the complete U-statistic, when $r = \binom{n-1}{k-1}$, since $|\mathcal{B}_k| = \binom{n}{k}$. In Algorithm \ref{alg:k>2Design}, we show a construction that allows $r = O(n)$ when $k$ is fixed.   
\end{remark}

\subsection{Construction of r-equireplicate designs when k = 2}\label{supp:sec4_k2}

To facilitate our construction of equireplicate designs, we first introduce the new concept of an \emph{equireplicate partition}, which partitions $\mathcal{B}_2$ into disjoint equireplicate designs.
\begin{definition}\label{def:equi_part}
[Equireplicate Partition]
Let $n$ and $r$ be given. A partition $G_1,\ldots, G_{(n-1)/r}$ of $\mathcal{B}_2$ is an $r$-\emph{equireplicate partition} if each subset $G_g$ is an $r$-equireplicate design. 
\end{definition}

Note that if $G_1,\ldots, G_{(n-1)/r}$ is an $r$-equireplicate partition, then the union of any $q$ of the subsets yields a $qr$-equireplicate design. For $n$ even, this implies that from a $1$-equireplicate partition, we can produce an $r$-equireplicate design for any $r \in \{1,2,\ldots\, n-1\}$. For $n$ odd, it is not possible to have a $1$-equireplicate design as the size would be $n/2$, which is not an integer. Indeed, the only possible $r$-equireplicate designs in this case are for $r$ even. Thus, for $n$ odd, we can consider a $2$-equireplicate partition that enables the construction of an $r$-equireplicate design for any $r \in \{2,4,6,\ldots\, n-1\}$, still by unioning the subsets of the partition.

Based on the above discussion, in Theorem \ref{thm:evenPartition} we construct a $1$-equireplicate partition for $n$ even, and in Theorem \ref{thm:oddPartition} the construction a $2$-equireplicate partition for $n$ odd. To aid understanding of our constructions, we provide a visual representation of the $1$-equireplicate partition in Figure \ref{fig:1fact_K6} for $n=6$ and of the $2$-equireplicate partition in Figure \ref{fig:2fact_K7} when $n=7$.

\begin{figure}[t]
    \centering
   \begin{tikzpicture}[scale=3, every node/.style={circle, fill=white, draw, minimum size=6mm}]
 
  \tikzset{
    classA/.style={draw=red,             thick},                 
    classB/.style={draw=blue,            thick, dashed},         
    classC/.style={draw=violet,          thick, dotted},         
    classD/.style={draw=brown!60!yellow, thick, dash dot},       
    classE/.style={draw=teal,            thick, dash dot dot},   
  }

  \foreach \i in {1,...,6} {
    \node (N\i) at ({360/6 * (\i - 1)}:1) {\i};
  }

  \foreach \i in {1,...,6} {
    \foreach \j in {1,...,6} {
      \ifnum\i<\j
        \draw[gray!20] (N\i) -- (N\j);
      \fi
    }
  }

  \draw[classA] (N1) -- (N6);
  \draw[classA] (N2) -- (N5);
  \draw[classA] (N3) -- (N4);

  \draw[classB] (N2) -- (N6);
  \draw[classB] (N3) -- (N1);
  \draw[classB] (N4) -- (N5);

  \draw[classC] (N3) -- (N6);
  \draw[classC] (N4) -- (N2);
  \draw[classC] (N5) -- (N1);

  \draw[classD] (N4) -- (N6);
  \draw[classD] (N5) -- (N3);
  \draw[classD] (N1) -- (N2);

  \draw[classE] (N5) -- (N6);
  \draw[classE] (N1) -- (N4);
  \draw[classE] (N2) -- (N3);
\end{tikzpicture}

\vspace{1cm}

\begin{tikzpicture}
  \def\boxsize{0.7}
  \def\spacing{1.0}

  \tikzset{
    classA/.style={draw=red,             thick},               
    classB/.style={draw=blue,            thick, dashed},       
    classC/.style={draw=violet,          thick, dotted},       
    classD/.style={draw=brown!60!yellow, thick, dash dot},     
    classE/.style={draw=teal,            thick, dash dot dot}, 
  }

  \def\rectA{{1,6, 2,5, 3,4}}
  \def\rectB{{2,6, 3,1, 4,5}}
  \def\rectC{{3,6, 4,2, 5,1}}
  \def\rectD{{4,6, 5,3, 1,2}}
  \def\rectE{{5,6, 1,4, 2,3}}

  \foreach \rect/\values/\style in {
      0/\rectA/classA,
      1/\rectB/classB,
      2/\rectC/classC,
      3/\rectD/classD,
      4/\rectE/classE} {

    \foreach \i in {0,...,5} {
      \pgfmathtruncatemacro{\row}{int(\i / 2)}
      \pgfmathtruncatemacro{\col}{mod(\i, 2)}
      \pgfmathsetmacro{\xoffset}{\rect * (2 * \boxsize + \spacing)}
      \pgfmathsetmacro{\x}{\xoffset + \col * \boxsize}
      \pgfmathsetmacro{\y}{-\row * \boxsize}
      \pgfmathtruncatemacro{\val}{\values[\i]}

      \filldraw[\style, fill=white]
        (\x, \y) rectangle ++(\boxsize, -\boxsize);

      \node[text=black] at (\x + 0.5*\boxsize, \y - 0.5*\boxsize)
        {\bfseries \scriptsize \val};
    }
  }
\end{tikzpicture}

    \caption{$1$-Equireplicate Partition for $n=6$ and its representation as a proper edge coloring of $K^{(2)}_6$. The construction of the matchings can be understood by holding ``6'' fixed, and rotating the other numbers clock-wise, which is equivalent to the construction in Theorem \ref{thm:evenPartition}.}
    \label{fig:1fact_K6}
\end{figure}

\begin{figure}[t]
    \centering
\begin{tikzpicture}

  \tikzset{
    classA/.style={draw=red,         thick},          
    classB/.style={draw=blue,        thick, dashed},  
    classC/.style={draw=forestgreen, thick, dotted}   
  }

  \begin{scope}[xshift=2cm, yshift=0cm, scale=3, every node/.style={circle, fill=white, draw, minimum size=6mm}]
    \foreach \i in {1,...,7} {
      \node (N\i) at ({360/7 * (\i - 1)}:1) {\i};
    }

    \foreach \i in {1,...,7} {
      \foreach \j in {1,...,7} {
        \ifnum\i<\j
          \draw[gray!20] (N\i) -- (N\j);
        \fi
      }
    }

    \draw[classA] (N1) -- (N2);
    \draw[classA] (N2) -- (N3);
    \draw[classA] (N3) -- (N4);
    \draw[classA] (N4) -- (N5);
    \draw[classA] (N5) -- (N6);
    \draw[classA] (N6) -- (N7);
    \draw[classA] (N7) -- (N1);

    \draw[classB] (N1) -- (N3);
    \draw[classB] (N2) -- (N4);
    \draw[classB] (N3) -- (N5);
    \draw[classB] (N4) -- (N6);
    \draw[classB] (N5) -- (N7);
    \draw[classB] (N6) -- (N1);
    \draw[classB] (N7) -- (N2);

    \draw[classC] (N1) -- (N4);
    \draw[classC] (N2) -- (N5);
    \draw[classC] (N3) -- (N6);
    \draw[classC] (N4) -- (N7);
    \draw[classC] (N5) -- (N1);
    \draw[classC] (N6) -- (N2);
    \draw[classC] (N7) -- (N3);
  \end{scope}

  \begin{scope}[xshift=7.5cm, yshift=1.5cm] 
    \def\boxsize{0.7}
    \def\spacing{1.0}

    \def\rectA{{1,2, 2,3, 3,4, 4,5, 5,6, 6,7, 7,1}} 
    \def\rectB{{1,3, 2,4, 3,5, 4,6, 5,7, 6,1, 7,2}} 
    \def\rectC{{1,4, 2,5, 3,6, 4,7, 5,1, 6,2, 7,3}} 

    \foreach \rect/\values/\style in {
        0/\rectA/classA,
        1/\rectB/classB,
        2/\rectC/classC} {

      \foreach \i in {0,...,13} {
        \pgfmathtruncatemacro{\row}{int(\i / 2)}
        \pgfmathtruncatemacro{\col}{mod(\i, 2)}
        \pgfmathsetmacro{\xoffset}{\rect * (2 * \boxsize + \spacing)}
        \pgfmathsetmacro{\x}{\xoffset + \col * \boxsize}
        \pgfmathsetmacro{\y}{-\row * \boxsize}
        \pgfmathtruncatemacro{\val}{\values[\i]}

        \filldraw[\style, fill=white]
          (\x, \y) rectangle ++(\boxsize, -\boxsize);

        \node[text=black] at (\x + 0.5*\boxsize, \y - 0.5*\boxsize)
          {\bfseries \scriptsize \val};
      }
    }
  \end{scope}
\end{tikzpicture}
    \caption{$2$-Equireplicate Partition for $n=7$ and its representation as a decomposition of $K^{(2)}_7$ into disjoint cycles. The construction of the partition is done by holding the left column fixed and ``rotating'' the right column vertically, which is equivalent to the construction in Theorem \ref{thm:oddPartition}. }
    \label{fig:2fact_K7}
\end{figure}
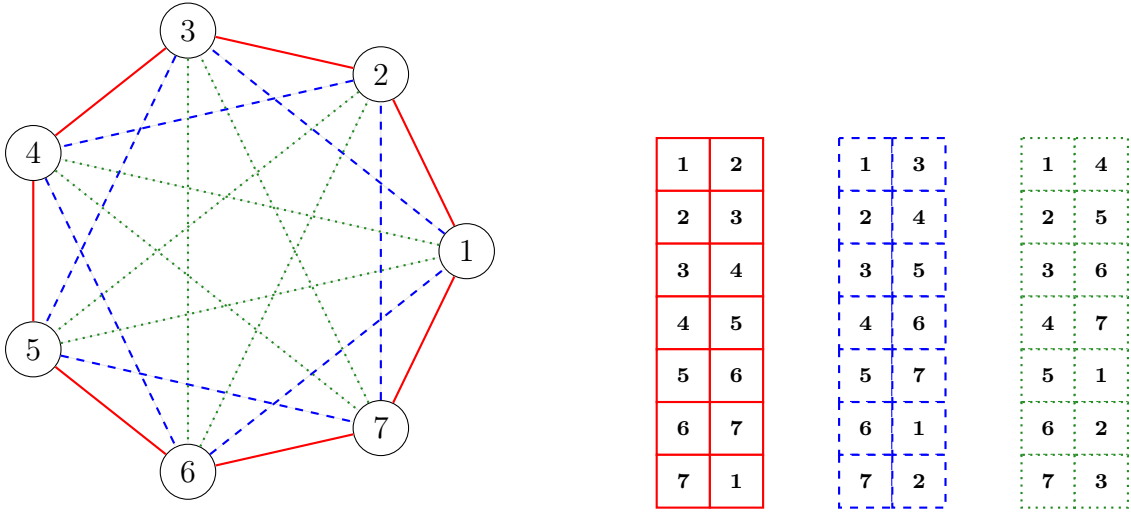

\subsubsection{Statement and proof of Theorem \ref{thm:evenPartition}}
\begin{theorem}
    [1-Equireplicate Partition for $n$ Even]
    \label{thm:evenPartition}
    Let $n$ be a positive even integer. Then there exists a $1$-equireplicate partition  of $\mathcal{B}_2$. One such construction of $G_1,\ldots, G_{(n-1)}$ is as follows:

    The subset $G_g$, for $g=1,\ldots, (n-1)$, consists of the pairs $(i,p_g(i))$ where $i\in \{1,2,\ldots,n-1\}$ and $p_g:\{1,\ldots, n-1\}\rightarrow \{1,\ldots, n\}$ is defined as follows:
    \[p_g(i)=\begin{cases}-i+2g\ \pmod{n-1},&i\neq g\pmod{n-1},\\
    n,& i=g,\end{cases}\]
    where if $-i+2g \pmod{n-1}=0$, we set the value to $n-1$. 
    Note that each pair $(i,j) \in \mathcal{B}_2$ is listed twice within its subset, except for $(g,n)$ which appears only once.
\end{theorem}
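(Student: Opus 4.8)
The plan is to recognize this construction as the classical ``circle'' (round-robin) $1$-factorization of the complete graph $K_n^{(2)}$ and to verify directly the two defining properties of an $r$-equireplicate partition from Definition~\ref{def:equi_part}: (i) each $G_g$ is a $1$-equireplicate design, i.e.\ a perfect matching on $\{1,\ldots,n\}$, and (ii) the family $\{G_g\}_{g=1}^{n-1}$ partitions $\mathcal{B}_2$. The single arithmetic fact driving everything is that $n$ even forces $n-1$ to be odd, so $\gcd(2,n-1)=1$ and $2$ is invertible in $\mathbb{Z}_{n-1}$; I would isolate this observation at the outset, since both properties depend on it, and keep the special vertex $n$ rigidly separate from the $\mathbb{Z}_{n-1}$ arithmetic.

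For property (i), I would fix $g$ and study the map $\tau_g:i\mapsto -i+2g \pmod{n-1}$ on $\mathbb{Z}_{n-1}$. First, $\tau_g$ is an involution, since $\tau_g(\tau_g(i)) = -(-i+2g)+2g \equiv i \pmod{n-1}$. Next, its fixed points solve $2i\equiv 2g \pmod{n-1}$; because $2$ is a unit, the unique solution is $i\equiv g$. Hence $\tau_g$ restricts to a fixed-point-free involution on the $n-2$ remaining residues, pairing them into $(n-2)/2$ transpositions, while the lone fixed residue $g$ is matched to the special vertex $n$ by definition of $p_g$. The involution property also guarantees the pairing is consistent (if $p_g(i)=j\neq n$ then $p_g(j)=i$), so $G_g$ consists of $(n-2)/2+1 = n/2$ unordered pairs that cover each element of $\{1,\ldots,n\}$ exactly once. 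Thus $G_g$ is a perfect matching, hence a $1$-equireplicate design.

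For property (ii), I would show every pair of $\mathcal{B}_2$ occurs in exactly one $G_g$, in two cases. A pair $\{i,n\}$ containing the special vertex appears in $G_g$ precisely when $i\equiv g$, so it lies in $G_i$ and nowhere else. A pair $\{a,b\}$ with $a,b\in\{1,\ldots,n-1\}$ and $a\neq b$ appears in $G_g$ iff $b\equiv -a+2g \pmod{n-1}$, i.e.\ $2g\equiv a+b \pmod{n-1}$; invertibility of $2$ gives the unique solution $g\equiv 2^{-1}(a+b)$. I would then confirm this $g$ differs from both endpoints: $g\equiv a$ would force $a\equiv b$, contradicting $a\neq b$ (and symmetrically for $b$), so neither endpoint is diverted to $n$ and the pair genuinely appears in $G_g$. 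Existence and uniqueness of $g$ in each case establish that $\{G_g\}_{g=1}^{n-1}$ partitions $\mathcal{B}_2$.

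Finally, the closing remark on multiplicities is immediate bookkeeping: since $G_g$ is generated by listing $(i,p_g(i))$ as $i$ ranges over $\{1,\ldots,n-1\}$, any matched pair $\{a,b\}$ with $a,b\neq g$ is recorded twice—once as $(a,b)$ and once as $(b,a)$—whereas $\{g,n\}$ is recorded only once, because $n$ never serves as an iteration index. I do not anticipate a deep obstacle; the only genuine care required is the $0\mapsto n-1$ relabeling convention together with the separation of the vertex $n$ from the cyclic arithmetic, and both become routine once the invertibility of $2$ modulo $n-1$ is in place.
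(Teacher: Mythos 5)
Your proof is correct and follows essentially the same route as the paper's: both rest on the involution property of $i \mapsto -i+2g \pmod{n-1}$, the uniqueness of its fixed point $i=g$ (via $2$ being a unit modulo the odd number $n-1$), and the resulting structure of each $G_g$ as a perfect matching. The only cosmetic difference is at the end, where the paper deduces the partition property from disjointness plus a count ($n-1$ disjoint matchings of $n/2$ pairs each exhaust all $\binom{n}{2}$ pairs), whereas you solve $2g\equiv a+b \pmod{n-1}$ directly to exhibit the unique $G_g$ containing each pair; both arguments are valid.
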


\begin{proof}
    First, we establish some basic properties about the maps $p_g$, $g=1,\ldots, n-1$.

\begin{enumerate}[label=(\alph*)]
    \item The map $p'_g(i)=-i+2g \pmod{n-1}$  on $\{1,2,\ldots, n-1\}$ is its own inverse: 
    \[-(-i+2g)+2g\pmod{n-1}=i-2g+2g\pmod{n-1}=i\pmod{n-1}.\]
    \item $p'_g(i)$ has unique fixed point $i=g$: Suppose that $i=-i+2g\pod{n-1}$. This implies that $2i=2g\pmod{n-1}$ and since $n-1$ is odd this implies that $i=g \pmod{n-1}$.
    \item $p'_g$ maps $i$ to distinct values for different $g$: Suppose to the contrary that $-i+2g=-i+2g'\pmod{n-1}$. Then $2g=2g'\pmod{n-1}$ and since $n-1$ is odd,  this implies that $g=g'\pmod{n-1}$. 
    \end{enumerate}
    We see that each subset $G_g$ contains $n/2$ pairs, since $p'_g(i)$ has $n-1$ inputs, a unique fixed point, and is its own inverse. Furthermore, by construction each $G_g$ contains exactly one pair with the value $i$, since $p'_g$ is its own inverse. By property (c) above, we have that $G_g$ and $G_g'$ are disjoint. 

    Since there are $n-1$ disjoint subgroups, each with $n/2$ pairs, and each pair occurs at most once, it follows that all $n(n-1)/2$ pairs are accounted for. Thus, we have a $1$-equireplicate partition  of $\mathcal{B}_2$.
\end{proof}

\subsubsection{Statement and proof of Theorem \ref{thm:oddPartition}}
\begin{theorem}
    [2-Equireplicate Partition for $n$ Odd]\label{thm:oddPartition}
    Let $n$ be a positive odd integer. Then there exists a $2$-equireplicate partition of $\mathcal{B}_2$. One such construction of $G_1,\ldots, G_{(n-1)/2}$ is as follows:

    For $g=1,\ldots,(n-1)/2$, define
    \[G_g = \{\{i,j\}\in \mathcal{B}_2| (i-j)=\pm g\pmod{n}\}.\] 
\end{theorem}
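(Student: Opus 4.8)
The plan is to exhibit a natural invariant---the \emph{circular difference} of a pair---that assigns each element of $\mathcal{B}_2$ to exactly one block $G_g$, and then to verify directly that every index has degree exactly $2$ in each $G_g$.

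First I would formalize the circular difference. For a pair $\{i,j\}\in\mathcal{B}_2$ with $i\neq j$, set $d=(i-j)\pmod n\in\{1,\ldots,n-1\}$; reversing the roles of $i$ and $j$ replaces $d$ by $(j-i)\pmod n=n-d$, so the unordered value $\{d,n-d\}$ is an intrinsic property of the pair. Because $n$ is odd, $d=n-d$ is impossible (it would force $2d=n$), so $\{d,n-d\}$ consists of two distinct elements of $\{1,\ldots,n-1\}$, exactly one of which lies in $\{1,\ldots,(n-1)/2\}$. Defining $g\defeq\min\{d,n-d\}$ thus gives a well-defined map $\mathcal{B}_2\to\{1,\ldots,(n-1)/2\}$, and by construction $G_g$ is precisely the fiber over $g$ (the condition $(i-j)\equiv\pm g\pmod n$ says exactly that the unordered difference of $\{i,j\}$ equals $\{g,n-g\}$). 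Since every pair lies in exactly one fiber, the $G_g$ are pairwise disjoint and their union is all of $\mathcal{B}_2$, establishing the partition.

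Next I would check that each $G_g$ is $2$-equireplicate, i.e., that every index $i\in V$ occurs in exactly two pairs of $G_g$. A pair $\{i,j\}$ lies in $G_g$ if and only if $j\equiv i-g$ or $j\equiv i+g\pmod n$, so the only candidates are $j_1=(i-g)\pmod n$ and $j_2=(i+g)\pmod n$. The key step is to confirm these give two genuinely distinct pairs: $j_1=j_2$ would force $2g\equiv 0\pmod n$, and since $n$ is odd with $0<g<n$ this cannot happen; likewise $j_1=i$ or $j_2=i$ would force $g\equiv 0\pmod n$, again impossible. Hence $i$ appears in exactly the two distinct pairs $\{i,j_1\}$ and $\{i,j_2\}$, so $G_g$ is $2$-equireplicate. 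As a consistency check, this yields $|G_g|=n$, matching $|G_g|\,k=n\,r$ from~\eqref{eq:equirep} with $k=r=2$, and $\sum_g|G_g|=\tfrac{n-1}{2}\cdot n=\binom{n}{2}=|\mathcal{B}_2|$.

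The argument is essentially elementary modular arithmetic; the only place where oddness of $n$ is genuinely needed---and the crux of the proof---is in ruling out $2g\equiv 0\pmod n$, which is precisely what guarantees degree two (rather than one) and forces the replication parameter to be even, consistent with the impossibility of a $1$-equireplicate design for odd $n$ noted in the discussion following Definition~\ref{def:equi_part}.
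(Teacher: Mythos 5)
Your proof is correct and follows essentially the same route as the paper's: both hinge on the two modular-arithmetic facts that $g\not\equiv -g\pmod n$ and $g\not\equiv 0\pmod n$ for $g\in\{1,\ldots,(n-1)/2\}$ with $n$ odd, which give disjointness of the $G_g$ and degree exactly $2$ at every index. Your ``fiber of the circular-difference map'' framing makes the coverage of $\mathcal{B}_2$ explicit (the paper leaves the final counting implicit), but this is an organizational difference, not a different argument.
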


\begin{proof}
First we will establish the following claims:
\begin{enumerate}[label=(\alph*)]
    \item Each $i$ is paired to two distinct values within each $G_g$: Note that for $g\neq -g\; (\mod n)$, since $g\neq 0 \pmod{n}$ and $n$ is odd. Thus, $i+g$ and $i-g \pmod{n}$ are distinct.
    \item No $i$ is ever paired with itself in any $G_g$: Suppose to the contrary that $i\pm g=i \pmod{n}$. But then $\pm g=0\pmod{n}$. However, this contradicts that $g\in \{1,2,\ldots,(n-1)/2\}$ as none of these values are $0\pmod{n}$.
    \item No pair appears in two groups: Let $g,g'\in \{1,2,\ldots, (n-1)/2\}$. Note that $g\neq -g'\pmod{n}$, since $-g'\pmod{n}$ is not a member of the set. Thus, if $i-j=\pm g\pmod{n}$ and $i-j=\pm g'\pmod{n}$, then the signs must be the same. Hence, $g=g'\pmod{n}$. 
\end{enumerate}
We see that each $G_g$ has $n$ pairs, and is a $2$-equireplicate design. Since the $G_g$'s are disjoint, we have a $2$-equireplicate partition of $\mathcal{B}_2$.
\end{proof}

\subsubsection{Proof of Theorem \ref{theo:algo_clt_equi}}
\begin{proof}
We start showing that the output of Algorithm \ref{alg:evenDesign} is an $r$-equireplicate design. Let $p_g$ be as defined in Theorem \ref{thm:evenPartition}. Evaluating at $g+i$, we have:
        \[p_g(g+i)=\begin{cases}
            g-i\pmod{n-1}, & i\neq 0\pmod{n-1}\\
            n, &i=0\pmod{n-1}.
        \end{cases}\]
To avoid duplicates, we restrict $i\in\{1,2,\ldots, n/2-1\}$. We see that Algorithm \ref{alg:evenDesign} produces the union of the first $r$ subsets of the 1-equireplicate partition constructed in Theorem \ref{thm:evenPartition}. Since the $1$-equireplicate partition consists of disjoint 1-equireplicate designs, the output of Algorithm \ref{alg:evenDesign} is an $r$-equireplicate design. 

The output of Algorithm \ref{alg:oddDesign} is the union of the first $r/2$ subsets of the $2$-equireplicate partition constructed in Theorem \ref{thm:oddPartition}. By the same reasoning as in part 1, the result is an $r$-equireplicate design.

Both algorithms can be seen to have computational complexity $O(nr)$ due to the nature of the nested for-loops.

Since the output $D$ of both algorithms is an $r$-equireplicate design for $k=2$, then by definition every $1$-subset of $\{1,2, \ldots, n\}$ is contained in the same number $r$ of pairs of $D$. Thus $D$ is a minimum variance design by Theorem 1 of \cite{lee1990u}, page 195. Moreover, the variance of the incomplete U-statistics induced by $D$ is $\operatorname{Var} U_{n,D}^{(2)} =|D|^{-1} (2(r-1)\sigma_1^2 + \sigma_2^2)$ by \emph{Example 1} in \cite{lee1982incomplete}.       
\end{proof}

\begin{remark}
    An $r$-equireplicate partition can also be interpreted as an $r$-factorization of $K^{(2)}_n$, which is the complete graph with $n$ vertices. This construction decomposes $K^{(2)}_n$ into $r$-regular spanning subgraphs, called $r$-factors, whose edge sets are $r$-equireplicate designs. In particular, when $n$ is even, a $1$-factorization always exists and is equivalent to finding an edge coloring of $K^{(2)}_n$ (\cite{bondy1976graph}, ch.6). In Figure \ref{fig:1fact_K6}, we present the output of \emph{Algorithm 1} when $n=6$ and $r=5$ together with the corresponding edge coloring of $K^{(2)}_6$. On the other hand, when $n$ is odd, $K^{(2)}_n$ cannot be decomposed into $1$-factors but a $2$-factorization always exists and is equivalent to finding disjoint cycles whose union forms $K^{(2)}_n$ \citep{alspach2008wonderful}. In Figure \ref{fig:2fact_K7}, we present the output of \emph{Algorithm 2} for $n=7$ and $r=6$ together with the corresponding disjoint cycles decomposition of $K^{(2)}_7$. In graph theory, the main interest is not only in designing algorithms to build either $r$-factorizations or $r$-factors, but in understanding under which conditions these exist and also how many non-isomorphic factorizations there are (see e.g., ch. 10 of \cite{wallis2016introduction} that discusses the number of possible $1$-factorizations of $K^{(2)}_{2n}$). Thus, our algorithms offer one efficient approach among many potential other ways to construct $r$-equireplicate designs. However, to the best of our knowledge, this is the first comprehensive and constructive treatment for $k = 2$ in the literature of incomplete U-statistics based on deterministic designs.
\end{remark}

\subsection{Construction of r-equireplicate designs when k > 2}\label{supp:sec4_k>2}

In principle, we can extend definition \ref{def:equi_part} to consider an $r$-equireplicate partition of $\mathcal{B}_k$ and aim at building $r$-equireplicate designs by unioning the elements of either a $1$-equireplicate partition if $k \mid n$ or of a $k$-equireplicate partition if $k \nmid n$. However, when $k \mid n$ and $k>2$, even if the existence of a $1$-equireplicate partition is guaranteed by \emph{Baranyai's theorem} \citep{baranyai1975factorization}, there is no known general sequential construction that would allow us to build an $r$-equireplicate design for any $n$ and $r \in \{1,\ldots,\binom{n-1}{k-1} \}$. Indeed, there exist only some efficient algorithmic constructions for $k=3$ and $k=4$ cases \citep{yan2022construction}, but they impose additional divisibility conditions on $n$. The situation becomes even more challenging when $k \nmid n$. In this case, the problem is equivalent to constructing a $k$-factorization of the complete $k$-uniform hypergraph $K_n^{(k)}$. However, even the existence of such a factorization is not guaranteed—let alone an efficient algorithm for its construction—as this remains an open problem in combinatorics (see e.g., \cite{bailey2010hamiltonian, petecki2014cyclic} that discusses cyclic decompositions of $K_n^{(k)}$).

\begin{remark}[Minimum Variance when $k > 2$]
Even if constructing an $r$-equireplicate partition were both feasible and computationally efficient, when $k > 2$ we no longer have the guarantee that the individual $r$-equireplicate designs forming the partition—or \emph{a fortiori}, any union of them—achieve minimum variance. When $k>2$, a sufficient condition for achieving minimum variance is that a design is both equireplicate and linear, as shown in Lemma~\ref{lemma:var_lin_hyper}. We have provided a novel interpretation of this specific class of designs within our equireplicate framework, enabling us to derive a closed-form expression for the variance of the corresponding incomplete U-statistic (see Lemma~\ref{lemma:var_lin_hyper}) as well as its Berry-Esseen bound, derived in Corollary \ref{cor:BE_equi_lin}, and asymptotic properties, derived in Corollary \ref{cor:CLT_equi_lin}. 
\end{remark}

However, if we settle for a more attainable goal, we can still construct a $r$-equireplicate design in some settings, even when $k>2$, via a partial equireplicate partition. 

\begin{definition}\label{def:partial_equi_part}
[Partial Equireplicate Partition]
Let $n$ and $r$ be given. A collection $G_1,\ldots, G_{q}$ of subsets of $\mathcal{B}_k$ is a \emph{partial} $r$-\emph{equireplicate partition} if $\bigcup_{g=1}^{q} G_g \subset \mathcal{B}_k$, the subsets are mutually disjoint and each subset $G_g$ is an $r$-equireplicate design. 
\end{definition}

In the next theorem, we show that we can construct a partial $k$-equireplicate partition under some conditions on $n$ and $k$, and for any strictly increasing natural number valued sequence of choice. We denote with $\mathcal{C}_n = \{ a \in \mathbb{Z}_n| \; \text{gcd}(a,n) = 1 \}$ the set of coprimes of $n$ and with $\phi(n) = \mid \mathcal{C}_n \mid $ its cardinality, which is known as \emph{Euler's totient function}.

\begin{theorem}
    [Partial $k$-Equireplicate Partition]
    \label{thm:partialPartition}
    Let $k > 2$ and $\eta: \{0, \ldots, k-1\} \to \mathbb{N}_0$ be any strictly increasing natural number valued sequence. Take $n$ to be a positive integer such that $n > 3\;\eta(k-1)\;\{\eta(k-1) - \eta(0)\}$. Then there exists a partial $k$-equireplicate partition of $\mathcal{B}_k$. One such construction is a collection of $\phi(n)$ subsets $G_1,\ldots, G_{q}$, where subset $G_g$ for $g \in \mathcal{C}_n$ is defined as:

\begin{equation}
G_{g}=\bigl\{
     \,\{\,i+g\bigl[\eta(j)- \eta(0)\bigr]\pmod{n}
        \; | \;  j\in \{0, \ldots, k-1\}\}
     \;| \; i\in\mathbb Z_{n}
\bigr\}. \label{def:cyc_des}
\end{equation}

\end{theorem}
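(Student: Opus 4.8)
The plan is to recognize that each subset $G_g$ is nothing but the set of all $n$ cyclic translates of a single fixed pattern, so that the three clauses of Definition \ref{def:partial_equi_part}---namely that each $G_g$ is $k$-equireplicate, that the $G_g$ are mutually disjoint, and that their union is a proper subset of $\mathcal B_k$---all become statements about translates of $k$-subsets of $\mathbb Z_n$. Writing $\delta_j=\eta(j)-\eta(0)$, so that $0=\delta_0<\delta_1<\cdots<\delta_{k-1}=M$ with $M:=\eta(k-1)-\eta(0)<n$, and setting $P_g=\{\,g\delta_j \bmod n : j=0,\dots,k-1\,\}$, the definition in \eqref{def:cyc_des} reads $G_g=\{\,i+P_g : i\in\mathbb Z_n\,\}$. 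First I would check that each block genuinely has $k$ elements: $i+g\delta_j\equiv i+g\delta_{j'}\pmod n$ forces $g(\delta_j-\delta_{j'})\equiv 0$, and since $\gcd(g,n)=1$ and $0<|\delta_j-\delta_{j'}|\le M<n$, this is impossible for $j\neq j'$, so $P_g$ and every translate lie in $\mathcal B_k$.

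For the equireplicate property, fix $g$ and observe that an index $v$ lies in the block $i+P_g$ exactly when $i\in v-P_g=\{\,v-g\delta_j\,\}$, which is a set of precisely $k$ distinct base points (same coprimality argument). Hence, \emph{provided} the $n$ translates $i+P_g$ are pairwise distinct, $|G_g|=n$ and each index occurs in exactly $k$ blocks, so $G_g$ is $k$-equireplicate, consistent with \eqref{eq:equirep} since $|G_g|\,k=n\,k$. The required within-$G_g$ distinctness is short: if $i+P_g=i'+P_g$ with $i\neq i'$, then $P_g$ is invariant under a nonzero translation, and multiplying by $g^{-1}$ transfers this to invariance of $\{\delta_0,\dots,\delta_{k-1}\}\subseteq\{0,\dots,M\}$ under some nonzero $s$; from $\delta_0=0$ invariance forces $1\le s\le M$, and then from $\delta_{k-1}=M$ it forces $M+s\pmod n\in\{\delta_j\}$, which is impossible because $M<M+s\le 2M<n$ (using $n>2M$) means $M+s>M=\max_j\delta_j$ with no wraparound.

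The main obstacle is the mutual disjointness of distinct $G_g$, which is equivalent to showing that for $g\neq g'$ in $\mathcal C_n$ the pattern $P_g$ is never a translate of $P_{g'}$, and this is exactly where the hypothesis $n>3\,\eta(k-1)\,M$ is used. The route I would take is: from an assumed identity $P_g=t+P_{g'}\pmod n$ extract a permutation $\sigma$ with $g\delta_j\equiv t+g'\delta_{\sigma(j)}\pmod n$; use $\delta_0=0$ to solve $t\equiv-g'\delta_{\sigma(0)}$; form pairwise differences to remove $t$, giving $g(\delta_j-\delta_{j'})\equiv g'(\delta_{\sigma(j)}-\delta_{\sigma(j')})\pmod n$; and finally invoke the size bound to promote the decisive congruences to genuine integer identities, so that the ordered gap structures of $\{g\delta_j\}$ and $\{g'\delta_j\}$ can be compared and made to agree only when $g=g'$. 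I expect this step to be delicate, since $g,g'$ may be as large as $n-1$ and so $g\delta_j$ wraps around the circle many times, ruling out any naive lift to $\mathbb Z$; the factor $3\,\eta(k-1)\,M$ is precisely what confines the relevant differences to a window short enough that the mod-$n$ equalities are unambiguous.

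Once the previous two steps guarantee that all blocks are distinct both within and across the $G_g$, the union contains exactly $\phi(n)\,n$ blocks. Since $n>3\,\eta(k-1)\,M\ge 3(k-1)^2$ keeps $k$ small relative to $n$ (so $3\le k\le n/2$), a crude count gives $\phi(n)\,n<n^2<\binom{n}{3}\le\binom{n}{k}$, whence $\bigcup_{g}G_g\subsetneq\mathcal B_k$. Assembling the three parts verifies every clause of Definition \ref{def:partial_equi_part} and establishes that the subsets $G_g$, $g\in\mathcal C_n$, form a partial $k$-equireplicate partition.
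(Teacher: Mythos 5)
Your treatment of the block size, the within-$G_g$ distinctness (via translation-invariance of $\{\delta_0,\dots,\delta_{k-1}\}$ and the bound $2M<n$), the replication count through $v-P_g$, and the strict-inclusion count are all correct, and the first two are arguably cleaner than the paper's, which folds everything into a single injectivity claim for the map $(i,g)\mapsto i+P_g$. The problem is the heart of the theorem: disjointness of $G_g$ and $G_{g'}$ for $g\neq g'$, which you leave as a sketch whose stated plan does not work. After eliminating $t$ you reach $g(\delta_j-\delta_{j'})\equiv g'(\delta_{\sigma(j)}-\delta_{\sigma(j')})\pmod n$ and propose to ``invoke the size bound to promote the decisive congruences to genuine integer identities.'' But these are precisely the congruences against which the hypothesis $n>3\,\eta(k-1)\,\{\eta(k-1)-\eta(0)\}$ is powerless: $g(\delta_j-\delta_{j'})$ can be as large as $(n-1)M$, so both sides wrap around $\mathbb{Z}_n$ many times --- exactly the obstruction you flag yourself and then do not remove. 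The size bound cannot ``confine the relevant differences to a short window'' while $g$ and $g'$ are still present in the expression.

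The missing idea is to eliminate \emph{both} $g$ and $g'$ before applying the size bound. The paper does this by taking a third index $l$, forming the two difference congruences $g(\delta_j-\delta_i)\equiv g'(\delta_{\sigma(j)}-\delta_{\sigma(i)})$ and $g(\delta_j-\delta_l)\equiv g'(\delta_{\sigma(j)}-\delta_{\sigma(l)})$, cross-multiplying to cancel $g$, and then dividing by $g'$ (legitimate since $\gcd(g',n)=1$) to obtain
\begin{equation*}
[\eta(\sigma(j))-\eta(\sigma(i))]\,[\eta(j)-\eta(l)]-[\eta(\sigma(j))-\eta(\sigma(l))]\,[\eta(j)-\eta(i)]\equiv 0 \pmod{n},
\end{equation*}
a quantity depending only on $\eta$ whose absolute value is bounded by $3\,\eta(k-1)\,\{\eta(k-1)-\eta(0)\}<n$. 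Only at this point does the congruence become an integer identity, which by strict monotonicity of $\eta$ forces $\sigma$ to fix every triple of indices and hence be the identity; one must then still descend from $\sigma=\mathrm{id}$ to $g=g'$ and $t=0$, a step you assert (``made to agree only when $g=g'$'') but do not supply. Without this determinant-style cancellation your route stalls exactly at the step you yourself identify as delicate, so the disjointness claim --- and with it the theorem --- is not established by the proposal as written.
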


\subsubsection{Proof of Theorem \ref{thm:partialPartition}}

\begin{proof}
   The construction of the partial partition relies on the map $f: \mathbb{Z}_n \times \mathcal{C}_n \to D \subset \mathcal{B}_k$ which inputs a couple $(i,g)$ and outputs $\{i+g\bigl[\eta(0)- \eta(0)\bigr], \ldots, i+g\bigl[\eta(k-1)-\eta(0)\bigr]\}$, which is an element of $\mathcal{B}_k$. To avoid potential confusions, we will refer to a given element of $G_g$ as a block. We now establish the following claims on the previously defined map:

   \begin{enumerate}[label=(\alph*)]
       \item \emph{The map $f$ assigns to each couple $(i, g)$ an element of $\mathcal{B}_k$}.

       First of all, consider $(i,g) = (0,1)$, which for sure belongs to $\mathbb{Z}_n \times \mathcal{C}_n$. Since $n > 3\;\eta(k-1)\;[\eta(k-1) - \eta(0)] > \eta(k-1)$ by assumption, the first block of $G_{1}$, generated by $(i,g) = (0,1)$, contains $k$ distinct elements. This also implies that all blocks of $G_{1}$, generated by $(i,1)$, contain $k$ distinct elements because shifting by the same $i \in \mathbb{Z}_n \pmod{n}$ each element of a block of size $k$, preserve the original distinction within the first block of $G_1$. For a generic $G_g$, consider that the first block of each $G_g$, generated by $(0,g)$, has $k$ distinct element since $g \in \mathcal{C}_n$ just permutes the elements of the first block of $G_1$ which are distinct, under $n > \eta(k-1)$. To conclude the proof of this part, we just need to verify that for each $g \in \mathcal{C}_n$, all the blocks of $G_g$ contain $k$ distinct elements. We have already shown that the first block of each $G_g$ contains $k$ distinct elements, but then all other blocks, generated by $(i,g)$ shifting by $i \in \mathbb{Z}_n \pmod{n}$, will contain distinct elements for the same reasoning outlined at the beginning for $G_1$.

       \item \emph{The map $f$ is injective and the subsets forming the partition are mutually disjoint}.

       We prove the injectivity of $f$ by contradiction. Suppose $f$ is not injective, then there exist distinct $(i_1,g_1)$ and $(i_2,g_2)$, meaning that $i_1 = i_2$ and $g_1 = g_2$ cannot occur at the same time, such that $f(i_1,g_1) = f(i_2,g_2)$. We start by introducing a permutation $\sigma:\{0, \ldots, k-1\} \to \{0, \ldots, k-1\}$, defined as a bijection from the set $\{0, \ldots, k-1\}$ onto itself. If $f(i_1,g_1) = f(i_2,g_2)$, then there must exist at least two distinct permutations $\sigma$, since one is the identity that maps an index to itself, such that for all $j \in \{0, \ldots k-1\}$:

\begin{equation}
 i_1 + g_1 [\eta(j) - \eta(0)] \equiv i_2 + g_2 [\eta(\sigma(j)) - \eta(0)] \pmod{n} 
\;.\label{eq:set_perm}
    \end{equation}

The key argument behind the proof, is to notice that if two blocks are equal in our $D$, then all possible subsets of these two blocks of any given size must be equal as well. Thus, we consider the three distinct indices $j,i, l \in \{0,,\ldots,k-1\}$ and solve the related system of $\binom{k}{3}$ congruences:
 
\[
\left\{
\begin{aligned}
& i_1 + g_1 [\eta(j) - \eta(0)] \equiv i_2 + g_2 [\eta(\sigma(j)) - \eta(0)]  \pmod{n} \\
& i_1 + g_1 [\eta(i) - \eta(0)] \equiv i_2 + g_2 [\eta(\sigma(i)) - \eta(0)] \pmod{n}\\
& i_1 + g_1 [\eta(l) - \eta(0)] \equiv i_2 + g_2 [\eta(\sigma(l)) - \eta(0)] \pmod{n}
\end{aligned}
\right.
\]

\[
\left\{
\begin{aligned}
& i_1 - i_2 \equiv g_2 [\eta(\sigma(j)) - \eta(0)] - g_1 [\eta(j) - \eta(0)]  \pmod{n} \\
& i_1 - i_2 \equiv g_2 [\eta(\sigma(i)) - \eta(0)] - g_1 [\eta(i) - \eta(0)] \pmod{n}\\
& i_1 - i_2 \equiv g_2 [\eta(\sigma(l)) - \eta(0)] - g_1 [\eta(l) - \eta(0)] \pmod{n}
\end{aligned}
\right.
\]

Now we take the collections of congruences indexed first by $i$ and then by $l$ and subtract them from the collection indexed by $j$ obtaining

\[
\left\{
\begin{aligned}
& g_2 [\eta(\sigma(j)) - \eta(\sigma(i))] - g_1 [\eta(j) - \eta(i)]  \equiv 0  \pmod{n} \\
& g_2 [\eta(\sigma(j)) - \eta(\sigma(l))] - g_1 [\eta(j) - \eta(l)]  \equiv 0 \pmod{n}
\end{aligned}
\right.
\]

We now multiply both collections of congruences to match the $g_1$ terms and subtract them to finally obtain
$$
g_2\{ [\eta(\sigma(j)) - \eta(\sigma(i))] [\eta(j) - \eta(l)]  - [\eta(\sigma(j)) - \eta(\sigma(l))] [\eta(j) - \eta(i)] \} \equiv 0 
 \pmod{n}.
$$

Since $g_2 \in \mathcal{C}_n$, we can divide both sides of the congruence by $g_2$ and obtain
\small{
\begin{equation}
[\eta(\sigma(j)) - \eta(\sigma(i))] [\eta(j) - \eta(l)]  - [\eta(\sigma(j)) - \eta(\sigma(l))] [\eta(j) - \eta(i)]  \equiv 0  \pmod{n}.    \label{eq:inj_equiv}
\end{equation}
}
\normalsize Now we show that that the absolute value of (\ref{eq:inj_equiv}) is bounded above by the quantity $3\;\eta(k-1)\;[\eta(k-1) - \eta(0)]$. We start by noticing that the LHS of (\ref{eq:inj_equiv}) can be developed to obtain

\begin{equation*}
\text{LHS} = \eta(\sigma(i))\left[\eta(l)-\eta(j)\right]+\eta(\sigma(j))\left[\eta(i)- \eta(l)\right]+\eta(\sigma(l))\left[\eta(j)-\eta(i)\right] .
\end{equation*}

Since the sequence is strictly increasing, we know that for any $u,v \in \{0,\ldots, k-1\}$, $\left| \eta(u)-\eta(v)\right| \leq [\eta(k-1) - \eta(0)]$. But then, for the triangle inequality and Cauchy-Schwarz
\small{
$$
\begin{aligned}
& \left| \text{LHS} \right| & < & \left|\eta(\sigma(i))\right|\left|\eta(l)-\eta(j)\right|+\left|\eta(\sigma(j))\right|\left|\eta(i)-\eta(l)\right|+\left|\eta(\sigma(l))\right|\left|\eta(j)-\eta(i)\right|  \\
&  & < &  \;\eta(k-1) [\eta(k-1) - \eta(0)] + \eta(k-1) [\eta(k-1) - \eta(0)] + \eta(k-1) [\eta(k-1) - \eta(0)]\\
&  & = & \;3\;\eta(k-1)\;[\eta(k-1) - \eta(0)].
\end{aligned}
$$
}
\normalsize Since $ n > 3\;\eta(k-1)\;[\eta(k-1) - \eta(0)]$ by assumption, this implies that all the possible solutions of (\ref{eq:inj_equiv}) must verify
\begin{equation*}
[\eta(\sigma(j)) - \eta(\sigma(i))] \;[\eta(j) - \eta(l)]  = [\eta(\sigma(j)) - \eta(\sigma(l))] \;[\eta(j) - \eta(i)]  .
\end{equation*}

However, the only possible solution is when $j=\sigma(j)$, $i=\sigma(i)$ and $l=\sigma(l)$. This because both the three indices $j$,$i$ and $l$ and their permutations are distinct among themselves, and the sequence is strictly increasing. But this contradicts our original statement on the existence of at least two distinct permutations that guarantee (\ref{eq:set_perm}). Thus the map $f$ is injective. This results implies \emph{a fortiori} that $G_a \cap G_b = \varnothing$ for all distinct $a,b \in \mathcal{C}_n$ meaning that the subsets forming the partition must be mutually disjoint.

       \item \emph{The set of images $D \subset \mathcal{B}_k$ thus $f$ is not surjective}.

       This is a requirement of definition \ref{def:partial_equi_part} to obtain a partial equireplicate partition. Consider that $|\mathcal{B}_k| = \binom{n}{k}$. But since the map is injective under our assumptions, we know that $ |D| = n \; \phi(n)$, which achieves its maximum when $n$ is prime. Thus, if we have that $n \; (n-1)  < \binom{n}{k}$ this ensures that $D = \bigcup_{g=1}^{q} G_g \subset \mathcal{B}_k$. Under our assumptions that $k > 2$ and $ n > 3\;\eta(k-1)\;[\eta(k-1) - \eta(0)]$, $n \; (n-1)  < \binom{n}{k}$ thus $f$ is not surjective.

       \item \emph{Each $G_g$ for $g \in \mathcal{C}_n$ is a $k$-equireplicate design}.

       This last point is easy to show since it comes automatically when using cyclic constructions (see \cite{lee1990u}, starting from page 198, for a detailed explanation). Indeed, we have that $G_1 = \{i+\bigl[\eta(0)- \eta(0)\bigr], \ldots, i+\bigl[\eta(k-1)- \eta(0)\bigr]\}$ and since we have shown that all $n$ elements are distinct, and we know that $i \in \mathbb{Z}_n$, then each element of $\mathbb{Z}_n$ will appear exactly $k$ times. The same holds as well for a generic $G_g$ because we have shown that also all his elements are distinct at the start.
   \end{enumerate}

Since we have verified all our claims, then our proposed construction is a partial $k$-equireplicate partition.

\end{proof}

\subsubsection{Proof of Theorem \ref{theo:algo_clt_equi_k>2}}

\begin{proof}
 For a given value of $g \in \mathcal{C}_{n,r}$, Algorithm \ref{alg:k>2Design} builds the subset $G_g$, as defined in Theorem \ref{thm:partialPartition} where each $G_g$ is a $k$-equireplicate design. As already underlined for the $k=2$ case, see the proof of Theorem \ref{theo:algo_clt_equi}, if $G_1,\ldots, G_{q}$ is a partial $t$-equireplicate partition, then the union of any $s$ of the subsets yields a $st$-equireplicate design. In our case, the output of Algorithm \ref{alg:k>2Design} is the union of the first $r/k$ subsets of the partial $k$-equireplicate partition constructed in Theorem \ref{thm:partialPartition}. Thus, by the previous reasoning with $t=k$ and $s=r/k$, it follows that the output of Algorithm \ref{alg:k>2Design} is an $r$-equireplicate design. 
 Regarding the computational complexity, we start by noticing that building $\mathcal{C}_{n,r} = \{a \in \{1,\ldots, r/k \}| \; \text{gcd}(a,n) = 1 \}$ requires $O(r/k \log(n))$ using the standard Euclidean algorithm. Then, there are three nested for-loops with total computational complexity $O(nr)$, since the first loop concerns $r/k$ iterations, the second $n$ and the third $k$. Therefore, asymptotically Algorithm \ref{alg:k>2Design} runs in $O(nr)$. 
\end{proof}

\section{Numerical Experiments}\label{supp:sec5}

\begin{figure}[H]
    \centering
    \includegraphics[width=\linewidth]{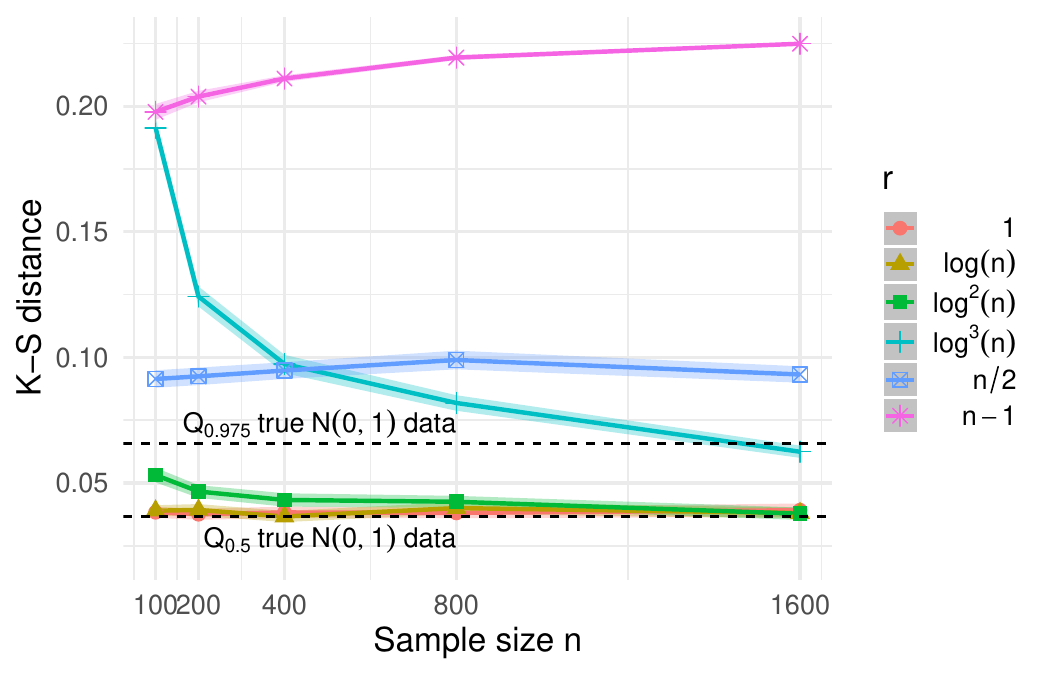} 
    \caption{$95\%$ Monte Carlo CI for the KS distance between the empirical distribution of the incomplete uMMD statistics under $H_0$, standardized by $s^2_2$ (see Proposition \ref{prop:sig_k}), and the $\mathcal{N}(0,1)$ distribution.}
    \label{fig:MMD_simstudy_s2k}
\end{figure}

\end{document}